\newtheorem{thm}{Theorem}[section] 
\newtheorem*{claim*}{Claim}
\newtheorem{cor}[thm]{Corollary}
\newtheorem{defn}[thm]{Definition}
\newtheorem{lem}[thm]{Lemma}
\newtheorem{prop}[thm]{Proposition}
\newtheorem{rem}[thm]{Remark}
\newtheorem{ques}[thm]{Question}
\newtheorem{obs}[thm]{Observation}
\newtheorem{thmx}{Theorem}
\def\Qp{\mathbb Q_p}
\def\Zp{\mathbb{Z}_p}
\def\Ind{\mathrm{Ind}}
\def\ind{\mathrm{ind}}
\def\End{\mathrm{End}}
\def\Ext{\mathrm{Ext}}
\def\Hom{\mathrm{Hom}}
\def\complex{\mathbb{C}}
\def\reals{\mathbb{R}}
\def\nats{\mathbb{N}}
\def\ints{\mathbb{Z}}
\def\rats{\mathbb{Q}}
\def\supp{\mathrm{supp}}
\def\Dist{\mathrm{Dist}}
\def\repg{\mathrm{Rep}(G)}
\def\H{\mathcal{H}}
\newcommand\angles[1]{\langle #1 \rangle}
\newcommand\curly[1]{\{ #1 \}}
\newcommand\Irr[1]{\mathrm{Irr}{( #1 )}}
\begin{document}
\author{Yotam I. Hendel}
\title{On twisted Gelfand pairs through commutativity of a Hecke algebra}
\maketitle
\begin{abstract}
For a locally compact, totally disconnected group $G$, a subgroup $H$ and a character $\chi:H \to \mathbb{C}^{\times}$
we define a Hecke algebra $\H_\chi$ and explore the connection between commutativity of $\H_\chi$ and the $\chi$-Gelfand property of $(G,H)$, i.e. the property $\dim_{\mathbb{C}} (\rho^*)^{(H,\chi^{-1})} \leq 1$ for every $\rho \in \mathrm{Irr}(G)$, 
the irreducible representations of $G$. 

We show that the conditions of the Gelfand-Kazhdan criterion imply commutativity of $\H_\chi$, and 
verify in several simple cases that commutativity of $\H_\chi$ is equivalent to the $\chi$-Gelfand property of $(G,H)$.

We then show that if $G$ is  a connected reductive group over a $p$-adic field $F$, 
and $G/H$ is $F$-spherical, then the cuspidal part of $\H_\chi$ is commutative if and only if  $(G,H)$ satisfies the $\chi$-Gelfand property with respect to all cuspidal representations ${\rho \in \mathrm{Irr}(G)}$.

We conclude by showing that if $(G,H)$ satisfies the $\chi$-Gelfand property 
with respect to all irreducible $(H\backslash G,\chi^{-1})$-tempered representations of $G$ then $\H_\chi$ is commutative.
\end{abstract}
\tableofcontents

\section{Introduction}
\subsection{Motivation}
\subsubsection{Gelfand pairs of finite groups} 
Let  $G$ be a finite group, and let $\Irr{G}$ denote the set of its irreducible representations.
It is a fundamental result that the group algebra $\mathbb {C}[G]$ is semi-simple, and that each 
$\rho \in \Irr{G}$
appears $\dim_\complex \rho$ times in its direct sum decomposition.
 
A question that comes to mind is what happens in the relative situation, 
i.e. given a subgroup $H \leq G$, how does the representation of $H$-invariant functions $\complex[G]^H\simeq \complex[H \backslash G]$ decompose.
Here, not all irreducible representations of $G$ appear in $\complex[H \backslash G]$, and those that do are called {\it $H$-distinguished}. 
\begin{defn}
We say that $(G,H)$ is a {\it Gelfand pair}
if every $ \rho \in \Irr{G}$ appears in $\complex[H \backslash G]$ at most once, i.e. $\dim_\complex\Hom_G(\rho,\complex[H \backslash G]) \leq 1$.
\end{defn}
Note that by Frobenius reciprocity, this is equivalent to demanding that every $\rho \in \Irr{G}$ 
has at most one $H$-invariant linear functional up to a scalar ($\dim_\complex (\rho^*)^{H}=\dim_\complex\Hom_H(\rho|_H,\complex) \leq 1$).

The notion of Gelfand pairs has found many uses throughout mathematics, and is used, for example, to study the representation theory of the symmetric group (e.g. \cite{OV96})
and when investigating the convergence rates of random walks on finite groups (e.g. \cite[Chapter 3F]{Di88},\cite{Le82}).

The Gelfand property can also be viewed as a relative analogue of Schur's lemma;
indeed, in the group case, namely $\Delta G \leq G \times G$ where $\Delta G$ is a diagonal copy of $G$ embedded in $G \times G$, the Gelfand property of the pair $(G \times G ,\Delta G)$ is a restatement of Schur's lemma.

In principle, in order to verify the Gelfand property of a pair $(G,H)$, one has to calculate the dimension of the space of $H$-invariant functionals on every 
$\rho \in \mathrm{Irr}(G)$,
and show that in each case it is at most $1$.
This is a hard task even in the case of finite groups, 
but conveniently
the Gelfand property can be formulated in terms of commutativity of the convolution algebra of bi-$H$-invariant functions on $G$, which in the case of finite groups is isomorphic to $\End_G(\complex[H \backslash G])$.
This simple observation is key to the work presented in this paper:
\begin{obs} \label{obs:HeckeCom}
Let $H\leq G$ be finite groups. 
 $(G,H)$ is a Gelfand pair $\iff$ the Hecke algebra $\mathbb {C}[ G ]^{H\times H}$ 
  is commutative.
\end{obs}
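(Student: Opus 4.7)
The plan is to reduce both sides of the equivalence to the same numerical condition on multiplicities. Write $m_\rho = \dim_\complex \Hom_G(\rho, \complex[H \backslash G])$ for each $\rho \in \Irr{G}$. The Gelfand property is, by definition, the assertion that $m_\rho \leq 1$ for every $\rho$. My goal is to show that commutativity of the Hecke algebra is equivalent to the same numerical bound.

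The first step is to unwind the stated isomorphism $\complex[G]^{H \times H} \simeq \End_G(\complex[H\backslash G])$. The bi-$H$-invariant functions on $G$ are the same as $H$-invariant functions on the right coset space $H \backslash G$, and given such $f$ one defines the operator on $\complex[H \backslash G]$ by convolution on the right; a direct verification shows this operator is $G$-equivariant with respect to the right regular action, that convolution corresponds to composition, and that the map is bijective. This reduces the problem to showing that $\End_G(\complex[H\backslash G])$ is commutative iff $m_\rho \leq 1$ for all $\rho$.

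The second step is to decompose $\complex[H \backslash G]$ into isotypic components. Since $G$ is finite, Maschke's theorem gives complete reducibility, so $\complex[H\backslash G] \simeq \bigoplus_{\rho \in \Irr{G}} \rho^{\oplus m_\rho}$. Applying Schur's lemma componentwise yields
\[
\End_G(\complex[H \backslash G]) \simeq \bigoplus_{\rho \in \Irr{G}} M_{m_\rho}(\complex),
\]
and this product of matrix algebras is commutative if and only if each $m_\rho \leq 1$, completing the equivalence.

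The main point requiring care, rather than an actual obstacle, is the verification that convolution by a bi-$H$-invariant function implements a $G$-equivariant endomorphism of $\complex[H \backslash G]$ in a way compatible with products; everything else is a direct application of Maschke plus Schur. This framing is also exactly the one we will need to generalize later to the locally compact, totally disconnected setting, where semisimplicity will have to be replaced by a finer decomposition into admissible pieces.
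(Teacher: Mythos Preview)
Your argument is correct and is the standard proof of this classical fact. The paper does not actually supply a proof of this observation; it is stated in the motivational introduction as a well-known fact, so there is nothing to compare against beyond noting that your Maschke-plus-Schur computation is exactly the expected justification.
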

Recall that an anti-involution $\sigma: G \to G$ is a map such that  $\sigma(g_1 g_2)=\sigma(g_2)\sigma(g_1) $ for all $g_1,g_2 \in G$ and $\sigma^2 = \mathrm{Id}$.
In light of Observation \ref{obs:HeckeCom}, 
we can show that a pair has the Gelfand property using only the relative group structure of $G$ with respect to $H$, without knowing anything about the irreducible representations of $G$, or the decomposition of $\complex[H \backslash G]$:
\begin{prop} [Gelfand's trick] \label{prop:Gel}
Assume there exists an anti-involution $\sigma$ of $G$ 
such that $\sigma(H)=H$ and  $H\sigma(g)H=HgH$ for all $g \in G$, then $(G,H)$ is a Gelfand pair.
\end{prop}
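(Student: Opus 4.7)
The plan is to leverage Observation \ref{obs:HeckeCom}: it suffices to show that the Hecke algebra $\complex[G]^{H \times H}$ of bi-$H$-invariant functions, equipped with convolution, is commutative. The anti-involution $\sigma$ on $G$ should induce a linear map $\tilde\sigma$ on $\complex[G]$ that (i) is an algebra anti-involution with respect to convolution, (ii) preserves $\complex[G]^{H \times H}$, and (iii) acts as the identity on $\complex[G]^{H \times H}$. Granting these three properties, for any $f_1,f_2 \in \complex[G]^{H\times H}$ one has
\[
f_1 * f_2 \;=\; \tilde\sigma(f_1 * f_2) \;=\; \tilde\sigma(f_2) * \tilde\sigma(f_1) \;=\; f_2 * f_1,
\]
which is exactly the desired commutativity.

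First I would define $\tilde\sigma(f)(g) := f(\sigma(g))$; since $\sigma^2 = \mathrm{Id}$, clearly $\tilde\sigma^2 = \mathrm{Id}$. To check the anti-involution property, I would expand
\[
\tilde\sigma(f_1 * f_2)(g) = \sum_{h \in G} f_1(h) f_2(h^{-1}\sigma(g)),
\]
perform the substitution $h = \sigma(g) j^{-1}$ (equivalently $j = h^{-1}\sigma(g)$), and recognize the result as $\sum_j f_1(\sigma(g) j^{-1}) f_2(j)$. Comparing this with the direct expansion of $(\tilde\sigma(f_2) * \tilde\sigma(f_1))(g)$, where I would use the identity $\sigma(k^{-1}g) = \sigma(g)\sigma(k)^{-1}$ coming from the anti-multiplicativity of $\sigma$, and then change variables via $j = \sigma(k)$, confirms that the two expressions agree.

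Next I would verify (ii): if $f$ is bi-$H$-invariant, then for $h_1,h_2 \in H$ we have $\tilde\sigma(f)(h_1 g h_2) = f(\sigma(h_2)\sigma(g)\sigma(h_1))$, and since $\sigma(H)=H$, this equals $f(\sigma(g)) = \tilde\sigma(f)(g)$. Finally, for (iii), observe that any $f \in \complex[G]^{H \times H}$ is constant on the double cosets $HgH$; the hypothesis $H\sigma(g)H = HgH$ means $\sigma(g)$ lies in the same double coset as $g$, so $\tilde\sigma(f)(g) = f(\sigma(g)) = f(g)$.

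The main obstacle is the anti-involution identity $\tilde\sigma(f_1 * f_2) = \tilde\sigma(f_2) * \tilde\sigma(f_1)$; the other two steps are essentially immediate unpacking of the hypotheses on $\sigma$. That step is, however, a standard change-of-variables computation, so I expect no real difficulty. One subtle point worth noting is that this entire proof is purely structural and does not require any information about $\Irr(G)$ itself, which is precisely the appeal of Gelfand's trick and foreshadows the Gelfand--Kazhdan criterion used later in the paper.
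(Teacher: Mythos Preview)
Your proof is correct and is precisely the classical argument. The paper does not supply a proof of Proposition~\ref{prop:Gel}; it states the result as well known, merely signaling (via the phrase ``In light of Observation~\ref{obs:HeckeCom}'') that one should deduce it from commutativity of $\complex[G]^{H\times H}$, which is exactly the route you take.
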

\subsubsection{Non-compact subgroups and the Gelfand-Kazhdan criterion} \label{subsub:CompactSubgroup}
The definition of a Gelfand pair can be generalized naturally to the case where $G$ is a locally compact group
and $H\leq G$ is a compact subgroup, by 
demanding $\dim_\complex\Hom_H(\rho|_H,\complex) \leq 1$ for every $\rho \in \mathrm{Irr}(G)$, where we either consider smooth irreducible representations of $G$ or unitary irreducible ones.
In both these settings Observation \ref{obs:HeckeCom} and
Gelfand's trick hold (see \cite[Theorems 6.1.3 and 6.3.1]{vD09}, \cite[Section 2.10]{BZ}), 
where 
$\mathbb {C}[ G ]^{H\times H}$ is replaced by $C_c(G)^{H\times H}$,
the algebra of compactly supported, bi-$H$-invariant continuous 
{functions} on $G$.

Let $G$ be a locally compact, totally disconnected topological
group and let $H \leq G$ be a closed subgroup.
We consider its category of smooth representations $\repg$.
For simplicity, assume {both $G$ and $H$ are unimodular}.
We would like to generalize the Gelfand property to these settings.
While the straight-forward definition would be demanding 
$\dim_\complex \Hom_H(\rho|_H, \complex) \leq 1$
for
every $\rho \in \Irr{G}$ 
(GP1 as below), 
it is often much easier 
to prove {an (a-priori)} weaker statement (GP2 as below).
There are no known examples of pairs $(G,H)$ which satisfy GP2 but not GP1, and it is conjectured that these two conditions are equivalent in general.
\begin{defn} [In spirit of {\cite[Definition 2.2.1]{AGS08}}]
\label{def:GelRed}
Let $H\leq G$ be as above.
\begin{enumerate}
\item We say that $(G,H)$ satisfies GP1 if $\dim_\complex \Hom_H(\rho|_H,\complex) \leq 1$ for every $\rho \in \Irr{G}$.
\item We say that $(G,H)$ satisfies GP2 if $\dim_\complex \Hom_H(\rho|_H,\complex)\cdot \dim_\complex \Hom_H(\tilde\rho|_H,\complex) \leq 1$ for every 
$\rho \in \Irr{G}$ and its smooth dual $\tilde \rho$.
\end{enumerate}
\end{defn}

\begin{rem}
The concept of Gelfand pairs in the case where $H$ non-compact have proven to be particularly important when studying periods in the theory of automorphic forms, see \cite{GPSR97},\cite{Gr91} or \cite{Re08}.
\end{rem}

Let $C^{-\infty}(G)$ denote the space of generalized functions on $G$, 
i.e. the dual of the space of all locally constant, compactly supported measures on $G$.
The main mechanism used to show the Gelfand property (in the sense of GP2) in these settings is the following generalization 
of Gelfand's trick (see  \cite{GK75}, \cite[Lemma 4.2]{P90} or \cite[Proposition 4.2]{Gr91}).
\begin{thm} [Gelfand-Kazhdan criterion]\label{prop:GK} 
Assume there exists an anti-involution $\sigma:G\to G$ such that $\sigma(H)=H$ and $\sigma(\xi)=\xi$ for every generalized function $\xi \in C^{-\infty}(G)^{H \times H}$,
then $(G,H)$ is a Gelfand pair (in the sense of GP2).
\end{thm} 

When comparing the above to the case where $H$ is compact, we see that 
the space $C_c(G)^{H \times H}$ is
replaced by the space of invariant generalized functions, which is not an algebra.
Furthermore, it is not clear what is the analogue of Observation \ref{obs:HeckeCom}, if it exists.
Evidently, if such an algebra existed, validity of the conditions of the Gelfand-Kazhdan criterion would imply it is commutative.
We arrive at the following question.

\begin{ques} \label{ques:GKRes}
Can one define a Hecke algebra $\mathcal H$, analogous to  {$C_c(G)^{H \times H}$}, such that
\begin{enumerate}
\item The Gelfand-Kazhdan conditions imply commutativity of $\mathcal H$.
\item $\mathcal H$ is commutative if and only if  $(G,H)$ is a Gelfand pair.
\end{enumerate}
\end{ques}

\subsection{Summary of the main results}
Let $G$ be a locally compact, totally disconnected topological group, 
let $H \leq G$ be a closed subgroup, and let $\chi: H \to \complex^{\times}$ be a smooth character.
{For simplicity, assume $(G,H)$ is a unimodular pair}.
We say that $(G,H)$ is a {\it $\chi$-Gelfand pair} if GP1 holds where $\complex$ is replaced by $\complex_\chi$, i.e. 
$d_{H,\chi}(\rho):=\dim_\complex (\rho^*)^{(H,\chi^{-1})} \leq 1$ for all $\rho \in \mathrm{Irr}(G)$.

Let $\ind_H^G$ denote the compact induction functor. In this paper we define a Hecke algebra $\mathcal{H}_\chi:= \End_G (\ind_H^G \chi^{-1})$, which can conveniently be considered as an algebra of invariant distributions,
 and give a partial answer to Question \ref{ques:GKRes} as follows.
We first answer \ref{ques:GKRes}(1) affirmatively (set $\H:=\H_1$, where $1$ is the trivial character of $H$):
\begin{thmx}[See Theorem \ref{thm:HeckeCommutative} for a more general result]
Assume the Gelfand-Kazhdan conditions hold (Proposition \ref{prop:GK}), i.e. there exists an anti-involution $\sigma: G \to G$ such that 
$\sigma(H)=H$ and $\sigma(\xi)=\xi$ for every $\xi \in C^{-\infty}(G)^{H \times H}$, then $\H$ is commutative.
\end{thmx}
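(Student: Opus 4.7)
The plan is to realize $\mathcal{H}$ concretely as a convolution algebra, transfer the anti-involution $\sigma$ to an anti-involution $\sigma^*$ of this algebra, show via the Gelfand--Kazhdan hypothesis that $\sigma^*$ acts as the identity, and then deduce commutativity by Gelfand's trick applied at the Hecke-algebra level. First, by Frobenius reciprocity,
$$\mathcal{H} = \End_G(\ind_H^G \mathbf{1}) \cong (\ind_H^G \mathbf{1})^H,$$
which I identify with the space of bi-$H$-invariant smooth functions on $G$ that are compactly supported modulo $H$, equipped with convolution against the $G$-invariant measure on $H\backslash G$; such a measure exists because $(G,H)$ is unimodular.

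Next, I would define $\sigma^*\colon \mathcal{H} \to \mathcal{H}$ by $(\sigma^* f)(g):=f(\sigma(g))$. Bi-$H$-invariance is preserved since $\sigma(H)=H$, and compact support modulo $H$ is preserved since $\sigma$ is a homeomorphism. The key structural fact is that $\sigma^*$ is an \emph{anti}-homomorphism for convolution, i.e.\ $\sigma^*(f_1 * f_2) = \sigma^*(f_2) * \sigma^*(f_1)$. This follows from the anti-homomorphism identity $\sigma(g_1 g_2)=\sigma(g_2)\sigma(g_1)$, together with the observation that an anti-involution of a unimodular group automatically preserves Haar measure: its modular character is positive and its square equals the modular character of $\sigma^2=\mathrm{id}$.

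To show $\sigma^*$ acts as the identity, I would embed $\mathcal{H}$ into $C^{-\infty}(G)^{H \times H}$ by
$$f \mapsto \xi_f,\qquad \xi_f(\phi\, dg) := \int_G f\,\phi\, dg \quad (\phi \in C^\infty_c(G)).$$
The integrand is compactly supported and locally constant, so the integral makes sense; the map is injective because $f$ is smooth; $\xi_f$ is bi-$H$-invariant because $f$ is; and it intertwines $\sigma^*$ with the $\sigma$-action on $C^{-\infty}(G)$ thanks to Haar-measure preservation. Applying the hypothesis, every element of $C^{-\infty}(G)^{H\times H}$ is $\sigma$-fixed, so in particular $\sigma\cdot\xi_f = \xi_f$ for every $f\in\mathcal{H}$; injectivity then yields $\sigma^* f = f$.

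Putting the pieces together, for $f_1, f_2 \in \mathcal{H}$,
$$f_1 * f_2 = \sigma^*(f_1 * f_2) = \sigma^*(f_2) * \sigma^*(f_1) = f_2 * f_1,$$
proving commutativity of $\mathcal{H}$. The step I expect to require the most care is the anti-homomorphism property of $\sigma^*$ together with the $\sigma$-equivariance of the embedding into $C^{-\infty}(G)$ when $H$ is non-compact; both hinge on careful bookkeeping with the invariant measure on $H\backslash G$ and on Haar-measure preservation by $\sigma$, so one must verify that no modular characters sneak in along the way.
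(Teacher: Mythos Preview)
Your argument has a real gap at the first step. The Frobenius identity $\End_G(\ind_H^G \mathbf{1}) \cong (\ind_H^G \mathbf{1})^H$ you invoke is the assertion that $\ind_H^G$ is left adjoint to restriction, and this holds only when $H$ is \emph{open} in $G$ (cf.\ Lemma~\ref{lem:standardfacts}(4)). For a general closed subgroup --- precisely the regime where the Gelfand--Kazhdan criterion has content --- $\mathcal{H}$ is not a convolution algebra of smooth bi-$H$-invariant functions. Concretely, the identity endomorphism would have to correspond to the indicator of the basepoint $He\in H\backslash G$, but when $H$ is not open that point is not open and its indicator is not locally constant. (For an explicit failure take $G=\Zp$ and $H=\{0\}$: then $\End_G(C^\infty(\Zp))\cong\Dist(\Zp)$, strictly larger than $C^\infty(\Zp)$.) Consequently your later steps, which operate on such functions, do not see all of $\mathcal{H}$, and the Gelfand-trick conclusion does not follow.

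The paper avoids this by never asserting that $\mathcal{H}$ consists of functions. Instead it injects $\mathcal{H}_\chi$ into the space of $\Delta G$-invariant bilinear forms on $\ind_H^G(\chi^{-1}\delta)\otimes\ind_H^G(\chi\delta)$ (Lemma~\ref{lem:HeckeInj}), identifies that space with bi-equivariant distributions on $G$ (Lemma~\ref{lem:IsomSpaces}), and computes that the involution $\theta$ induced by $\sigma$ swaps the two tensor factors up to composition with $\mu$ (Proposition~\ref{prop:ThetaAction}). The hypothesis then forces each $i_\chi(\tau)$ to be $\theta$-fixed, and a direct manipulation of the pairing --- not an anti-homomorphism identity on a convolution algebra --- yields $i_\chi(\tau_1\circ\tau_2)=i_\chi(\tau_2\circ\tau_1)$, whence commutativity by injectivity of $i_\chi$. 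This is a genuine substitute for, rather than a repackaging of, the classical Gelfand trick you sketch.
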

We then address  \ref{ques:GKRes}(2) in several cases where $H \backslash G$ is especially well behaved:
\begin{thmx}[For exact conditions on $\chi$ see Corollary \ref{cor: GP1 for compact subgroup}, Proposition \ref{GP1 for G/H compact} and Proposition \ref{cor:HeckeCase}]~
\label{thm:geometric cases}
\begin{enumerate}
\item  If $H $ is either compact or co-compact, then $\H_\chi$ is commutative $\iff$ $(G,H)$  is a $\chi$-Gelfand pair.
\item If $H $ is open and commensurated in $G$ (i.e. $gHg^{-1}\cap H$ has finite index in $H$ for all $g \in G$),
then  $\H_\chi$ is commutative if and only if 
$d_{H,\chi}(\rho)\leq 1$ 
 for every $\rho \in \Irr{G}$  such that either $\rho^{H_S}\neq \curly{0}$
 for a finite set $S \subset G/H$ 
  where $H_S = H \bigcap\limits_{g \in S} g H g^{-1}$, or $\rho$ is admissible.
\end{enumerate}
\end{thmx}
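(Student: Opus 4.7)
The plan is, in each case, to relate commutativity of $\H_\chi = \End_G(\ind_H^G \chi^{-1})$ to the multiplicity of each $\rho \in \Irr{G}$ in $V := \ind_H^G \chi^{-1}$, via Frobenius reciprocity; the latter identifies these multiplicities (up to replacing $\chi$ by $\chi^{-1}$, harmless after quantifying over all $\rho \in \Irr{G}$ and using smooth duality) with $d_{H,\chi}(\rho)$.

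For part (1), compact $H$: every character of a compact group has image in the unit circle, so $\chi$ is automatically unitary. Together with unimodularity of $(G,H)$, $V$ inherits a $G$-invariant positive-definite Hermitian form from the natural inner product on sections over $H \backslash G$. Its Hilbert completion decomposes as a Hilbert direct sum of irreducible unitary isotypic components, and restriction to smooth vectors yields an algebraic decomposition $V \cong \bigoplus_{\rho} \rho^{\oplus m_\rho}$. Therefore $\H_\chi \cong \prod_\rho M_{m_\rho}(\complex)$, which is commutative iff every $m_\rho \leq 1$. Frobenius reciprocity, $\Hom_G(\rho,\Ind_H^G \chi^{-1}) \cong \Hom_H(\rho|_H,\chi^{-1})$, identifies $m_\rho$ with a quantity equivalent to $d_{H,\chi}(\rho)$, completing the equivalence. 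The co-compact case with $\chi$ unitary proceeds analogously: $\ind_H^G = \Ind_H^G$ since $H \backslash G$ is compact, and unitarity of $\chi$ together with the invariant measure on $H \backslash G$ supplies the required $G$-invariant inner product.

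For part (2), the unitary structure is no longer available. I would instead exploit openness of $H$ to realize $V$ explicitly as compactly-supported-mod-$H$ functions $f : G \to \complex$ satisfying $f(hg) = \chi^{-1}(h) f(g)$, and use the left-adjoint Frobenius reciprocity
$$
\Hom_G(V,\rho) \;\cong\; \rho^{(H,\chi^{-1})}.
$$
The algebra $\H_\chi$ then acts on each $\rho^{(H,\chi^{-1})}$ through this identification. A non-zero $(H,\chi^{-1})$-eigenvector in $\rho$ is fixed by some compact open subgroup of $G$; smoothness of $\chi$ together with commensurability of $H$ allow one to replace that subgroup by $H_S$ for a suitable finite $S \subset G/H$, so that $\rho^{H_S} \neq 0$. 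Hence only $\rho$ satisfying the stated condition are relevant to $\H_\chi$, and one wishes to conclude that $\H_\chi$ is commutative iff $\dim \rho^{(H,\chi^{-1})} \leq 1$ for every such $\rho$, which is the stated condition on $d_{H,\chi}(\rho)$.

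The principal obstacle, concentrated in part (2), is that without semisimplicity of $V$ the action map $\H_\chi \to \prod_\rho \End_\complex(\rho^{(H,\chi^{-1})})$ need not be injective, so commutativity does not reduce tautologically to bounds on these dimensions. I expect this gap to be bridged by a level-by-level argument enabled by the commensurability hypothesis: for each finite $S$ the subgroup $H_S$ is open of finite index in $H$, so the subspace $V^{H_S}$ together with the corresponding idempotented truncation of $\H_\chi$ is of finite type; commutativity at each level then assembles to the global statement, using that any element of $\H_\chi$, viewed as a $\chi^{-1}$-bi-equivariant function on $G$, has support in finitely many double cosets and so is detected at some finite level $H_S$.
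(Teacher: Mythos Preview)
Your co-compact argument in part~(1) is essentially the paper's: unitarizability plus admissibility of $\ind_H^G\chi^{-1}\delta$ (since $\ind=\Ind$ and $H\backslash G$ is compact) give semisimplicity, hence $\H_\chi$ is a product of matrix algebras.

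There is, however, a genuine gap in your treatment of the compact-$H$ case. When $H$ is compact but $G$ is not, the Hilbert completion $L^2(H\backslash G,\chi^{-1})$ does \emph{not} decompose as a Hilbert direct sum of irreducibles: the Plancherel decomposition is a direct \emph{integral}, and the smooth module $V=\ind_H^G\chi^{-1}$ is typically far from semisimple (take $H=\{e\}$, so $V=C_c^\infty(G)$). Consequently the isomorphism $\H_\chi\cong\prod_\rho M_{m_\rho}(\complex)$ fails, and your argument collapses. The paper proceeds asymmetrically: for one direction it embeds the classical Hecke algebra $A(G)^{(K,\chi)\times(K,\chi^{-1})}$ into $\H_\chi$, so commutativity of $\H_\chi$ implies commutativity of the classical algebra and hence the Gelfand property; the converse direction invokes Theorem~\ref{thm:GPimpliesCom}, which uses the direct-integral (Plancherel) decomposition of $L^2(H\backslash G,\chi^{-1}\delta)$ together with the fact that the embedding $\ind_H^G\chi^{-1}\delta\hookrightarrow L^2$ is pointwise defined, to produce an injection $\H_\chi\hookrightarrow\prod_\rho\End_\complex(\Hom_G(\ind_H^G\chi^{-1}\delta,\rho))$ over the $(H,\chi^{-1})$-tempered spectrum. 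This is precisely the ``injectivity of the action map'' you identify as the obstacle, and it requires real input.

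For part~(2) your level-by-level sketch does not go through as stated: the spaces $V^{H_S}$ are not finite-dimensional (they are indexed by $H\backslash G/H_S$, which is infinite in general), and the ``finite type'' claim is not justified. The paper instead passes to the Schlichting completion $(\hat G,\hat H)$ of the Hecke pair $(G,H)$: one has an equivalence of categories $\mathrm{Rep}_H(G)\cong\mathrm{Rep}(\hat G)$ under which $\ind_H^G\chi^{-1}$ corresponds to $\ind_{\hat H}^{\hat G}\hat\chi^{-1}$, whence $\H_\chi(G,H)\simeq\H_{\hat\chi}(\hat G,\hat H)$. Since $\hat H$ is compact open in $\hat G$, part~(2) reduces to the compact case already established. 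This completion step is the missing idea in your proposal.
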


In particular, we see that if $H$ is compact then $\H$ is commutative $\iff$ $C_c(G )^{H \times H}$
is commutative, so it is sensible to regard $\H$ as a generalization of $C_c(G)^{H \times H}$.

Furthermore, using Theorem \ref{thm:geometric cases}(2) we show that given a reductive algebraic group ${\bf G}$ satisfying the strong approximation property (see Theorem \ref{thm:strong approx}),  the pair of discrete groups $(G,H)=(\bf{G}(\rats),\bf{G}(\ints))$  satisfies $d_{H,1}(\rho)\leq 1$  for every smooth irreducible representation $\rho$ of $G$ such that $\rho^{H_S}\neq \curly{0}$ for some finite set $S \subset G/H$ and $H_S$ as above
(note $\bf{G}(\ints)$ is open and commensurated in $\bf{G}(\rats)$).
This is done by translating the Gelfand property of a classical pair to the settings of $(\bf{G}(\rats),\bf{G}(\ints))$ via the Schlichting completion (see Section \ref{sec:Hecke pairs}).

For the next result, 
assume $G={\bf G}(F)$ and $H={\bf H}(F)$,
where ${\bf G}$ is a connected reductive group, ${\bf H}\leq {\bf G}$ a Zariski closed subgroup,
and $F$ is a $p$-adic field.
 We also assume $(G,H)$ is an $F$-spherical pair (see Definition \ref{def:sphericalpair}).

It is conjectured that $F$-sphericity implies 
$d_{H,\psi}(\rho)=\dim_\complex\mathrm{Hom}_G(\ind_H^G \psi^{-1},\tilde{\rho} )< 
\infty $ for every admissible $\rho \in \repg$ and character $\psi$ of $H$ (this was verified in several cases - see \cite{Del10},\cite{SV17}).
In order to prove Theorem \ref{thm:MainTheorem}, it would be sufficient to assume $d_{H,\chi}(\rho)$ is finite for every irreducible cupsidal representation of $G$.

By analyzing $d_{H,\chi}(\rho)$ over the cuspidal blocks of $\repg$,
 and using \cite[Theorem 6.1]{AS} and results from \cite{AAG12} 
  we show the following:
\begin{thmx} [See Theorem \ref{thm:commHeckeImpliesCGP} {for a more general result}]
\label{thm:MainTheorem}
The cuspidal part of $\H_\chi$ is commutative $\iff$ $(G,H)$ is a cuspidal $\chi$-Gelfand pair, i.e. 
$d_{H,\chi}(\rho) \leq 1$
for every cuspidal $\rho \in \mathrm{Irr}(G)$.
\end{thmx}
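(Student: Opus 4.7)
The plan is to use the Bernstein decomposition of $\repg$ to split $\ind_H^G\chi^{-1}$ into blocks, focus on the cuspidal ones, and reduce commutativity to a matrix-algebra computation on each. Frobenius reciprocity (using the unimodular hypothesis) gives, for every smooth $G$-representation $V$,
\[
\Hom_G(\ind_H^G\chi^{-1},V) = V^{(H,\chi^{-1})},
\]
so for an irreducible admissible $\rho$ one has $\dim\Hom_G(\ind_H^G\chi^{-1},\rho) = d_{H,\chi}(\tilde\rho)$; when $\rho$ is cuspidal, this is finite by the standing assumption on $d_{H,\chi}$.

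Next, write $\repg = \prod_\mathfrak{s}\repg_\mathfrak{s}$, so $\ind_H^G\chi^{-1} = \bigoplus_\mathfrak{s} V_\mathfrak{s}$ and $\H_\chi \cong \prod_\mathfrak{s}\End_G(V_\mathfrak{s})$. The cuspidal part of $\H_\chi$ corresponds to the cuspidal inertial classes, namely $\mathfrak{s}=[G,\rho_0]$. On such a block, irreducible cuspidal representations are both projective and injective in $\repg$, hence $\repg_\mathfrak{s}$ is semisimple and
\[
V_\mathfrak{s} \cong \bigoplus_{\rho\in\Irr{G}\cap\repg_\mathfrak{s}}\rho^{\oplus d_{H,\chi}(\tilde\rho)}.
\]
Schur's lemma then yields $\End_G(V_\mathfrak{s})\cong \prod_\rho M_{d_{H,\chi}(\tilde\rho)}(\complex)$, a product of matrix algebras that is commutative precisely when $d_{H,\chi}(\tilde\rho)\leq 1$ for every cuspidal irreducible in the block. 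Since $\rho\mapsto\tilde\rho$ permutes cuspidal irreducibles (preserving cuspidal inertial classes), taking the product over all cuspidal blocks gives the claimed equivalence.

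The principal obstacle is making the decomposition above genuinely rigorous: one must verify that $V_\mathfrak{s}$ really is a direct sum of cuspidal irreducibles with the stated multiplicities, and that $\End_G(V_\mathfrak{s})$ is honestly the product of matrix algebras rather than some larger algebra (for instance one involving continuous parameters from the unramified-twist family inside the block). This is where the cited inputs enter: \cite[Theorem 6.1]{AS} and results from \cite{AAG12} supply the admissibility and finiteness information, specific to $F$-spherical pairs, that bridges the gap between the abstract Bernstein block decomposition and the explicit matrix-algebra product — in particular by controlling the set of $(H,\chi^{-1})$-distinguished cuspidal irreducibles within each cuspidal block.
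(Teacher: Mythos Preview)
Your argument has a genuine gap at the step ``cuspidal irreducibles are projective and injective in $\repg$, hence $\repg_{\mathfrak s}$ is semisimple.'' This is false whenever $G$ has non-compact center (equivalently $\frak{X}_G$ is not a point): irreducible cuspidals are projective only in the category of representations with a \emph{fixed} central character, not in $\repg$ itself. Already for $G=F^\times$ the cuspidal block of the trivial character is equivalent to $\complex[t,t^{-1}]\text{-mod}$, which is not semisimple; no character is projective, and your decomposition $V_{\mathfrak s}\cong\bigoplus_\rho\rho^{\oplus d_{H,\chi}(\tilde\rho)}$ fails. Consequently $\End_G(V_{\mathfrak s})$ is not a product of matrix algebras over $\complex$, and the ``continuous parameters from the unramified-twist family'' you flag in your last paragraph are not a technicality to be patched but the entire content of the problem.

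The paper's proof confronts this directly. Under the equivalence $\mathcal B_{[G,\rho]}\simeq R_\rho\text{-mod}$ the block component becomes the module $H_{\rho,M}$ (for $M=\ind_H^G\chi^{-1}\delta$) over a ring containing $B=\mathcal O(\frak X_G)$. Using the tower $Z_\rho\subseteq B\subseteq R_\rho$, the identification $B\otimes_{Z_\rho}R_\rho\simeq \mathrm{M}_n(B)$, faithfully flat base change, and Morita equivalence, commutativity of $\End_{R_\rho}(H_{\rho,M})$ is reduced to commutativity of $\End_B(H_{\rho,M}|_B)$. The role of \cite{AS} and \cite{AAG12} is then not to control \emph{which} cuspidals are distinguished, but to show that $H_{\rho,M}|_B$ is a finitely generated, \emph{locally free} sheaf on a smooth subvariety of $\frak X_G$. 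Its endomorphism algebra is therefore locally a matrix algebra over a localization of $B$, commutative iff the local rank is everywhere $\le 1$; and the fibre rank at $\psi\in\frak X_G$ is exactly $d_{H,\chi\delta}(\rho\otimes\psi)$.
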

\begin{rem}
Note that for a cuspidal representation $\rho\in \mathrm{Irr}(G)$, we have $d_{H,\chi}(\rho)=d_{H,\chi^{-1}}(\tilde{\rho})$, so the conditions GP1 and GP2 (as in Definition \ref{def:GelRed}) are equivalent when restricting to the class of irreducible cuspidal representations of $G$.
\end{rem}
The last result of this paper concerns the converse direction of \ref{ques:GKRes}(2):
\begin{thmx} [See Theorem \ref{thm:GPimpliesCom} {for a more general result}]
Let $G$ be a second countable group of type I, and let $\chi$ be a unitary character.
If 
$d_{H,\chi}(\rho) \leq 1$
for every  $(H\backslash G,\chi^{-1})$-tempered $\rho \in \mathrm{Irr}(G)$ (i.e. included in the support of the Plancherel measure of $L^2(H\backslash G,\chi^{-1})$), then $\H_\chi$ is commutative.
In particular, if $(G,H)$ is a $\chi$-Gelfand pair, then $\H_\chi$ is commutative.
\end{thmx}
\subsection{Structure of the paper}
The paper is structured as follows:
In Section 2  we review  facts from the representation theory of $p$-adic groups, the Gelfand-Kazhdan criterion, 
 the Bernstein decomposition and the direct integral decomposition of unitary representations.
In Section 3 we prove Theorem A.
In Section 4 we prove Theorem B.
In Section 5 we prove Theorem C.
In Section 6 we prove Theorem D.
In Appendix A we present a proof to a general version (i.e. twisted, non-unimodular) of the Gelfand-Kazhdan criterion.
\subsection{Related work}
{
The dimensions $d_{H,\chi}(\rho)$ (where $\rho$ ranges over $\Irr{G}$) and the Gelfand property have been studied vastly 
(e.g. \cite{Sha74}, \cite{GK75}, \cite{P90}, \cite{Hak03}, \cite{AGS08}, \cite{HM08}, \cite{Del10}, \cite{SZ11}, \cite{AG09}, \cite{AGRS10}, \cite{AAG12}, \cite{SV17})
where in almost all cases proving that a pair is a Gelfand pair (where $H\leq G$ is non-compact) passes through the Gelfand-Kazhdan criterion.

The Gelfand-Kazhdan criterion was first introduced in \cite{GK75}, where it was used to show uniqueness of Whittaker models in the case of $\mathrm{GL}_n$ over a non-Archimedean local field. }

In \cite{SZ11}, the autors give a general formulation of the Gelfand-Kazhdan criterion for real reductive Lie groups.

{In \cite{Hak03} the author shows that for a $p$-adic symmetric pair $(G,H,\theta)$,
it is enough to demand that $ZH\theta(g)H=ZHg^{-1}H$ for almost every 
 double coset (i.e. $\curly{ g\in G: ZH\theta(g)H\neq ZHg^{-1}H}$ has measure zero in $G$) in order for $(G,H)$ to be a cuspidal Gelfand pair. It will be interesting to see whether this condition implies commutativity of our $\H_\chi$.}

\subsection{Acknowledgments}
{I wish to thank {\bf Uri Bader}, {\bf Roman Bezrukavnikov}, {\bf Shachar Carmeli}, {\bf Max Gurevich}, {\bf Dmitry Gourevitch}, {\bf Erez Lapid}, {\bf Omer Offen}, {\bf Eitan Sayag} and {\bf Michael Schein}
 for various helpful discussions.
 Special thanks are due to {\bf Gil Goffer} and {\bf Waltraud Lederle} for their help with Section \ref{sec:Hecke pairs} and to 
 {\bf Itay Glazer} for numerous helpful discussions 
and for reading a preliminary version of this paper.
I also wish to thank my advisors {\bf Avraham Aizenbud} and {\bf Joseph Bernstein} for their help and guidance throughout this project.}
Lastly, I wish to thank the {\bf anonymous referee} for their time and effort, and for their 
valuable suggestions and remarks.

This work has particularly benefited from my participation in the doctoral school
``Introduction to Relative Aspects in Representation Theory, Langlands Functoriality and Automorphic Forms'' at CIRM, 
and in the ``Sphericity 2016'' conference. I wish to thank the  organizers of both conferences.

This work was partially supported by ISF grants  687/13 and 249/17, BSF grant 2012247, ERC StG grant 637912 and by a Minerva foundation grant.
\subsection{Conventions and notations}
\label{sec:conventions}
Throughout this paper $F$ is a non-Archimedean local field {of characteristic zero}.
Boldface letters denote algebraic groups (such as ${\bf G}$, ${\bf P}$), 
and the corresponding non-boldface letters denote their $F$-points.
{Unless stated otherwise, $G$ is assumed to be a locally compact, totally disconnected (abbreviated l.c.t.d), unimodular, Hausdorff topological group},
 and $H$ is assumed to be a closed subgroup, not necessarily unimodular.
 $\chi: H \to \complex^{\times}$ is assumed to a character of $H$.

We write $\delta := (\delta_G \delta_H^{-1})^{\frac{1}{2}}$ where $\delta_G$ and $\delta_H$ are the modular characters of $G$ and $H$ respectively, and $\tilde{\rho}:=(\rho^*)^{\mathrm{sm}}$ for the smooth dual of a representation  $\rho$ of $G$. We also use $\rho^{H,\chi}:=\curly{v \in \rho: h\cdot v = \chi(h)v ~\forall h\in H}$.

We write $C^\infty_c(X)$ for the space of smooth (i.e. uniformly locally constant), compactly supported functions on $X$ 
and $\Dist(X)$ for the space of distributions on $X$, i.e. all linear functionals on $C^\infty_c(X)$.
For a function $f$ on $G$ and a map $\sigma : G \to G$ we write $f^\sigma=f \circ \sigma$, 
and use $\angles{\xi^\sigma,f}=\angles{\xi,f^\sigma}$ for distributions.
\section{Preliminaries}
\subsection{General representation theoretic facts and the Gelfand-Kazhdan criterion}
Recall we consider the category $\repg$ of smooth representation of a group $G$, where $\Irr{G}$ denotes the set of its irreducible objects (up to isomorphism).
Let $\pi$ be a representation of $H$ and let $\chi : H \to \complex^{\times}$ be a character of $H$. 
\begin{defn} We define the induction and compact induction functors as follows.
\begin{enumerate}
\item $\Ind_H^G \pi := \curly{ f : G \to \complex : f(hg)=\pi(h)(f(g))\text{ and } f\text{ is smooth}}$ where the action is 
$g'\cdot f(g)= f(gg')$.
\item $\ind_H^G \pi := \curly{ f \in  \Ind_H^G \pi : \supp(f) \text{ is compact modulo }$H$}$.
\end{enumerate}
\end{defn}
Lemmas \ref{lem:standardfacts} and \ref{lem:distquotisom} are standard, and are used extensively throughout the paper.
\begin{lem} \label{lem:standardfacts}
Let $\rho$ be an irreducible representation of $G$.
\begin{enumerate}
\item $\widetilde{\ind_H^G \pi} \simeq \Ind_H^G  (\widetilde{\pi} \otimes \delta_{H \backslash G} )$.
\item $\Hom_G(\ind_H^G \chi^{-1} \delta,\rho) \simeq  \Hom_G(\tilde\rho, \Ind_H^G \delta \chi)$ if $\rho$ is admissible.
\item $\Hom_G(\rho, \Ind_H^G \pi) \simeq \Hom_H(\rho_{|H}, \pi)$.
\item $\Hom_G( \ind_H^G \pi, \rho) \simeq \Hom_H(\pi, \rho_{|H})$ if $H$ is open.
\end{enumerate}
\end{lem}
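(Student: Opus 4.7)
The plan is to verify parts (1)--(4) as the standard adjunction and duality statements for smooth (compact) induction, taking care with modular characters since $H$ need not be unimodular.

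For parts (3) and (4), I would construct explicit inverse maps giving Frobenius reciprocity. For (3), given a $G$-equivariant map $\phi : \rho \to \Ind_H^G \pi$, evaluation at the identity $v \mapsto \phi(v)(e)$ produces an $H$-equivariant map $\rho|_H \to \pi$; conversely, an $H$-map $\psi : \rho|_H \to \pi$ yields $\Phi(v)(g) := \psi(g \cdot v)$, which lies in $\Ind_H^G \pi$ (smoothness comes for free since $\rho$ is smooth) and is $G$-equivariant. For (4), when $H$ is open in $G$, every element of $\ind_H^G \pi$ is a finite sum of translates of functions $f_v$ supported on $H$ with $f_v(h) = \pi(h)v$; the map $v \mapsto f_v$ embeds $\pi$ into $(\ind_H^G \pi)|_H$ as an $H$-submodule (using that the subgroup $H$ being open makes $f_v$ automatically smooth), and precomposition with it sends a $G$-map $\Phi : \ind_H^G \pi \to \rho$ to an $H$-map $\pi \to \rho|_H$; the inverse takes $\psi$ to the unique $G$-extension of $v \mapsto \psi(v)$ (well defined since translates of $f_v$ generate and have transparent intersection relations under $G$-action).

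For part (1), I would use the canonical pairing between $\ind_H^G \pi$ and $\Ind_H^G (\tilde{\pi} \otimes \delta_{H\backslash G})$. Given $f \in \ind_H^G \pi$ and $f' \in \Ind_H^G (\tilde{\pi} \otimes \delta_{H\backslash G})$, the function $g \mapsto \langle f'(g), f(g) \rangle$ descends, via the modular twist $\delta_{H\backslash G}$, to a well-defined compactly supported measure on $H\backslash G$ that can be integrated. This pairing is $G$-invariant and non-degenerate, and by general nonsense its smooth part on the $\Ind$-side realizes $\widetilde{\ind_H^G \pi}$. The modular factor $\delta_{H\backslash G} = \delta_G \delta_H^{-1}$ appears precisely to make the integration formula on the homogeneous space $H\backslash G$ $G$-invariant, matching the convention $\delta$ from Section~1.

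For part (2), I would combine (1) with the admissibility of $\rho$. Applying $\Hom_G(-, \rho)$ and using that $\rho$ admissible implies $\tilde{\tilde{\rho}} \simeq \rho$, we have
\[
\Hom_G(\ind_H^G \chi^{-1}\delta, \rho) \simeq \Hom_G(\tilde{\rho}, \widetilde{\ind_H^G \chi^{-1}\delta}) \simeq \Hom_G(\tilde{\rho}, \Ind_H^G (\chi \delta^{-1} \otimes \delta_{H\backslash G})),
\]
and simplifying $\delta^{-1} \delta_{H\backslash G} = \delta$ using $\delta = (\delta_G \delta_H^{-1})^{1/2}$ and $\delta_{H\backslash G} = \delta_G \delta_H^{-1}$ yields the stated formula.

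The main obstacle I expect is the careful bookkeeping of modular characters in (1) and (2); the Frobenius reciprocities (3) and (4) are routine once the relevant evaluation maps are written down, and the admissibility argument in (2) is immediate. The only genuine care required is verifying non-degeneracy of the pairing in (1) and that the $\delta$-twists align with the convention $\delta = (\delta_G\delta_H^{-1})^{1/2}$ fixed in Section~\ref{sec:conventions}.
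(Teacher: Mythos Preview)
Your outline is correct and follows the standard arguments; the paper itself does not prove this lemma, simply declaring that ``Lemmas \ref{lem:standardfacts} and \ref{lem:distquotisom} are standard, and are used extensively throughout the paper.'' Your sketch is exactly the expected content behind that remark, including the correct bookkeeping $\delta^{-1}\delta_{H\backslash G}=\delta$ (using $\delta_G=1$) needed to deduce (2) from (1).
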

\begin{lem} \label{lem:distquotisom}
$(\ind_H^G \chi)^* \simeq \mathrm{Dist}(G)^{H,\chi\delta_H}$.
\end{lem}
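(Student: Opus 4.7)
The plan is to realize $(\ind_H^G \chi)^*$ inside $\Dist(G)$ by transposing a canonical surjection from $C_c^\infty(G)$ onto $\ind_H^G \chi$, and then to identify the image by the natural equivariance condition.

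First I would construct the averaging map $\pi : C_c^\infty(G) \twoheadrightarrow \ind_H^G \chi$ defined by
\[
\pi(\phi)(g) \;=\; \int_H \chi(h)^{-1} \delta_H(h)^{-1} \, \phi(h^{-1}g)\, dh,
\]
with $dh$ a chosen left Haar measure on $H$. The modular twist by $\delta_H^{-1}$ is inserted exactly so that (a) the integrand descends to a well-defined density on $H$ independent of conventions, and (b) the result satisfies the left-transformation law $\pi(\phi)(hg) = \chi(h)\pi(\phi)(g)$. Smoothness of $\pi(\phi)$ and compact support modulo $H$ follow from the compact support of $\phi$ together with the fact that $H \backslash G$ is l.c.t.d. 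Surjectivity of $\pi$ is the standard Bruhat--Schwartz argument: using local sections of $G \to H \backslash G$ and a partition of unity on the compact support (modulo $H$) of a given $f \in \ind_H^G \chi$, one writes $f = \pi(\phi)$ for an explicit $\phi \in C_c^\infty(G)$.

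Taking the transpose yields an injection $\pi^* : (\ind_H^G \chi)^* \hookrightarrow (C_c^\infty(G))^* = \Dist(G)$. A distribution $\xi \in \Dist(G)$ lies in the image of $\pi^*$ if and only if it annihilates $\ker \pi$. By the construction of $\pi$, the kernel is spanned (in the appropriate sense; here no topological closure is needed because everything is algebraic in the l.c.t.d. setting) by the differences
\[
\phi \;-\; \chi(h)^{-1}\delta_H(h)^{-1} \, L_h \phi, \qquad \phi \in C_c^\infty(G),\ h \in H,
\]
where $L_h \phi(g) = \phi(h^{-1}g)$. Annihilating every such difference is exactly the condition $L_h^*\xi = \chi(h)\delta_H(h)\xi$, i.e.\ $\xi \in \Dist(G)^{H,\chi\delta_H}$. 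Combining, $\pi^*$ identifies $(\ind_H^G \chi)^*$ with $\Dist(G)^{H,\chi\delta_H}$ as claimed.

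The only subtle point is the bookkeeping of modular characters: showing that the factor appearing in the equivariance is precisely $\chi \delta_H$, and not $\chi \delta_H^{-1}$ or $\chi^{-1}\delta_H$, requires careful use of the defining identity $d(hh_0) = \delta_H(h_0)^{-1} dh$ for left Haar $dh$ on $H$ when changing variables inside $\pi$. Once this one substitution is carried out correctly, both the transformation law of $\pi(\phi)$ and the description of $\ker \pi$ fall out with matching characters, and the lemma follows. The surjectivity step, while standard, is the only non-formal ingredient and is the other place where care is needed to ensure the arbitrary $f \in \ind_H^G \chi$ really comes from a \emph{compactly supported} $\phi$.
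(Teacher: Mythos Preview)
Your proposal is correct and follows essentially the same route as the paper: define the averaging surjection $C_c^\infty(G)\twoheadrightarrow \ind_H^G\chi$ and dualize. The paper is terser---it writes $\Phi_{f,\chi}(g)=\int_H f(hg)\chi^{-1}(h)\,d\nu_H$, notes the equivariance, and defers the remaining verifications (surjectivity and identification of the image of the transpose) to \cite[Lemma 3.1]{Of}---whereas you spell out the kernel description and the equivariance bookkeeping explicitly.
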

\begin{proof}
Define a map $\Phi: C_c^\infty(G) \to \ind_H^G \chi$ by $\Phi_{f,\chi} (g)=\int_H f(hg)  \chi^{-1}(h) d\nu_H $.
The image satisfies the necessary equivariance conditions, and its dual gives the desired isomorphism (see \cite[Lemma 3.1]{Of}).
\end{proof}
Set $d_{H,\chi}(\rho):=\dim_\complex\Hom_H(\rho|_H , \chi)$.
{The following generalizes Definition \ref{def:GelRed}(1)} to non-unimodular pairs. 
\begin{defn} \label{def:TwistedGelfand}
We say that $(G,H)$ is a $\chi$-Gelfand pair if $\dim_\complex\Hom_H(\rho|_H , \chi{\delta}) \leq 1$ 
for every irreducible representation $\rho$ of $G$. 
\end{defn}
Recall that an involution $\mu : G \to G$ is an automorphism of $G$ of order $2$, 
and that an anti-involution $\sigma$ is a map of the form $\sigma(g)=\mu(g)^{-1}$ where $\mu$ is an involution.
The following is immediate.
\begin{lem} \label{lem:MeasureUnderInv} 
Let $\mu$ be an involution of $G$, and let $\nu_G$ be a left invariant Haar measure on $G$.
Then changing variables $x \mapsto \mu(x)$ preserves $\nu_G$, that is $\mu_*(\nu_G) = \nu_G$. 
\end{lem}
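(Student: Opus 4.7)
The plan is to use the uniqueness of left Haar measure (up to positive scalar) together with the order-two hypothesis on $\mu$, which should force the scalar to be $1$.

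First I would verify that the push-forward $\mu_*(\nu_G)$ is itself a left-invariant Haar measure on $G$. For any $g \in G$ and measurable $E \subseteq G$, since $\mu$ is an automorphism,
\[
\mu_*(\nu_G)(gE) = \nu_G(\mu^{-1}(gE)) = \nu_G\bigl(\mu^{-1}(g)\cdot \mu^{-1}(E)\bigr) = \nu_G(\mu^{-1}(E)) = \mu_*(\nu_G)(E),
\]
where the third equality uses left-invariance of $\nu_G$. The measure $\mu_*(\nu_G)$ is also nonzero and Radon (since $\mu$ is a homeomorphism), so by uniqueness of Haar measure there exists a constant $c > 0$ with $\mu_*(\nu_G) = c\cdot \nu_G$.

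Next I would apply $\mu_*$ a second time and exploit $\mu^2 = \mathrm{Id}$. On one hand, functoriality gives
\[
\mu_*\bigl(\mu_*(\nu_G)\bigr) = (\mu \circ \mu)_*(\nu_G) = \nu_G.
\]
On the other hand, linearity of the push-forward yields
\[
\mu_*\bigl(\mu_*(\nu_G)\bigr) = \mu_*(c\cdot \nu_G) = c\cdot \mu_*(\nu_G) = c^2 \cdot \nu_G.
\]
Comparing the two expressions gives $c^2 = 1$, and since $c>0$ we conclude $c=1$, i.e.\ $\mu_*(\nu_G)=\nu_G$.

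There is no real obstacle here: the only subtle point is checking that $\mu_*(\nu_G)$ is indeed a left Haar measure rather than a right one, which requires using that $\mu$ is a group automorphism (not merely an anti-automorphism). If the lemma were stated for anti-involutions $\sigma$ one would instead obtain a right Haar measure and need the unimodularity hypothesis on $G$ (already in force by the conventions section) to run the same scalar-matching argument.
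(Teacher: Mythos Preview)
Your proof is correct and follows exactly the same strategy as the paper: observe that $\mu_*(\nu_G)$ is again a left Haar measure, invoke uniqueness to get a positive scalar, and use $\mu^2=\mathrm{Id}$ to force that scalar to be $1$. You have simply written out the verification of left-invariance and the handling of the scalar in more detail than the paper does.
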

We now state a general version of the Gelfand-Kazhdan criterion for future reference (for a proof see Appendix \ref{app: general proof of GK}).
\begin{thm} (Gelfand-Kazhdan criterion)\label{prop:GK1}  
Assume there exists an anti-involution $\sigma:G\to G$ such that $\sigma(H)=H$ and such that for every $\xi\in C^{-\infty}(G)^{(H,\chi^{-1}) \times (H,\chi^\sigma)}$,
i.e. $\xi$ where
\[
L_h(\xi)=\chi^{-1}(h)\xi, ~~~~~~~ R_h(\xi)=\chi^\sigma(h)\xi
\]
we have $\xi^\sigma=\xi$.
Then 
$d_{H,\chi}(\rho) \cdot d_{H,\chi^\mu}(\tilde{\rho}) \leq 1$
for 
every 
$\rho\in \mathrm{Irr}(G)$, where $\mu(g)=\sigma(g)^{-1}$.
\end{thm}
\subsection{The Bernstein decomposition} \label{sec:Bdecomposition}
Let ${\bf G}$ be a reductive algebraic group and set $G={\bf G}(F)$. 
The theory of the Bernstein center allows us to study $\repg$ 
by decomposing it into smaller indecomposable sub-categories called Bernstein blocks. 
We give a short review of the parts of this theory which are used in this work,  see \cite{BD84}, \cite{BR}, or  \cite{Ro09} for a thorough treatment. 
We first need to establish some notations.
\begin{defn}
Let $G$ be as above.
We define $G_0 \leq G$ to be the inverse image of the maximal compact subgroup of $G/[G,G]$.
Alternatively, this is the subgroup of $G$ generated by all compact subgroups.
\end{defn}
The idea is that the representation theories of $G$ and $G_0$ are closely related, while $G_0$ is simpler (i.e. it has compact center).
We get that $G_0$ is an open normal subgroup, and that $G/G_0$ is a finitely generated, discrete, abelian group (see \cite[Proposition 22]{BR}).
Evidently, this gives the set of characters $\Hom_G(G/G_0,\complex^\times)$
a structure of an algebraic torus $(\complex^\times)^l$ where $l$ is the rank of $G/G_0$.
\begin{defn}
We denote the variety $\Hom_G(G/G_0,\complex^\times)$ by $\frak{X}_G$, and call a character $\chi \in \frak{X}_G$  {\it unramified}.
\end{defn}
Recall that a representation of $G$ is {\it cuspidal} if the support of its matrix coefficients 
is compact when projected to $G/Z(G)$ where $Z(G)$ is the center of $G$.
A  {\it cuspidal datum} of $G$ is a pair $(M,\rho)$
where $M={\bf M}(F)$ is a Levi of a parabolic subgroup of ${\bf G}$ and $\rho$ an irreducible cuspidal representation of $M$. 
We define an equivalence relation on the set of cuspidal data by $(M , \rho) \sim (M', \rho')$ if 
there exists $g \in G$ such that the following holds, where $\mathrm{Int}(g)$ is the conjugation action:
\begin{enumerate}
\item $\mathrm{Int}(g) (M) = M'$;
\item $\mathrm{Int}(g)(\rho) \simeq \rho'\otimes \chi$, where $\chi \in \frak{X}_M$.
\end{enumerate}
We denote the equivalence class of $(M,\rho)$ by $[M,\rho]$.
We can now describe the Bernstein blocks:
\begin{defn}
Let $(M,\rho)$ be a cuspidal datum.
\begin{enumerate}
\item We set $\Psi(M,\rho)=i_{GM}(\ind_{M_0}^M (\rho{|_{M_0}}))$ 
where $i_{GM}$ is the normalized parabolic induction from $M$ to $G$.
\item The block $\mathcal{B}_{(M,\rho)}$ is defined to be the full subcategory of $\repg$ generated by $\Psi(M,\rho)$.
\item Let $R_{(M,\rho)}:=\End_G(\Psi(M,\rho))$, and define the functor 
$H_{(-)}^{(M,\rho)}:\repg \to  \mathrm{Mod}(R_{(M,\rho)})$ by  $H_{V}^{(M,\rho)} = \Hom_G(\Psi(M,\rho),V)$.
\end{enumerate}
\end{defn}

\begin{prop} \label{prop:description of blocks}
Let $(M,\rho)$ and $(M',\rho')$ be cuspidal data.
\begin{enumerate}
\item The categories $\mathcal{B}_{(M,\rho)}$ and $\mathcal{B}_{(M',\rho')}$ are equal (as subcategories of $\repg$) if and only if  $(M,\rho)\sim(M',\rho')$, so we may write $\mathcal{B}_{[M,\rho]}$ 
for the corresponding block. 
\item The functor ${H_{(-)}^{(M,\rho)}}_{|{\mathcal{B}_{(M,\rho)}}}:\mathcal{B}_{(M,\rho)} \to  \mathrm{Mod}(R_{(M,\rho)})$ is an equivalence of categories.
\end{enumerate}
\end{prop}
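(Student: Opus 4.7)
The plan is to reduce both parts to two pillars of Bernstein's theory: the uniqueness (up to the equivalence $\sim$) of the cuspidal support of an irreducible smooth representation of $G$, and the fact that $\Psi(M,\rho)$ is a projective generator of $\mathcal{B}_{(M,\rho)}$. Once these are in hand, both assertions become essentially formal.

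For part (1), the easy direction is to fix $g\in G$ realising $(M,\rho)\sim(M',\rho')$ and to show $\Psi(M,\rho)\simeq\Psi(M',\rho')$. The key observation is that $\ind_{M_0}^M(\rho|_{M_0})$ depends only on the restriction $\rho|_{M_0}$, hence is insensitive to unramified twists of $\rho$ by elements of $\frak{X}_M$; combined with the compatibility of normalized parabolic induction with $G$-conjugation, this yields $\Psi(M,\rho)\simeq\Psi(M',\rho')$ as $G$-representations, so the generated subcategories coincide. For the converse direction, note that every irreducible subquotient of $\Psi(M,\rho)$ has cuspidal support in $[M,\rho]$; if $\mathcal{B}_{(M,\rho)}=\mathcal{B}_{(M',\rho')}$, then $\Psi(M',\rho')$ also lies in the former block, and comparing cuspidal supports of its irreducible subquotients forces $[M,\rho]=[M',\rho']$ by the uniqueness of cuspidal support.

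For part (2), the plan is a standard Morita-theoretic argument. Writing $\Psi:=\Psi(M,\rho)$ and $R:=R_{(M,\rho)}=\End_G(\Psi)$, the fact that $\Psi$ is a projective generator of $\mathcal{B}_{(M,\rho)}$ makes the functor $\Hom_G(\Psi,-)$ exact and conservative on the block; one checks that its quasi-inverse is $N\mapsto N\otimes_R\Psi$, and the required natural isomorphisms follow from the projective-generator property together with compatibility with filtered colimits (since $\Psi$ is finitely generated as an object of $\repg$, the Hom functor commutes with filtered colimits).

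The main obstacle is verifying that $\Psi$ is a projective generator of $\mathcal{B}_{(M,\rho)}$. Projectivity requires Bernstein's second adjointness theorem: normalized parabolic induction $i_{GM}$ admits an exact (right) adjoint, the normalized Jacquet functor, so it preserves projectives; it remains to check that $\ind_{M_0}^M(\rho|_{M_0})$ is a projective object of $\mathrm{Rep}(M)$, which follows from $M_0\leq M$ being open together with Frobenius reciprocity. Generation amounts to showing that every irreducible $\pi\in\mathcal{B}_{(M,\rho)}$ receives a non-zero $G$-map from $\Psi$, which reduces via Frobenius reciprocity to the statement that $\pi$ has cuspidal support in $[M,\rho]$, i.e., to the very definition of membership in the block.
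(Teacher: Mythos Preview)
Your sketch is correct in outline and follows the standard route through Bernstein's theory. Note, however, that the paper does not actually give a proof of this proposition: its ``proof'' consists entirely of citations to \cite[Proposition 35]{BR} for part (1) and to \cite[Lemma 22 and Theorem 23]{BR} for part (2). So there is no argument in the paper to compare against beyond the references themselves; what you have written is essentially a summary of the content of those cited results.

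A couple of small technical remarks on your sketch. First, in the easy direction of (1) you should also note that normalized parabolic induction is, up to isomorphism, independent of the choice of parabolic with given Levi, so that conjugating $M$ to $M'$ really does carry $i_{GM}$ to $i_{GM'}$. Second, in (2) your appeal to second adjointness is the right mechanism for projectivity of $\Psi$, but the claim that $\Psi$ is finitely generated (hence that $\Hom_G(\Psi,-)$ commutes with filtered colimits) is not entirely trivial: one must check that $\ind_{M_0}^M(\rho|_{M_0})$ is finitely generated over $M$, which uses that $\rho$ is admissible and that $M/M_0$ is a lattice, and then that $i_{GM}$ preserves finite generation. These are exactly the points handled in the cited portions of \cite{BR}, so if you intend your write-up to replace the bare citation you should make them explicit; otherwise your sketch is a faithful expansion of what the paper defers to the literature.
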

\begin{proof}
For (1) see \cite[Proposition 35]{BR}.
For (2) see \cite[Lemma 22]{BR} and \cite[Theorem 23]{BR}.
\end{proof}
Let $\Omega$ be the set of equivalence classes of cuspidal data under $\sim$.
The following theorem, due to Bernstein, describes the structure of $\repg$:
\begin{thm}\cite[Decomposition Theorem]{BR} \label{thm:BerDecomp}
The category $\repg$ admits a decomposition $\repg \simeq \prod\limits_{[M,\rho]\in \Omega} \mathcal{B}_{[M,\rho]}$.
\end{thm}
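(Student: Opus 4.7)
The plan is to prove the decomposition in three stages: first, assign to each irreducible representation a well-defined cuspidal support in $\Omega$; second, show that $\Hom_G$ vanishes between objects attached to distinct classes in $\Omega$; third, construct a functorial decomposition of every smooth representation indexed by the blocks.

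For the first step, I would invoke Jacquet's theorem: every irreducible $\pi \in \Irr{G}$ arises as a subquotient of $i_{GM}(\sigma)$ for some cuspidal pair $(M,\sigma)$, and by Harish-Chandra's theorem on the uniqueness of cuspidal support (which is proved via the geometric lemma applied to iterated Jacquet modules), the resulting class $[M,\sigma] \in \Omega$ is an invariant of $\pi$. Step two is then a formal consequence: using Frobenius reciprocity (Lemma \ref{lem:standardfacts}) together with second adjointness between normalized parabolic induction and Jacquet restriction (via opposite parabolics), any nonzero $G$-homomorphism between $V \in \mathcal{B}_{[M_1,\rho_1]}$ and $W \in \mathcal{B}_{[M_2,\rho_2]}$ would force the Jacquet modules of $V$ and $W$ along compatible parabolics to share an irreducible cuspidal constituent, hence $[M_1,\rho_1] = [M_2,\rho_2]$.

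The main obstacle is the third step, since the generators $\Psi(M,\rho)$ are only finitely generated (not admissible) and a naive sub-representation argument does not produce a canonical splitting of an arbitrary smooth $V \in \repg$. Bernstein's resolution is to construct, inside the center $Z(G)$ of the category $\repg$ (the algebra of endomorphisms of the identity functor), a complete family of orthogonal central idempotents $\{e_{[M,\rho]}\}_{[M,\rho] \in \Omega}$ acting as a partition of unity on every finitely generated smooth representation. Producing these idempotents rests on identifying $Z(G)$ with the ring of regular functions on $\Omega$, viewed as a disjoint union of affine algebraic varieties whose components are quotients of the tori $\frak{X}_M$ by finite groups acting by unramified twists and Weyl-type conjugation. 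This geometric/analytic identification is by far the deepest input, and requires Paley--Wiener-type control of intertwining operators for parabolically induced representations.

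Once the $e_{[M,\rho]}$ are in hand, setting $V_{[M,\rho]} := e_{[M,\rho]} V$ yields a functorial direct sum decomposition $V = \bigoplus_{[M,\rho]\in \Omega} V_{[M,\rho]}$. Combining with the cuspidal support assignment of step one and the vanishing in step two, one checks that each $V_{[M,\rho]}$ is the maximal subobject of $V$ lying in $\mathcal{B}_{[M,\rho]}$ (using Proposition \ref{prop:description of blocks}(2) to reduce verification to $R_{(M,\rho)}$-modules). This delivers the product decomposition $\repg \simeq \prod_{[M,\rho]\in\Omega} \mathcal{B}_{[M,\rho]}$.
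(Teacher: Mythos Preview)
The paper does not prove this theorem at all: it is stated with the citation \cite[Decomposition Theorem]{BR} and no proof environment follows. So there is nothing in the paper to compare your argument against; the author simply imports Bernstein's result as a black box from the Bernstein--Rumelhart notes.

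Your sketch is a reasonable outline of the standard proof of the Bernstein decomposition, and the three-step architecture (cuspidal support, orthogonality of blocks, central idempotents) is indeed how the argument in \cite{BR} proceeds. One caution: you describe the construction of the idempotents $e_{[M,\rho]}$ as resting on the full identification of the Bernstein center with regular functions on $\Omega$ and on Paley--Wiener-type control of intertwining operators. In Bernstein's actual treatment the logical order is somewhat different: the splitting into blocks is established first, essentially from the finiteness of cuspidal components of Jacquet modules (the key point being that for any compact open $K$, only finitely many blocks have nonzero $K$-invariants), and the description of the center of each block as $\mathcal{O}(\frak{X}_M)^W$ is derived afterward. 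So the decomposition itself does not require the harder analytic input you allude to. If you intend this as a genuine proof rather than a pointer to the literature, you should make that finiteness argument explicit, since it is what guarantees the infinite sum $\sum e_{[M,\rho]}$ acts as the identity on every smooth (not just finitely generated) representation.
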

In particular, in many situations we can restrict our attention to one block at a time, which by Proposition \ref{prop:description of blocks} is equivalent to a category of modules.
The blocks  which will be important for us are the cuspidal blocks.
\subsection{Cuspidal blocks}
Fix an irreducible cuspidal representation $\rho$ of $G$, and set $\Psi(\rho) := \Psi(G,\rho)=\ind_{G_0}^G (\rho{|_{G_0}})$, a projective generator of the block $\mathcal{B}_{[G,\rho]}$, and 
$H_{\rho,(-)}:=H_{(-)}^{(G,\rho)}$ for the projection to the block $\mathcal{B}_{[G,\rho]}$.

We have an isomorphism of algebras  $B := (\mathcal{O}(\frak{X}_G),\cdot)\simeq (C_c[G/G_0],*)$ by choosing generators $G/G_0\simeq \ints^l$ and sending $f \in C_c[G/G_0]$ to $\sum\limits_{i \in \ints^l} f(i)x^i \in \complex[x_1^{\pm1},\ldots,x_l^{\pm1}]$.
Now, set $R_\rho=R_{(G,\rho)}$ and embed $B \simeq C_c[G/G_0]$ into $R_\rho$ by $(b\cdot f)(x)=b(x)f(x)$
for $f \in \Psi(\rho)$ and $b \in B$. 
Finally, set $Z_\rho=Z(R_\rho)$.

\begin{thm}[{\cite[Proposition 4.5]{AS}}] \label{thm:RamiEytanresults}
Let $Z_\rho \subseteq B \subseteq R_\rho$ be as above, then the following hold:
\begin{enumerate}
\item After applying the functor $B\otimes_{Z_\rho}$ to the triple $Z_\rho \subseteq B \subseteq R_\rho$
we get 
\[
B\cdot \mathrm{Id} \subseteq \mathrm{diag}(B) \subseteq \mathrm{M}_n(B),
\]
 where $\mathrm{M}_n(B)$ denotes the algebra of $n\times n$ matrices with values in $B$, $\mathrm{diag}(B)$ its subalgebra of 
diagonal matrices 
and $B \cdot \mathrm{Id}$ the subalgebra of scalar matrices  with values in $B$.
\item The map $\mathrm{Spec}(B) \to \mathrm{Spec}(Z_{\rho})$ induced by the inclusion $Z_{\rho} \subseteq B$ is surjective and \'{e}tale.
\end{enumerate}
\end{thm}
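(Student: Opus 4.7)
The plan is to compute $R_\rho = \End_G(\Psi(\rho))$ explicitly via a Mackey-type analysis, identify $Z_\rho$ inside it, and then read off both statements from the resulting description. The structural input is Clifford-theoretic: since $G_0$ is open and normal in $G$ with $G/G_0$ finitely generated abelian, the cuspidal $\rho|_{G_0}$ decomposes as a finite multiplicity-free direct sum $\bigoplus_{i=1}^n \sigma_i$ of pairwise non-isomorphic irreducible $G_0$-representations, transitively permuted by the conjugation action of $G/G_0$. Let $\mathfrak{S} := \{\chi \in \mathfrak{X}_G : \rho \otimes \chi \simeq \rho\}$; by the standard correspondence between twists and stabilizer characters, $\mathfrak{S}$ is a finite subgroup of the torus $\mathfrak{X}_G$ of order $n$.

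First I would compute $R_\rho$ using Lemma \ref{lem:standardfacts}(4) together with the decomposition of $\Psi(\rho)|_{G_0}$ into $G/G_0$-translates of $\rho|_{G_0}$:
\[
R_\rho \simeq \Hom_{G_0}(\rho|_{G_0},\, \Psi(\rho)|_{G_0}) \simeq \bigoplus_{gG_0 \in G/G_0} \Hom_{G_0}(\rho|_{G_0},\, g\cdot \rho|_{G_0}),
\]
where each summand is $0$ or $1$-dimensional depending on whether $g$ carries the orbit $\{\sigma_i\}$ to itself. Repackaging this sum exhibits $R_\rho$ as a twisted crossed product of $B = \complex[G/G_0]$ with the stabilizer $\mathfrak{S}$, i.e.\ an Azumaya algebra of degree $n$ over its center, with $B$ sitting inside as a maximal commutative subalgebra. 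A direct calculation then identifies the center as $Z_\rho = B^{\mathfrak{S}}$, where $\mathfrak{S}$ acts on $B = \mathcal{O}(\mathfrak{X}_G)$ by translation (the action coming from twisting $\rho$ by unramified characters).

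For (2), note that the multiplication action of $\mathfrak{S}$ on the torus $\mathfrak{X}_G$ is free, since a translation in an algebraic group fixes a point only if it is the identity. Therefore $\mathrm{Spec}(B) \to \mathrm{Spec}(B^{\mathfrak{S}}) = \mathrm{Spec}(Z_\rho)$ is the quotient map for a free action of a finite group on an affine variety, hence finite, \'{e}tale and surjective.

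For (1), I would exploit the same Galois structure. Standard descent for the Galois cover $B/Z_\rho$ with group $\mathfrak{S}$ yields
\[
B \otimes_{Z_\rho} B \;\simeq\; \prod_{\chi \in \mathfrak{S}} B, \qquad b \otimes b' \;\mapsto\; \bigl( b \cdot \chi(b') \bigr)_{\chi \in \mathfrak{S}},
\]
which, under the ordering of $\mathfrak{S}$ giving a basis of the free $B$-module $B \otimes_{Z_\rho} B$, identifies this algebra with $\mathrm{diag}(B) \subset M_n(B)$; the further sub-inclusion $B \otimes_{Z_\rho} Z_\rho = B \hookrightarrow B \otimes_{Z_\rho} B$ is then the scalar diagonal $B \cdot \mathrm{Id}$. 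It remains to see that $B \otimes_{Z_\rho} R_\rho \simeq M_n(B)$. Fiberwise at each closed point $\chi \in \mathrm{Spec}(B)$, the quotient $R_\rho \otimes_{Z_\rho} \kappa(\chi)$ is $\End_\complex(\rho \otimes \chi) = \complex$ by Schur's lemma; since $R_\rho$ is Azumaya of rank $n^2$ over $Z_\rho$ and the base change $B$ splits the generic fiber, the twisted crossed product structure trivializes over $B$ to a matrix algebra.

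The main obstacle will be this last splitting step: writing down an explicit trivialization $B \otimes_{Z_\rho} R_\rho \simeq M_n(B)$ compatible with the embedding of $\mathrm{diag}(B)$ requires choosing, globally over $B$, a consistent family of intertwiners realizing the $\mathfrak{S}$-equivariance of the orbit $\{\rho \otimes \chi\}$, which amounts to showing that a certain cohomological obstruction (the Brauer class of $R_\rho$ after base change) vanishes. One can handle this either by grinding out the crossed-product presentation on explicit generators coming from representatives of $G/G_0$ acting on the $\sigma_i$, or by invoking the vanishing of Brauer classes of Azumaya algebras over the affine torus $\mathrm{Spec}(B)$ whose generic fiber is split.
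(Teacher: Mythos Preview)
The paper does not prove this theorem; it is quoted verbatim from \cite[Proposition 4.5]{AS} and used as a black box, so there is no in-paper argument to compare against. Your outline is in fact the standard route to such a result and is essentially what the cited reference does: identify $\mathfrak{S}=\{\psi\in\mathfrak{X}_G:\rho\otimes\psi\simeq\rho\}$, recognise $Z_\rho=B^{\mathfrak{S}}$, deduce (2) from freeness of the translation action of a finite subgroup on a torus, and obtain (1) by Galois descent for the \'etale cover $\mathrm{Spec}(B)\to\mathrm{Spec}(Z_\rho)$.

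Two remarks on the write-up. First, the sentence ``fiberwise at each closed point $\chi\in\mathrm{Spec}(B)$, the quotient $R_\rho\otimes_{Z_\rho}\kappa(\chi)$ is $\End_\complex(\rho\otimes\chi)=\complex$'' is not right: you are tensoring over $Z_\rho$, so the fibre is taken at the image point in $\mathrm{Spec}(Z_\rho)$, and that fibre is $\mathrm{M}_n(\complex)$, not $\complex$. (This is consistent with, not in tension with, the conclusion $B\otimes_{Z_\rho}R_\rho\simeq \mathrm{M}_n(B)$.) Second, the ``main obstacle'' you flag is not an obstacle at all. Once you know $R_\rho$ is Azumaya over $Z_\rho$ and that $B\subset R_\rho$ is a maximal \'etale commutative subalgebra (rank $n$ inside a rank-$n^2$ algebra), it is a general fact that such a subalgebra splits the Azumaya algebra: the left $R_\rho$-action on $R_\rho$ viewed as a (free, rank $n$) $B$-module already furnishes an isomorphism $B\otimes_{Z_\rho}R_\rho\xrightarrow{\sim}\End_B(R_\rho)\simeq \mathrm{M}_n(B)$, compatibly carrying $B\otimes_{Z_\rho}B$ to the diagonal. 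No cohomological vanishing over the torus is needed, and no explicit choice of intertwiners for the $\mathfrak{S}$-action has to be globalised.
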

Next we define the property of being an $F$-spherical pair and 
state a result for such pairs, due to Aizenbud-Sayag, which is a main ingredient 
in the proof of Theorem \ref{thm:MainTheorem}.
\begin{defn}  \label{def:sphericalpair}
A pair $(G,H)$ is called $F$-spherical if 
$|H\backslash G/P|< 
\infty$ for every $P={\bf P}(F)$ and ${\bf P}$ a parabolic of ${\bf G}$ defined over $F$.
\end{defn}
\begin{thm}[{Implications of \cite[Theorems 4.3 and 6.1]{AS}}] 
\label{thm:locally free sheaf}
Assume $(G,H)$ is an $F$-spherical pair and that $d_{H,\chi}(\rho) < \infty $.
\begin{enumerate}
\item  $\Hom_{G}(V,\rho \otimes \psi) \simeq \Hom_{\mathcal{O}_{\frak{X}_G}}(H_{\rho,V},\delta_{\psi})$ for every $V \in \repg$, where $\delta_{\psi}$ is the skyscraper sheaf over the trivial character $\psi \in \frak{X}_G$ with ring  $\mathcal{O}_{\frak{X}_G,\psi}/\frak{m}_\psi$, the residue field at $\psi$.
\item The module $H_{\rho,\ind_H^G  \chi^{-1} \delta}|_B$ is locally free over a smooth subvariety of $\frak{X}_G$.
\end{enumerate}

\end{thm}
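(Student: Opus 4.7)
The plan is to derive both parts by combining the cited Aizenbud--Sayag results with the Bernstein-theoretic machinery of Section~\ref{sec:Bdecomposition}, specifically the categorical equivalence of Proposition~\ref{prop:description of blocks}(2) and the algebra description of Theorem~\ref{thm:RamiEytanresults}.

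For (1), I first observe that $\rho\otimes\psi$ lies in the block $\mathcal{B}_{[G,\rho]}$, so by Theorem~\ref{thm:BerDecomp} both sides depend only on the projection of $V$ to this block, and I may assume $V\in \mathcal{B}_{[G,\rho]}$. Proposition~\ref{prop:description of blocks}(2) then translates the left-hand side into $\Hom_{R_\rho}(H_{\rho,V}, H_{\rho,\rho\otimes\psi})$, and it remains to identify this with $\Hom_B(H_{\rho,V},\delta_\psi)$. By Theorem~\ref{thm:RamiEytanresults}(1), after base change from $Z_\rho$ to $B$ the algebra $R_\rho$ becomes $M_n(B)$ and the inclusion $B\hookrightarrow R_\rho$ becomes the diagonal embedding $\mathrm{diag}(B)\hookrightarrow M_n(B)$; meanwhile $H_{\rho,\rho\otimes\psi}$, being the simple $R_\rho$-module with central character $\bar\psi\in \mathrm{Spec}(Z_\rho)$, base-changes to a module supported on the \'{e}tale fiber $\{\psi_1,\ldots,\psi_n\}$ over $\bar\psi$. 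Morita equivalence $M_n(B)\sim B$ then reduces $\Hom_{R_\rho}(-,H_{\rho,\rho\otimes\psi})$, after restriction along the embedding $B\hookrightarrow R_\rho$, to $\Hom_B(-,\delta_\psi)$ for the chosen preimage $\psi$. I expect the delicate step to be carefully tracking the two copies of $B$ appearing after base change (the ``scalar'' one versus the ``diagonal'' one) to confirm that the Morita reduction singles out precisely the skyscraper at~$\psi$.

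For (2), the plan is to combine (1) with the finite-multiplicity hypothesis to control the fibers of $M:=H_{\rho,\ind_H^G \chi^{-1}\delta}|_B$. Applying (1) with $V=\ind_H^G\chi^{-1}\delta$ together with Lemma~\ref{lem:standardfacts}(2) and Frobenius reciprocity identifies the fiber $M\otimes_B \kappa(\psi)$ with the dual of a twisted multiplicity of the form $d_{H,\chi'}(\widetilde{\rho\otimes\psi})$, for an appropriate character $\chi'$ depending on $\psi$. The $F$-sphericity hypothesis combined with \cite[Theorem~6.1]{AS} then forces these fiber dimensions to be locally constant on a nonempty open subvariety $U\subseteq \frak{X}_G$, while \cite[Theorem~4.3]{AS} (or a direct admissibility argument) provides finite generation of $M$ over $B$. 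Since $\frak{X}_G$ is a smooth variety (being an algebraic torus) and $M$ is coherent with locally constant fiber rank on $U$, standard commutative algebra gives that $M|_U$ is locally free, and $U$ is automatically smooth as an open subscheme of a smooth variety. The main technical hurdle will be extracting the precise locus $U$ from \cite{AS} and checking that the twisted multiplicity identification from (1) is uniform in $\psi$ rather than only pointwise.
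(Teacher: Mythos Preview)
The paper does not give its own proof of Theorem~\ref{thm:locally free sheaf}. The statement is presented in the preliminaries section as a direct quotation of consequences of \cite[Theorems~4.3 and~6.1]{AS}, with no argument supplied beyond the citation. There is therefore nothing in the paper to compare your proposal against.

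That said, your sketch is a plausible reconstruction of how one would derive these statements from the Bernstein machinery together with the cited Aizenbud--Sayag results, and it is broadly aligned with how the paper later \emph{uses} Theorem~\ref{thm:locally free sheaf} in the proof of Theorem~\ref{thm:commHeckeImpliesCGP}. Two remarks. First, in part~(1) your passage from $\Hom_{R_\rho}(H_{\rho,V},H_{\rho,\rho\otimes\psi})$ to $\Hom_B(H_{\rho,V},\delta_\psi)$ is the genuinely nontrivial point: one must check that $H_{\rho,\rho\otimes\psi}$ is, as a $B$-module, the skyscraper at $\psi$ (this follows from the definition of the $B$-action via $C_c[G/G_0]$ and the fact that $\psi$ is unramified), and then argue that the extra $R_\rho$-equivariance imposes no further constraint---this last step is where the \'etale base change and Morita argument you outline are needed, and it is not automatic. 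Second, for part~(2), the paper's later use of the result (see the proof of Theorem~\ref{thm:commHeckeImpliesCGP}) indicates that the ``smooth subvariety'' in question is the support of $H_{\rho,M}$ inside $\frak{X}_G$, realized as the vanishing locus of the annihilator ideal; the input from \cite{AS} is precisely that $H_{\rho,M}$ is finitely generated over $B$ and that its restriction to this support is locally free. Your plan to deduce local freeness from coherence plus locally constant fiber rank is correct in spirit, but the actual content of \cite[Theorem~6.1]{AS} is somewhat stronger than a fiberwise statement, and you should consult it directly rather than attempting to rebuild it from~(1).
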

\subsection{Direct integral decompositions of unitary representations}
\label{sec:Direct integral decompositions of unitary representations}
We follow \cite[Section 8]{KS18} and  \cite[Section 3.4]{Fuhr}, see these sources for a thorough treatment of direct integrals and disintegration of unitary representations.

Let $Z$ be a second countable topological space possessing a $\sigma$-finite Borel measure $\nu$.

\begin{defn} Let $\alpha : S \to V= \int^\oplus_{Z} V_z d\nu(z) $ be a continuous morphism from a topological vector space $S$ to a Hilbert space $V$ given as a direct integral of the family $(V_z)_{z \in Z}$.
We say that $\alpha: S \to V$ is pointwise defined if there exists a family of morphisms $\curly{\alpha_z:S \to V_z}_{z \in Z}$ such that $\alpha(s)(z)= \alpha_z(s) $  $\nu$-almost everywhere 
for every $s \in S$.
\end{defn}
Assume $G$ is a second countable group of type I (see \cite[Definition 3.19]{Fuhr}).
The importance of this condition is that it implies 
the unitary representations of $G$ can be decomposed uniquely 
as a direct integral of representations over the unitary dual $\widehat G$ of $G$ (see \cite[Theorem 3.16]{Fuhr} or \cite[Section 8.2]{KS18}).
By a theorem of Bernstein, every reductive group over a non-discrete, non-Archimedean local field is of type I (\cite{Ber3}).

Given a unitary representation $\rho$ of $G$, it possesses a {\it central decomposition} as follows:
\begin{thm}[{\cite[Theorem 3.24]{Fuhr}}]
\label{thm:central decomposition}
Let $G$ be a group of type I and let $(V,\rho)$ be a unitary representation of $G$. 
There exist a standard Borel measure $\nu_\rho$ on $\widehat{G}$ and a multiplicity function $m_\rho : \widehat{G} \to \nats_0 \cup \curly{\infty}$ such that
\[\rho\simeq \int^{\oplus}_{\widehat G} m_{\rho}(\pi) \pi d\nu_\rho(\pi).\]
\end{thm}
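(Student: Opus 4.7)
The plan is to prove this via the classical von Neumann algebra machinery: reduce the decomposition of $\rho$ to the diagonalization of the center of the generated von Neumann algebra, and then use the type I hypothesis to identify the resulting fiber space with $\widehat{G}$.

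First, I would associate to the unitary representation $(V,\rho)$ the von Neumann algebra $\mathcal{M}:=\rho(G)''\subset B(V)$ (the bicommutant). Its center $\mathcal{Z}(\mathcal{M})=\mathcal{M}\cap \mathcal{M}'$ is an abelian von Neumann algebra acting on the separable Hilbert space $V$ (separability follows since $G$ is second countable and one reduces to cyclic subrepresentations). By the spectral theorem for abelian von Neumann algebras on separable Hilbert spaces, there is a standard Borel measure space $(Z,\nu)$ and a $*$-isomorphism $\mathcal{Z}(\mathcal{M})\simeq L^\infty(Z,\nu)$.

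Second, I would apply the von Neumann disintegration theorem: the identification of $\mathcal{Z}(\mathcal{M})$ with $L^\infty(Z,\nu)$ yields a measurable family of Hilbert spaces $(V_z)_{z\in Z}$ with $V\simeq \int^\oplus_Z V_z\,d\nu(z)$ in which $\mathcal{Z}(\mathcal{M})$ acts as the algebra of diagonalizable (i.e.\ pointwise scalar) operators. Because each element of $\rho(G)$ commutes with $\mathcal{Z}(\mathcal{M})$, one gets a pointwise-defined decomposition $\rho\simeq\int^\oplus_Z \rho_z\,d\nu(z)$ of unitary representations, and the construction forces each fiber $\rho_z$ to be a factor representation, i.e.\ $\rho_z(G)''$ has trivial center for $\nu$-a.e.\ $z$.

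Third, I would use the type I hypothesis to pass from factors to irreducibles. Since $G$ is of type I, every factor representation $\rho_z$ is quasi-equivalent to $m(z)\cdot\pi_z$ for a unique $\pi_z\in \widehat{G}$ and $m(z)\in\mathbb{N}_0\cup\{\infty\}$; the assignment $z\mapsto \pi_z$ can be arranged to be Borel measurable with respect to the Mackey Borel structure on $\widehat{G}$. Glimm's theorem (one of the characterizations of type I groups) guarantees that this Mackey Borel structure is standard Borel, so the pushforward $\nu_\rho:=(z\mapsto\pi_z)_*\nu$ is a standard Borel measure on $\widehat{G}$, and $m_\rho$ is defined as the corresponding pushforward of $m$ (summed along fibers if the map is not injective). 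Disintegrating along this map yields the desired $\rho\simeq\int^\oplus_{\widehat{G}} m_\rho(\pi)\pi\,d\nu_\rho(\pi)$.

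The main obstacle is precisely the last step: turning the abstract factorial fibers into a measurable selection of irreducibles over $\widehat{G}$. Outside of the type I world this fails badly, and the proof that type I implies Glimm's standardness of $\widehat G$, together with the existence of a measurable cross-section from factor representations to irreducibles, is the substantive content; the first two steps are formal consequences of spectral theory and disintegration for abelian von Neumann algebras on separable Hilbert spaces.
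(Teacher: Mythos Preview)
Your outline is the standard and essentially correct route to this result: disintegrate over the spectrum of the center of $\rho(G)''$ to obtain a direct integral of factor representations, then invoke the type~I hypothesis (via Glimm's theorem) to replace each factorial fiber by a multiple of an irreducible and to ensure the parameter space is the standard Borel space $\widehat G$. The one point worth tightening is the passage from the abstract base $(Z,\nu)$ to $\widehat G$: one should argue that the map $z\mapsto[\pi_z]$ is injective $\nu$-a.e.\ (this is what ``central'' decomposition means, since distinct points of $Z$ correspond to disjoint central projections, hence to disjoint, and therefore inequivalent, factor representations), so that no genuine fiberwise summation of multiplicities is needed and $\nu_\rho$ is simply the image measure.

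That said, the paper itself does not prove this statement at all: it is quoted verbatim as \cite[Theorem~3.24]{Fuhr} and used as a black box in Section~\ref{sec:Direct integral decompositions of unitary representations}. So there is no ``paper's own proof'' to compare against; your sketch supplies exactly the argument that the cited reference carries out in detail.
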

Let $\chi$ be a unitary character, and let $\nu_{W(\chi)}$ be the measure appearing in the central decomposition of $W(\chi):=L^2(H \backslash G,\chi^{-1}\delta)$ as in Theorem \ref{thm:central decomposition}. We call  $\nu_{W(\chi)}$ {\it the Plancherel measure} of $L^2(H \backslash G,\chi^{-1}\delta)$, and say that a representation $\rho \in \Irr{G}$ is {\it $(H\backslash G,\chi^{-1})$-tempered} 
if $\rho \in \supp(\nu_{W(\chi)})$. 
This is equivalent to 
 the existence of a functional $\xi \in (\rho^*)^{(H,\chi^{-1})}$ 
such that the matrix coefficient $m_{\xi,v}$ has certain good growth conditions for every $v \in \rho$ (see \cite[Page 666, Condition (***)]{Ber2}).
We finish this section with the next proposition, which is used to deduce the main result of Section \ref{sec:conversedirection}.
\begin{prop} 
[{See \cite[Section 2.3]{Ber2} for the unimodular case or \cite[Sections 3.2, 3.4 and 3.7]{Ber2} for the general one}]
\label{prop:Bernstein separation}
The natural embedding $\alpha: \ind_H^G \chi^{-1}\delta \to L^2(H \backslash G,\chi^{-1}\delta)$  is pointwise defined.
\end{prop}
\section{Definition of $\mathcal{H}_\chi$ and Gelfand-Kazhdan conditions imply $\mathcal{H}_\chi$ is commutative}
Let $H \leq G$ be l.c.t.d groups as in Section \ref{sec:conventions}.
Since $G$ is unimodular, $\delta_G=1$, and we have $\delta^2=\delta_{H \backslash G} = \delta_H^{-1}$.
\begin{defn}\label{def:Hecke}
We define the Hecke algebra to be $\mathcal H_\chi:=\mathrm{End}_G(\ind_H^G\chi^{-1} \delta)$.
\end{defn}

\begin{rem}
If $G$ and $H$ are finite then $\mathcal H_\chi \simeq \mathbb C [ G ]^{(H,\chi)\times(H,\chi^{-1})}$,
so commutativity of $\H_\chi$ is equivalent to $(G,H)$ being a $\chi$-Gelfand pair (Definition \ref{def:TwistedGelfand}).
This shows $\H_\chi$ is a good candidate to generalize the usual Hecke algebra $\complex[H \backslash G /H]$.
In the next section we will see this also holds in the case where $H$ is compact and $G$ arbitrary.
\end{rem}
{While the Gelfand-Kazhdan criterion is usually phrased in terms of generalized functions, for us it will be easier to use distributions.
Since $G$ is unimodular, 
choosing a Haar measure gives a (non-canonical) identification between these spaces.
\begin{lem}
$C^{-\infty}(G) \simeq \Dist(G)$.
\end{lem}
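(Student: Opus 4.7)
The plan is to exhibit a canonical bijection (after fixing a Haar measure) between the test space used to define $\Dist(G)$ and the one used to define $C^{-\infty}(G)$, then dualize. So fix once and for all a Haar measure $\nu_G$ on $G$, and let $\mathcal{M}_c^\infty(G)$ denote the space of locally constant, compactly supported measures on $G$, so that by definition $C^{-\infty}(G)=\mathcal{M}_c^\infty(G)^*$ while $\Dist(G)=C_c^\infty(G)^*$. I would define the map
\[
\Phi: C_c^\infty(G) \to \mathcal{M}_c^\infty(G), \qquad \Phi(f) = f\cdot \nu_G,
\]
and show it is a linear isomorphism; the desired statement then follows by transposing to obtain $\Phi^{*}: C^{-\infty}(G)\xrightarrow{\sim} \Dist(G)$.

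The verification of $\Phi$ being well-defined and injective is immediate from the fact that $\nu_G$ is a non-vanishing Haar measure and that the product of a locally constant compactly supported function with $\nu_G$ is a locally constant compactly supported measure. The substantive content is surjectivity: given $\mu\in\mathcal{M}_c^\infty(G)$, I need to produce $f\in C_c^\infty(G)$ such that $\mu = f\cdot \nu_G$. First I would pick a compact open subgroup $K\leq G$ small enough so that $\mu$ is constant on cosets $gK$ meeting $\supp(\mu)$ (such $K$ exists because $G$ is l.c.t.d.\ and $\mu$ is locally constant with compact support, so finitely many such cosets suffice). On each such coset $gK$, one has $\mu|_{gK} = c_g \nu_G|_{gK}$ for a scalar $c_g\in\complex$, and I would then define $f(x) = c_g$ for $x\in gK$ and $f(x)=0$ otherwise. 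By construction $f$ is locally constant (constant on $K$-cosets), compactly supported (its support lies inside the support of $\mu$), and satisfies $f\cdot \nu_G = \mu$.

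Finally, passing to duals is formal: $\Phi$ is a linear bijection of complex vector spaces, so $\Phi^{*}: \mathcal{M}_c^\infty(G)^{*}\to C_c^\infty(G)^{*}$ defined by $\langle \Phi^{*}(\xi),f\rangle = \langle \xi,\Phi(f)\rangle = \langle \xi, f\cdot \nu_G\rangle$ is the required linear isomorphism $C^{-\infty}(G)\simeq \Dist(G)$.

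The only real obstacle is the surjectivity step, and it is mild: it rests on the standard fact that in an l.c.t.d.\ group any locally constant compactly supported object is constant on the cosets of a sufficiently small compact open subgroup, which makes the Radon--Nikodym density with respect to $\nu_G$ an honest locally constant compactly supported function. Note also that the isomorphism is non-canonical, depending on the choice of $\nu_G$, as already remarked in the paragraph preceding the lemma.
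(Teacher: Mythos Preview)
Your proposal is correct and follows exactly the approach the paper indicates: the paper does not give a detailed proof of this lemma, only the one-line remark immediately preceding it that ``choosing a Haar measure gives a (non-canonical) identification between these spaces.'' Your argument is simply a careful unpacking of that sentence --- the map $f\mapsto f\cdot\nu_G$ and its transpose --- together with the standard l.c.t.d.\ fact that locally constant compactly supported objects are constant on cosets of a small compact open subgroup.
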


We  wish to give $\H_\chi$ a geometric description as a space of invariant distributions.
Recall ($\ind_H^G \chi)^* \simeq \Dist(G)^{(H,\chi \delta_H)}$ by Lemma \ref{lem:distquotisom}.
This gives rise to an isomorphism
\[ \left(\left(\ind_H^G( \chi^{-1}\delta) \otimes \ind_H^G( \chi \delta)\right)^*\right)^{\Delta G} \simeq 
\Dist(G \times G)^{(H,\chi^{-1} \delta^{-1}) \times (H, \chi \delta^{-1}) \times \Delta G}.\]
 Consider the map 
$i_\chi : \mathcal H_\chi  \to \left(\left(\ind_H^G( \chi^{-1}\delta) \otimes \ind_H^G( \chi \delta)\right)^*\right)^{\Delta G}$ 
 defined by
\[ \langle i_{\chi}(\tau),f_1 \otimes f_2 \rangle=\int\limits_{H\backslash G} \tau(f_1(g))f_2(g) d\mu_{H \backslash G}.\]
Note that since $f_1\otimes f_2 \in  \ind_H^G( \chi^{-1}\delta) \otimes \ind_H^G( \chi \delta)$, we get $f_1\cdot f_2 \in \ind_H^G \delta_{H\backslash G}$, on which we have a $G$-invariant functional (\cite[Theorem 1.21]{BZ}).
Furthermore, $\mathcal{H}_\chi$ is indeed embedded in the space above:

\begin{lem} \label{lem:HeckeInj}
The map $i_\chi$ is an injection.
\end{lem}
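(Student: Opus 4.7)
The plan is to prove injectivity contrapositively: assume $i_\chi(\tau) = 0$ for some $\tau \in \mathcal{H}_\chi$ and deduce $\tau = 0$. Unwinding the definition, $i_\chi(\tau) = 0$ says that
\[
\int_{H\backslash G} \tau(f_1)(g)\, f_2(g)\, d\mu_{H \backslash G} = 0
\]
for every $f_1 \in \ind_H^G(\chi^{-1}\delta)$ and $f_2 \in \ind_H^G(\chi\delta)$. The whole argument thus reduces to establishing the following non-degeneracy claim: the bilinear pairing
\[
B(\phi, f_2) := \int_{H\backslash G} \phi(g)\, f_2(g)\, d\mu_{H\backslash G}
\]
on $\ind_H^G(\chi^{-1}\delta) \times \ind_H^G(\chi\delta)$ is non-degenerate in the first argument. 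Granting this, taking $\phi = \tau(f_1)$ forces $\tau(f_1) = 0$ for every $f_1$, which gives $\tau = 0$.

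To prove non-degeneracy, I would take a nonzero $\phi \in \ind_H^G(\chi^{-1}\delta)$ and pick $g_0 \in G$ with $\phi(g_0) \neq 0$. Smoothness of $\phi$ gives a compact open subgroup $K \leq G$ such that $\phi \equiv \phi(g_0)$ on $g_0 K$. I would then construct a bump function $f_2 \in \ind_H^G(\chi\delta)$ supported on $H g_0 K'$ for some $K' \subseteq K$ to be chosen, with $f_2(h g_0 k) = \chi(h)\delta(h)$ for $h \in H$ and $k \in K'$. If such an $f_2$ exists, then a direct computation gives
\[
B(\phi, f_2) = \phi(g_0) \cdot \mu_{H\backslash G}(H g_0 K' / H) \neq 0,
\]
since $H g_0 K'$ is a non-empty open subset of $G$.

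The main obstacle is the well-definedness of $f_2$: consistency on overlaps $h_1 g_0 k_1 = h_2 g_0 k_2$ requires that $\chi\delta$ be trivial on $H \cap g_0 K' g_0^{-1}$. This is precisely where I would use that $G$ is totally disconnected and $\chi\delta$ is continuous: by shrinking $K'$ we make $g_0 K' g_0^{-1}$ arbitrarily small around the identity, and continuity of the character $\chi\delta$ on the compact open group $H \cap g_0 K' g_0^{-1}$ makes it eventually trivial there. Once this $K'$ is fixed, $f_2$ is unambiguously defined by the prescribed formula and extended by zero outside $H g_0 K'$, completing the argument.
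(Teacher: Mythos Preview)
Your proposal is correct and follows essentially the same approach as the paper: both arguments construct a bump function $f_2 \in \ind_H^G(\chi\delta)$ supported on a single set $Hg_0K$ to witness non-degeneracy of the pairing. Your treatment of the well-definedness of $f_2$ (shrinking $K'$ until $\chi\delta$ is trivial on $H \cap g_0 K' g_0^{-1}$) is in fact more careful than the paper's, which imposes the condition $g_0K \cap H = \varnothing$ without explaining how it yields consistency on overlaps.
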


\begin{proof}
Let $\tau \in \H_\chi$ be a non-zero element, and let $f_1 \in \ind_H^G \chi^{-1} \delta$ be a function such that 
$\tau(f_1) \neq 0$.
Take $e \neq g_0 \in G$ such that $\tau(f_1)(g_0) \neq 0$, define $f_2(g_0)=\overline{\tau(f_1)(g_0)}$ and extend it via $f_2(hg_0)=\chi(h)\delta(h)f_2(g_0)$ for every $h \in H$.
Now, take an open compact subgroup $K \subset G$ small enough such that $g_0 K  \cap H = \varnothing$ and such that $f_1(g_0K)=f_1(g_0)$, and set 
$f_2(g_0K)=f_2(g_0)$ and $f_2(x)=0$ if $x \notin Hg_0K$ (if $H$ is open take $K=\curly{e}$).
Clearly, $f_2 \in \ind_H^G \chi \delta$, and since $(\tau(f_1)f_2)|_{Hg_0K}$ is positive and vanishes outside $Hg_0K$ we are done.
\end{proof}

\begin{lem}\label{lem:IsomSpaces}
We have,
\begin{equation*}
 \left(\left(\ind_H^G( \chi^{-1}\delta) \otimes \ind_H^G( \chi \delta )\right)^*\right)^{\Delta G}
\simeq \Dist(G)^{(H,\chi^{-1} \delta^{-1}) \times (H, \chi \delta^{-1}) }. 
\tag{$*$}
\end{equation*}
\end{lem}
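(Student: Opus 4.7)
The plan is to realize both sides as distribution spaces on $G \times G$ and then descend to $G$ via the map $\pi : G \times G \to G$ given by $\pi(g_1, g_2) = g_1 g_2^{-1}$, whose fibers are exactly the orbits of the right diagonal action $(g_1, g_2) \cdot g = (g_1 g, g_2 g)$ of $\Delta G$.

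First I would apply Lemma \ref{lem:distquotisom} to each factor separately, using that $G$ is unimodular so $\delta^2 = \delta_H^{-1}$ and hence $\delta \cdot \delta_H = \delta^{-1}$, to obtain
\[
(\ind_H^G \chi^{-1}\delta)^* \simeq \Dist(G)^{(H, \chi^{-1}\delta^{-1})}, \qquad (\ind_H^G \chi\delta)^* \simeq \Dist(G)^{(H, \chi\delta^{-1})}.
\]
The essentially identical argument on $G \times G$ with subgroup $H \times H$ (using the map $\Phi \otimes \Phi$ built out of the two individual surjections in the proof of Lemma \ref{lem:distquotisom}) yields
\[
\bigl((\ind_H^G \chi^{-1}\delta) \otimes (\ind_H^G \chi\delta)\bigr)^* \simeq \Dist(G \times G)^{(H, \chi^{-1}\delta^{-1}) \times (H, \chi\delta^{-1})},
\]
where the two $H$-factors act on the left of the first and second $G$-coordinate respectively; the $\Delta G$-invariants on the left then correspond to distributions additionally invariant under the right diagonal $G$-action.

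The map $\pi$ is a trivial $\Delta G$-principal bundle (with section $z \mapsto (z, e)$), so integration along fibers / pullback gives an isomorphism $\pi^* : \Dist(G) \xrightarrow{\sim} \Dist(G \times G)^{\Delta G}$. The calculation $\pi(h_1 g_1, h_2 g_2) = h_1 g_1 g_2^{-1} h_2^{-1} = h_1 \pi(g_1, g_2) h_2^{-1}$ shows that $\pi$ intertwines the left $H \times H$-action on $G \times G$ with the two-sided action $z \mapsto h_1 z h_2^{-1}$ on $G$, so $\pi^*$ restricts to an isomorphism
\[
\Dist(G)^{(H, \chi^{-1}\delta^{-1}) \times (H, \chi\delta^{-1})} \xrightarrow{\sim} \Dist(G \times G)^{(H, \chi^{-1}\delta^{-1}) \times (H, \chi\delta^{-1}) \times \Delta G},
\]
and combining with the preceding step gives the claim. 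The one place requiring genuine care is the bookkeeping of signs when passing from a left $h_2$-action on the second $G$-coordinate to a right $h_2^{-1}$-action on the quotient $G$; I expect this to be the main (minor) obstacle, since one must verify that the convention used in the statement, which records the right $H$-character as $\chi\delta^{-1}$, is exactly what falls out of the descent, with the $\delta^{-1}$ absorbing the modular characters produced by Lemma \ref{lem:distquotisom}.
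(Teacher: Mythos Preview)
Your proposal is correct and is essentially the same argument as the paper's, just spelled out in more detail. The paper's one-line proof says to apply Lemma~\ref{lem:distquotisom} twice so that both sides become $\Dist(G\times G)^{(H,\chi^{-1}\delta^{-1})\times(H,\chi\delta^{-1})\times\Delta G}$: once for the pair $(G\times G,\,H\times H)$ to handle the left side, and once for the pair $(G\times G,\,\Delta G)$ to identify $\Dist(G)\simeq(\ind_{\Delta G}^{G\times G}1)^*\simeq\Dist(G\times G)^{\Delta G}$; your fiber integration along $\pi(g_1,g_2)=g_1g_2^{-1}$ with section $z\mapsto(z,e)$ is exactly this second application (cf.\ the map $\Phi_f(q)=\int_G f(qr,r)\,dr$ used later in the proof of Lemma~\ref{lem:inducedinvolutiononGxG}), and your character bookkeeping via $\delta\cdot\delta_H=\delta^{-1}$ is correct.
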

\begin{proof}
To prove the lemma we apply Lemma \ref{lem:distquotisom} twice to show that both spaces are isomorphic to 
\[\Dist(G\times G)^{(H,\chi^{-1} \delta^{-1}) \times (H, \chi \delta^{-1} ) \times \Delta G}.\]
\end{proof}

Assume the Gelfand-Kazhdan conditions  with respect to $\chi\delta$ hold (see Theorem \ref{prop:GK1}).
This means there exists an anti-involution $\sigma$ of $G$ preserving $H$ such that $\xi^\sigma=\xi$ for all distributions $\xi$ belonging to the right hand side of $(*)$. 
Further assume that $\chi$ satisfies $\chi^\sigma=\chi$, and recall $\delta^\mu = \delta$ by Lemma \ref{lem:MeasureUnderInv}, where 
$\mu(x)=\sigma(x^{-1})$. 

By the isomorphism above, $\sigma$ can be translated  to an involution $\theta$ on the left hand side of $(*)$.

\begin{prop} \label{prop:ThetaAction}
Let $\varphi \in \left(\left(\ind_H^G( \chi^{-1}\delta) \otimes \ind_H^G( \chi \delta )\right)^*\right)^{\Delta G} $. The involution $\theta$ induced by $\sigma$ acts as follows:
\[\langle\varphi^\theta,f_1 \otimes f_2  \rangle=\langle\varphi,f_2^{\mu} \otimes f_1^{ \mu} \rangle.\]
\end{prop}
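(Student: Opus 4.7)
My plan is to first verify that the formula is well-defined (producing an element of the correct space), then translate the action of $\sigma$ on $\Dist(G)$ through the isomorphism of Lemma \ref{lem:IsomSpaces} and check that it matches the claimed formula. For well-definedness, if $f_2 \in \ind_H^G(\chi\delta)$, then for $h \in H$,
\[
f_2^\mu(hg) = f_2(\mu(h)\mu(g)) = \chi(\mu(h))\,\delta(\mu(h))\, f_2^\mu(g) = \chi^{-1}(h)\,\delta(h)\, f_2^\mu(g),
\]
using that $\mu(H)=H$ is a group automorphism, that $\chi^\mu = \chi^{-1}$ (which follows from $\chi^\sigma = \chi$ together with $\mu(h) = \sigma(h)^{-1}$), and that $\delta^\mu = \delta$ (since the modular character is preserved by automorphisms of $H$). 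Hence $f_2^\mu \in \ind_H^G(\chi^{-1}\delta)$, and symmetrically $f_1^\mu \in \ind_H^G(\chi\delta)$. The $\Delta G$-invariance of $\varphi^\theta$ follows from the identity $(g\cdot f)^\mu = \mu(g)\cdot f^\mu$ together with $\Delta G$-invariance of $\varphi$, and $\theta^2 = \mathrm{Id}$ is immediate from $\mu^2 = \mathrm{Id}$ and the double swap.

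For the identification of $\theta$ with $\sigma$, I would use the two-variable form of the isomorphism in Lemma \ref{lem:IsomSpaces}: given $\varphi$, the corresponding distribution $\xi_\varphi \in \Dist(G\times G)^{(H,\chi^{-1}\delta^{-1})\times(H,\chi\delta^{-1})\times \Delta G}$ is characterized by
\[
\langle \xi_\varphi, \phi_1 \otimes \phi_2 \rangle = \langle \varphi, \Phi_{\phi_1,\chi^{-1}\delta} \otimes \Phi_{\phi_2,\chi\delta} \rangle,
\]
with $\Phi$ as in the proof of Lemma \ref{lem:distquotisom}. The crucial computation is a change of variables $h \mapsto \mu(h)$ in the defining integral, valid by Lemma \ref{lem:MeasureUnderInv} applied to the involution $\mu|_H$. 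Combined with $\chi^\mu = \chi^{-1}$ and $\delta^\mu = \delta$, this yields the pair of identities
\[
\Phi_{\phi,\chi\delta}^{\,\mu} = \Phi_{\phi^\mu,\chi^{-1}\delta}, \qquad \Phi_{\phi,\chi^{-1}\delta}^{\,\mu} = \Phi_{\phi^\mu,\chi\delta}.
\]
Inserting them in the definition of $\varphi^\theta$ yields $\langle \xi_{\varphi^\theta}, \phi_1 \otimes \phi_2 \rangle = \langle \xi_\varphi, \phi_2^\mu \otimes \phi_1^\mu \rangle$, so $\theta$ corresponds to precomposition of $\xi_\varphi$ with the map $T\colon (g_1, g_2) \mapsto (\mu(g_2), \mu(g_1))$ on $G \times G$.

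To conclude, I would observe that $T$ descends through the quotient $q\colon G\times G \to G$, $q(g_1, g_2) = g_1 g_2^{-1}$ (invariant under the diagonal right $G$-action and realizing the identification used in Lemma \ref{lem:IsomSpaces}), to the anti-involution $\sigma$: indeed
\[
q(\mu(g_2), \mu(g_1)) = \mu(g_2)\mu(g_1)^{-1} = \mu(g_2 g_1^{-1}) = \sigma(g_1 g_2^{-1}) = \sigma(q(g_1, g_2)),
\]
using the anti-homomorphism property of $\sigma$. Hence $\xi_{\varphi^\theta}$ projects to $\xi_\varphi^\sigma$ on $\Dist(G)$, which is precisely the $\sigma$-action on the right-hand side of $(*)$, proving the formula. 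The main hurdle is bookkeeping through the chain of identifications and keeping the twists $\chi\delta$ versus $\chi^{-1}\delta$ straight; the underlying computations are elementary changes of variables relying on Lemma \ref{lem:MeasureUnderInv} and the hypotheses $\chi^\sigma = \chi$ and $\sigma(H) = H$.
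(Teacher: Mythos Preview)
Your proposal is correct and follows essentially the same route as the paper: the paper also splits the argument into two parts, first computing that $\sigma$ on $\Dist(G)$ induces $(f_1\otimes f_2)\mapsto(f_2^\mu\otimes f_1^\mu)$ on $\Dist(G\times G)^{\Delta G}$ (your quotient-map argument via $q(g_1,g_2)=g_1g_2^{-1}$ is the conceptual version of the paper's explicit integral computation $\Phi_{f_2^\mu\otimes f_1^\mu}=\Phi_{f_1\otimes f_2}^\sigma$), and second verifying the identity $\Phi_{f^\mu,\psi}=\Phi^\mu_{f,\psi^\mu}$ to pass through the $H$-averaging. The only difference is order of presentation and your added well-definedness check, which the paper omits.
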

We divide the proof into two lemmas, first passing through the involution $\tilde{\theta}$ induced on the space $\Dist(G \times G)^{  \Delta G}$, and then showing that $\tilde{\theta}$ gives rise to the involution $\theta$ as above.
\begin{lem} \label{lem:inducedinvolutiononGxG}
The anti-involution $\sigma$ on $\Dist(G)$ induces the following on $\Dist(G \times G)^{  \Delta G}$:
\[\langle \varphi^{\tilde \theta},f_1 \otimes f_2\rangle=\langle \varphi ,f_2^{\mu} \otimes f_1^{\mu} \rangle.\]
\end{lem}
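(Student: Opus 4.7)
The plan is to unwind the identification $\Dist(G\times G)^{\Delta G} \simeq \Dist(G)$ coming from diagonal reduction, and transport the $\sigma$-action across it. Concretely, the map $\Psi: G\times G \to G$ given by $(g_1, g_2) \mapsto g_1 g_2^{-1}$ is a $\Delta G$-invariant submersion (with respect to the right diagonal action) whose fibers are precisely the $\Delta G$-orbits, so pullback of distributions yields an isomorphism $\Psi^*: \Dist(G) \xrightarrow{\sim} \Dist(G\times G)^{\Delta G}$. This is the isomorphism implicit in the statement of the lemma, and so the content of the lemma is the computation of $\sigma$ transported across $\Psi^*$.

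Next I would exhibit a lift of $\sigma$ to $G\times G$, namely the map $S: (g_1, g_2)\mapsto (\mu(g_2),\mu(g_1))$, and verify the compatibility $\Psi\circ S = \sigma \circ \Psi$. This is a direct calculation: using $\mu(x)=\sigma(x)^{-1}$ together with the anti-homomorphism property of $\sigma$, one has
\[
\Psi(S(g_1,g_2))=\mu(g_2)\mu(g_1)^{-1}=\sigma(g_2)^{-1}\sigma(g_1)=\sigma(g_1g_2^{-1})=\sigma(\Psi(g_1,g_2)).
\]
Also, $S^2=\mathrm{Id}$ because $\mu$ is an involution, so $S$ is an involution of $G\times G$ preserving the Haar measure by Lemma \ref{lem:MeasureUnderInv}, and therefore acts on $\Dist(G\times G)$ by the standard formula $\langle \varphi^S, F\rangle = \langle \varphi, F\circ S\rangle$.

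By functoriality of pullback, the relation $\Psi\circ S=\sigma\circ \Psi$ implies that the involutions $\sigma$ on $\Dist(G)$ and $S$ on $\Dist(G\times G)^{\Delta G}$ correspond under $\Psi^*$. To compute the induced action $\tilde\theta$ on a pure tensor, it then suffices to evaluate $S$ on $f_1\otimes f_2$:
\[
\bigl((f_1\otimes f_2)\circ S\bigr)(g_1,g_2)=f_1(\mu(g_2))\,f_2(\mu(g_1))=(f_2^{\mu}\otimes f_1^{\mu})(g_1,g_2),
\]
which produces precisely the claimed formula $\langle \varphi^{\tilde\theta}, f_1\otimes f_2\rangle=\langle\varphi, f_2^\mu\otimes f_1^\mu\rangle$.

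The main obstacle is essentially bookkeeping: identifying the correct lift $S$ of $\sigma$ and matching the orientation conventions (left vs.\ right action of $\Delta G$, and the order of the two factors after applying $\mu$). Once the compatibility $\Psi\circ S=\sigma\circ \Psi$ is in hand, everything else is formal, since unimodularity of $G$ and the involutivity of $\sigma$ and $\mu$ eliminate any Jacobian/modular factors that might otherwise complicate the comparison of pullback and pushforward of distributions.
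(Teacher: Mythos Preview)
Your argument is correct and follows the same underlying idea as the paper: identify $\Dist(G\times G)^{\Delta G}$ with $\Dist(G)$ via the quotient by the right diagonal action, and transport $\sigma$ across. The only difference is packaging. The paper realizes the identification through the explicit integral transform of Lemma~\ref{lem:distquotisom}, namely $\Phi_f(q)=\int_G f(qr,r)\,dr$, and then verifies $\Phi_{f_2^\mu\otimes f_1^\mu}=\Phi_{f_1\otimes f_2}^\sigma$ by two changes of variables (one invoking Lemma~\ref{lem:MeasureUnderInv}, the other unimodularity of $G$). You instead lift $\sigma$ to the involution $S(g_1,g_2)=(\mu(g_2),\mu(g_1))$ on $G\times G$ and check the single compatibility $\Psi\circ S=\sigma\circ\Psi$, so the result follows by functoriality of pullback without ever writing an integral. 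Your route is slightly more streamlined; the paper's has the advantage of plugging directly into the distributional framework already set up in Lemma~\ref{lem:distquotisom}, which is reused in the next lemma.
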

\begin{proof} 
Consider the isomorphism 
 $\Psi: \Dist(G) 
 \simeq (\ind_{\Delta G}^{G \times G} 1)^* 
 \xrightarrow{\sim} \Dist(G \times G)^{\Delta G}$ as in Lemma \ref{lem:distquotisom}, 
\[\angles{\Psi(\eta),f}=\angles{\eta,  \Phi_{f}},\] 
where $\Phi_f(q)=\int\limits_{ G} f(qr,r)dr$  (here the $G\times G$-action on $\ind_{\Delta G}^{G \times G} 1$  is on the left).
We have,
\begin{gather*}
\angles{\Psi(\eta^\sigma),f_1 \otimes f_2}=\angles{\eta,\Phi_{f_1 \otimes f_2}^\sigma}, \\
\angles{\Psi(\eta)^{\tilde \theta},f_1 \otimes f_2}=\angles{\Psi(\eta),f_2^\mu \otimes f_1^\mu} =\angles{\eta,\Phi_{f_2^\mu \otimes f_1^\mu}}.
\end{gather*}
Thus it is enough to show that $\Phi_{f_2^\mu \otimes f_1^\mu}=\Phi_{f_1 \otimes f_2}^\sigma$. We calculate and get the following:
\begin{flalign*}
\Phi_{f_2^\mu \otimes f_1^\mu}(q)
&=\int\limits_{ G} f_2^\mu \otimes f_1^\mu (qr,r)dr
=\int\limits_{ G} f_1 \otimes f_2 (\mu(r),\mu(qr))dr.
\end{flalign*}
Substitute $r'= \mu(r)=\sigma(r^{-1})$. 
Since $\mu$ is an involution, by Lemma \ref{lem:MeasureUnderInv} we have $\mu_* dr = dr'$. 
We are now finished by the following:
\begin{flalign*}
&\int\limits_{ G} f_1 \otimes f_2 (r',\mu(q)r')dr'=\int\limits_{ G} f_1 \otimes f_2 (\sigma(q)r',r')dr'
=\Phi_{f_1 \otimes f_2}^\sigma(q).
\end{flalign*}
\end{proof}

\begin{lem}
The involution $\tilde \theta$ on $\Dist(G\times G)^{(H,\chi^{-1} \delta^{-1}) \times (H, \chi \delta^{-1}) \times \Delta G}$ induces the involution $\theta$ on $\left(\left(\ind_H^G( \chi^{-1}\delta ) \otimes \ind_H^G( \chi \delta)\right)^*\right)^{\Delta G}$.
\end{lem}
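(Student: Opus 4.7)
The plan is to trace the involution $\tilde\theta$ through the explicit realization of the isomorphism of Lemma \ref{lem:IsomSpaces} and match the result with the formula in Proposition \ref{prop:ThetaAction}. Recall that by Lemma \ref{lem:distquotisom} the isomorphism is induced, on each tensor factor, by the surjection $\Phi_{(-),\psi}: C_c^\infty(G) \to \ind_H^G \psi$, where $\Phi_{\phi,\psi}(g) = \int_H \phi(hg) \psi^{-1}(h) d\nu_H$. Applying this once to each side and tensoring yields an explicit map
\[
\Psi_\otimes \colon \left(\left(\ind_H^G(\chi^{-1}\delta) \otimes \ind_H^G(\chi\delta)\right)^*\right)^{\Delta G} \;\xrightarrow{\sim}\; \Dist(G \times G)^{(H,\chi^{-1}\delta^{-1}) \times (H,\chi\delta^{-1}) \times \Delta G},
\]
given by $\langle \Psi_\otimes(\varphi), \phi_1 \otimes \phi_2\rangle = \langle \varphi, \Phi_{\phi_1,\chi^{-1}\delta} \otimes \Phi_{\phi_2,\chi\delta}\rangle$. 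By construction, $\theta$ is the unique involution for which $\Psi_\otimes(\varphi^\theta) = \Psi_\otimes(\varphi)^{\tilde\theta}$, and by Lemma \ref{lem:inducedinvolutiononGxG} the right-hand side, paired with $\phi_1 \otimes \phi_2$, equals $\langle \varphi, \Phi_{\phi_2^\mu,\chi^{-1}\delta} \otimes \Phi_{\phi_1^\mu,\chi\delta}\rangle$.

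The first step is to establish the compatibility of the construction $\Phi_{(-),\psi}$ with the involution $\mu$: for any character $\psi$ of $H$ and any $\phi \in C_c^\infty(G)$,
\[
\Phi_{\phi,\psi}^\mu = \Phi_{\phi^\mu,\psi^\mu},
\]
where $\psi^\mu(h) = \psi(\mu(h))$. This is a short direct calculation: expand $\Phi_{\phi,\psi}(\mu(g)) = \int_H \phi(h\mu(g)) \psi^{-1}(h) d\nu_H$, use that $\mu$ is a group homomorphism to rewrite $\phi(h\mu(g)) = \phi^\mu(\mu(h) g)$, and then substitute $h' = \mu(h)$, noting that $\mu_* d\nu_H = d\nu_H$ by the argument of Lemma \ref{lem:MeasureUnderInv} applied to $\mu|_H$ (which is an involution of the locally compact group $H$, since $\sigma(H) = H$).

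The second step is to identify how the relevant characters transform under $\mu$. By hypothesis $\chi^\sigma = \chi$; since $\sigma(h) = \mu(h)^{-1}$, this translates to $\chi^\mu = \chi^{-1}$. Combined with $\delta^\mu = \delta$ (given), one obtains
\[
(\chi^{-1}\delta)^\mu = \chi\delta \qquad \text{and} \qquad (\chi\delta)^\mu = \chi^{-1}\delta,
\]
so that $\mu$ interchanges the characters attached to the two tensor factors. Substituting back into the identity of the previous step gives $\Phi_{\phi,\chi^{-1}\delta}^\mu = \Phi_{\phi^\mu,\chi\delta}$ and $\Phi_{\phi,\chi\delta}^\mu = \Phi_{\phi^\mu,\chi^{-1}\delta}$, which says precisely that the involution $(-)^\mu$ swaps the roles of the two induced spaces at the level of representative functions $\Phi_{\phi,\psi}$.

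With these identities in hand, the verification is routine: plugging $f_i = \Phi_{\phi_i,\cdot}$ into the candidate formula $\langle \varphi^\theta, f_1 \otimes f_2\rangle = \langle \varphi, f_2^\mu \otimes f_1^\mu\rangle$ yields $\langle \varphi, \Phi_{\phi_2^\mu,\chi^{-1}\delta} \otimes \Phi_{\phi_1^\mu,\chi\delta}\rangle$, which coincides with the expression for $\Psi_\otimes(\varphi)^{\tilde\theta}$ computed above; hence $\Psi_\otimes(\varphi^\theta) = \Psi_\otimes(\varphi)^{\tilde\theta}$ and we conclude. The main obstacle is purely bookkeeping: keeping track of the twists by $\chi$, $\delta$, and $\delta_H$ across the two applications of Lemma \ref{lem:distquotisom}, and verifying that $\mu$ indeed maps each induced space to the other one—once the identities $\chi^\mu = \chi^{-1}$ and $\delta^\mu = \delta$ are isolated, this reduces to a one-line substitution.
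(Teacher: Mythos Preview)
Your proof is correct and follows essentially the same route as the paper: realize the isomorphism via $\Psi_\otimes$ built from the surjections $\Phi_{(-),\psi}$, establish the key identity $\Phi_{\phi,\psi}^\mu=\Phi_{\phi^\mu,\psi^\mu}$ (the paper writes the equivalent form $\Phi_{f^\mu,\psi}=\Phi^\mu_{f,\psi^\mu}$), and then use $\chi^\sigma=\chi$ together with $\delta^\mu=\delta$ to see that $\mu$ swaps the two induced factors. The only cosmetic difference is that the paper leaves the conversion $\chi^\sigma=\chi \Leftrightarrow \chi^\mu=\chi^{-1}$ implicit, whereas you spell it out.
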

\begin{proof}
As before have a map, 
$\Psi: ((\ind_H^G( \chi^{-1}\delta) \otimes \ind_H^G( \chi \delta))^*)^{\Delta G} 
\to \Dist(G\times G)^{(H,\chi^{-1} \delta^{-1}) \times (H, \chi \delta^{-1}) \times \Delta G}$
by 
\[\angles{\Psi(\xi),f_1 \otimes f_2} = \angles{\xi, \Phi_{f_1, \chi^{-1} \delta} \otimes \Phi_{f_2, \chi \delta}}\]
where $\Phi_{f, \psi}(g)=\int\limits_H f(hg)\psi^{-1}(h)dh $ as in Lemma \ref{lem:distquotisom} and $\psi\in \curly{\chi^{-1}\delta,\chi\delta}$.
Now, expanding,
\begin{flalign*}
\langle \Psi(\xi)^{ \tilde \theta},f_1 \otimes f_2\rangle &
= \langle \Psi(\xi),f_2^{\mu} \otimes f_1^{\mu}\rangle 
= \angles{\xi, \Phi_{f_2^\mu, \chi^{-1} \delta} \otimes \Phi_{f_1^\mu, \chi \delta}}.
\end{flalign*}
For every character $\psi$ of $H$ and $f \in C_c^\infty(G)$, it holds that,
\[
\Phi_{f^\mu,\psi}(x)
=\int\limits_H f(\mu(h)\mu(x))\psi^{-1}(h)dh
=\int\limits_H f(h\mu(x))\psi^{-1}(\mu(h))dh
=\Phi^\mu_{f,\psi^\mu},
\]
where $\mu_*(dh)=dh$ by Lemma \ref{lem:MeasureUnderInv}.
Back to our calculation, using the relations $\delta^\mu=\delta$ and $\chi^\sigma =\chi$
we conclude the desired statement:
\begin{flalign*}
\angles{\xi, \Phi_{f_2^\mu, \chi^{-1} \delta} \otimes \Phi_{f_1^\mu, \chi \delta}}
&=\angles{\xi, \Phi^\mu_{f_2, (\chi^{-1} \delta)^\mu} \otimes \Phi^\mu_{f_1, (\chi \delta)^\mu}}
=\angles{\xi^\theta, \Phi_{f_1, \chi^{-1} \delta} \otimes \Phi_{f_2, \chi \delta}} 
=\angles{\Psi(\xi^\theta), f_1 \otimes f_2}.
\end{flalign*}
\end{proof}
Using Proposition \ref{prop:ThetaAction} we can now show the commutativity of $\H_\chi$.
Recall we assume $\chi^\sigma = \chi$.
\begin{thm} \label{thm:HeckeCommutative}
Assume the conditions of the Gelfand-Kazhdan criterion hold with $\psi=\chi^{-1}\delta$ (Theorem \ref{prop:GK1}), i.e. there exists an anti-involution $\sigma$ of $G$
such that $\sigma(H)=H$ and  $\xi^\sigma= \xi $ for every $\xi \in \Dist(G)^{(H,\chi^{-1}\delta^{-1}) \times (H,\chi^\sigma\delta^{-1})}$.
Then $\H_\chi$ is commutative.
\end{thm}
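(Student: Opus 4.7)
My strategy is to translate the Gelfand--Kazhdan invariance hypothesis into a single symmetry identity for elements in the image of $i_\chi$, and then to apply this identity in three chained steps to show that $i_\chi(\tau_1\tau_2)=i_\chi(\tau_2\tau_1)$ for all $\tau_1,\tau_2\in\H_\chi$. The injectivity of $i_\chi$ (Lemma~\ref{lem:HeckeInj}) will then yield commutativity.

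\textbf{Step 1: extracting a symmetry on $i_\chi(\H_\chi)$.} Combining Lemma~\ref{lem:IsomSpaces} with Proposition~\ref{prop:ThetaAction}, the hypothesis $\xi^\sigma=\xi$ translates into $\varphi^\theta=\varphi$ for every $\varphi\in\left(\left(\ind_H^G(\chi^{-1}\delta)\otimes\ind_H^G(\chi\delta)\right)^*\right)^{\Delta G}$. Applying this with $\varphi=i_\chi(\tau)$ for $\tau\in\H_\chi$, and then making the change of variable $g\mapsto\mu(g)$ on the right-hand side of the resulting integral identity, produces the key symmetry
\[
(\star)\qquad \int_{H\backslash G}\tau(f_1)(g)\,f_2(g)\,d\mu_{H\backslash G}=\int_{H\backslash G}\tau(f_2^\mu)^\mu(g)\,f_1(g)\,d\mu_{H\backslash G},
\]
valid for every $\tau\in\H_\chi$, $f_1\in\ind_H^G\chi^{-1}\delta$, and $f_2\in\ind_H^G\chi\delta$. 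The standing assumption $\chi^\sigma=\chi$ together with $\delta^\mu=\delta$ ensures that $f^\mu\in\ind_H^G\chi\delta$ whenever $f\in\ind_H^G\chi^{-1}\delta$, so that the integrands in $(\star)$ land in $\ind_H^G\delta_{H\backslash G}$ and the pairings make sense.

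\textbf{Step 2: chaining three applications of $(\star)$.} Fix $\tau_1,\tau_2\in\H_\chi$. Apply $(\star)$ with $\tau=\tau_1$ and with $f_1$ replaced by $\tau_2(f_1)\in\ind_H^G\chi^{-1}\delta$ to obtain
\[
\int\tau_1\tau_2(f_1)\cdot f_2 \;=\; \int\tau_1(f_2^\mu)^\mu\cdot\tau_2(f_1).
\]
View the right-hand side as the pairing of $\tau_2(f_1)$ against $\tau_1(f_2^\mu)^\mu\in\ind_H^G\chi\delta$ and apply $(\star)$ again with $\tau=\tau_2$, with $f_2$ replaced by $\tau_1(f_2^\mu)^\mu$; using the involutivity $\mu^2=\mathrm{Id}$ to simplify $(\tau_1(f_2^\mu)^\mu)^\mu=\tau_1(f_2^\mu)$, this converts the expression into $\int(\tau_2\tau_1)(f_2^\mu)^\mu\cdot f_1$. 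A third application of $(\star)$ with $\tau=\tau_2\tau_1$ rewrites this last integral as $\int\tau_2\tau_1(f_1)\cdot f_2$. Concatenating the three identities yields $i_\chi(\tau_1\tau_2)=i_\chi(\tau_2\tau_1)$, and Lemma~\ref{lem:HeckeInj} then concludes $\tau_1\tau_2=\tau_2\tau_1$.

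\textbf{Main obstacle.} The only subtle point is the change-of-variable justification in Step~1: one must verify that the invariant functional on $\ind_H^G\delta_{H\backslash G}$ is preserved by $F\mapsto F\circ\mu$. This reduces, via the usual unfolding $\int_G=\int_{H\backslash G}\int_H$, to the $\mu$-invariance of Haar measure on $G$ and on $H$ (two instances of Lemma~\ref{lem:MeasureUnderInv}), together with $\delta_H^\mu=\delta_H$ (which follows since $\mu|_H$ is an automorphism of $H$). Conceptually, the whole argument is the natural extension of the finite-group Gelfand trick: $\sigma$ induces an anti-automorphism $\tau\mapsto\tau^\flat$ of $\H_\chi$ whose action is precisely encoded in $(\star)$, and the Gelfand--Kazhdan hypothesis forces $\tau^\flat=\tau$ for every $\tau$, which is equivalent to commutativity.
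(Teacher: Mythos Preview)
Your proof is correct and follows essentially the same route as the paper's. Both arguments embed $\H_\chi$ into the space of $\Delta G$-invariant distributions via $i_\chi$, use the Gelfand--Kazhdan hypothesis to obtain the $\theta$-invariance $i_\chi(\tau)^\theta=i_\chi(\tau)$, and then chain this invariance several times to conclude $i_\chi(\tau_1\tau_2)=i_\chi(\tau_2\tau_1)$. The only organisational difference is that you first absorb the change of variable $g\mapsto\mu(g)$ into a single packaged identity $(\star)$ and apply it three times, whereas the paper works directly with $\theta$-invariance and applies it four times (once for each of $\tau_1$, $\mathrm{Id}$, $\tau_2$, $\tau_2\tau_1$); your separate justification that the invariant functional on $\ind_H^G\delta_{H\backslash G}$ is $\mu$-invariant is exactly what the paper obtains as the special case $\tau=\mathrm{Id}$ of the hypothesis.
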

\begin{proof}
Take $\tau_1,\tau_2 \in \mathcal H_\chi=\mathrm{End}_G(\ind_H^G \chi^{-1}\delta)$. 
Since $i_\chi$ is injective, 
 it is enough to show that $i_\chi(\tau_1 \circ \tau_2) = i_\chi(\tau_2 \circ \tau_1)$, and since every $\xi$ in $\Dist(G)^{(H,\chi^{-1}\delta^{-1}) \times (H,\chi^\sigma\delta^{-1})}$ is fixed under $\sigma$,
for every $\tau \in\H_\chi$ we have $i_\chi(\tau)^\theta=i_\chi(\tau)$ where $\theta$ is as in Proposition \ref{prop:ThetaAction}.
The statement is now reduced to the following calculation:
\begin{flalign*}
\langle i_\chi (\tau_1 \circ \tau_2),f_1 \otimes f_2\rangle &
= \langle i_\chi (\tau_1), \tau_2(f_1) \otimes f_2\rangle
 =\langle i_\chi (\tau_1)^\theta , \tau_2(f_1) \otimes f_2\rangle \\
&= \langle i_\chi (\tau_1) ,  f_2^\mu \otimes \tau_2(f_1)^\mu \rangle 
=\langle i_\chi (\mathrm{Id}) ,  \tau_1(f_2^\mu) \otimes \tau_2(f_1)^\mu \rangle 
\\
& = \langle i_\chi (\mathrm{Id}) , \tau_2(f_1) \otimes  {\tau_1(f_2^\mu)}^\mu \rangle 
= \langle i_\chi (\tau_2) ,f_1 \otimes  {\tau_1(f_2^\mu)}^\mu \rangle\\
& 
=\langle i_\chi (\tau_2) , {\tau_1(f_2^\mu)} \otimes f_1^\mu\rangle 
 =\langle i_\chi (\tau_2 \circ \tau_1) , f_2^\mu \otimes f_1^\mu\rangle  \\
 &
= \langle i_\chi (\tau_2 \circ \tau_1) , f_1 \otimes f_2 \rangle.
 \end{flalign*}
\end{proof}
\section{$\mathcal{H}_\chi$ commutative implies $(G,H)$ is a $\chi$-Gelfand pair - geometric cases}
Let $\mathcal H_\chi:=\End_G(\ind_H^G \chi^{-1} \delta)$ be as in Definition \ref{def:Hecke} and 
recall $G$ is an l.c.t.d unimodular group with a closed, not necessarily unimodular subgroup $H$.
In this section we prove Theorem \ref{thm:geometric cases}.
We start by showing that $\H_\chi$ generalizes the Hecke algebra from the classical case, that is for compact $H$ the pair $(G,H)$ is a $\chi$-Gelfand pair $\iff$ $\H_\chi$ is commutative.

Let $A(G)$ be the algebra of smooth, compactly supported measures on $G$, and recall that for compact $K$ the 
pair $(G,K)$ is a $\chi$-Gelfand pair if and only if $A(G)^{(K,\chi) \times (K,\chi^{-1})}$ is commutative (see Section \ref{subsub:CompactSubgroup} for $\chi=1$, general $\chi$ is similar).

\begin{prop}
Assume $K:=H$ is compact.
We have $A(G)^{(K,\chi) \times (K,\chi^{-1})} \hookrightarrow \H_\chi$ as algebras.
If furthermore $K$ is open, then this map is an isomorphism.
\end{prop}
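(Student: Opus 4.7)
The plan is to produce an explicit algebra homomorphism $\Phi$ from the measure algebra $A(G)^{(K,\chi)\times(K,\chi^{-1})}$ into $\H_\chi$ by having a bi-equivariant measure act on $\ind_K^G\chi^{-1}$ by convolution, and then to show that $\Phi$ is always injective and, when $K$ is open, also surjective. Since $G$ is unimodular and $K$ is compact we have $\delta=1$, so $\H_\chi=\End_G(\ind_K^G\chi^{-1})$ with $G$ acting by right translation. I would define $\Phi(\mu)(f)(g)=\int_G f(y^{-1}g)\,d\mu(y)$. The fact that this commutes with right translation is immediate and gives the $G$-equivariance; the left $(K,\chi)$-equivariance of $\mu$ is exactly what is needed for $\Phi(\mu)(f)$ to inherit the $\chi^{-1}$ left equivariance that places it back in $\ind_K^G\chi^{-1}$; and the right $(K,\chi^{-1})$-equivariance of $\mu$ is what makes $A(G)^{(K,\chi)\times(K,\chi^{-1})}$ closed under convolution and makes $\Phi$ an algebra homomorphism via associativity of convolution.

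For injectivity, suppose $\Phi(\mu)=0$. The strategy is to localize: for any point $g_0\in G$ and any sufficiently small open compact neighborhood $U$ of $e$, I would construct $f\in\ind_K^G\chi^{-1}$ supported in a small neighborhood of $Kg_0^{-1}$ (take an indicator function, average against $\chi^{-1}$ over $K$, and translate), and read off the value of $\mu$ near $g_0$ from $(\mu * f)$ evaluated at a suitable point. As $U$ shrinks this recovers $\mu$ pointwise near $g_0$; varying $g_0$ forces $\mu=0$.

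For surjectivity when $K$ is open, the key point is that $\ind_K^G\chi^{-1}$ is generated as a $G$-module by the distinguished element $e_\chi$ supported on $K$ with $e_\chi(k)=\chi^{-1}(k)$: any $f\in\ind_K^G\chi^{-1}$ has support in a finite union of right $K$-cosets (openness of $K$ plus compactness of support) and decomposes as a finite sum of right $G$-translates of $e_\chi$. By Frobenius reciprocity (Lemma \ref{lem:standardfacts}(4), which is exactly the place where openness of $K$ enters), the map $T\mapsto\psi_T:=T(e_\chi)$ identifies $\End_G(\ind_K^G\chi^{-1})$ with the subspace of $\ind_K^G\chi^{-1}$ on which $K$ acts on the right through $\chi^{-1}$; combined with the automatic left $\chi^{-1}$-equivariance of $\psi_T$ as an element of $\ind_K^G\chi^{-1}$, this makes $\psi_T$ bi-$K$-equivariant in the precise way required. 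Turning $\psi_T$ into a measure $\mu_T=\psi_T\cdot dg$ using Haar measure produces an element of $A(G)^{(K,\chi)\times(K,\chi^{-1})}$; since $\Phi(\mu_T)$ and $T$ agree on the generator $e_\chi$ and both are $G$-equivariant, they coincide.

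The main obstacle I expect will be the bookkeeping of left-versus-right character equivariances: four conventions (the bi-equivariance in $A(G)^{(K,\chi)\times(K,\chi^{-1})}$, the transformation law in $\ind_K^G\chi^{-1}$, the right-translation $G$-action, and the convolution formula) must all line up so that the first paragraph actually goes through and so that the identification $\psi_T\leftrightarrow\mu_T$ lands in the correct isotypic component. The reason surjectivity genuinely fails without openness is conceptually clear: $e_\chi$ stops being a $G$-generator when $K$ is merely compact, and with no such generator the image of $\Phi$ can be strictly smaller than the full endomorphism algebra.
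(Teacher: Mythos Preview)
Your proposal is correct and follows essentially the same route as the paper: define the convolution action of bi-equivariant measures on $\ind_K^G\chi^{-1}$, check it is a well-defined $G$-endomorphism, prove injectivity, and invoke Frobenius reciprocity (Lemma \ref{lem:standardfacts}(4)) for surjectivity when $K$ is open. The only substantive difference is in the injectivity step: the paper observes that both $A(G)^{(K,\chi)\times(K,\chi^{-1})}$ and $\H_\chi$ embed compatibly into $\Dist(G)^{(K,\chi)\times(K,\chi^{-1})}$ (the latter embedding coming from Frobenius reciprocity), so injectivity is automatic; you instead argue directly by localization with test functions. Your argument is more hands-on but equally valid, while the paper's is slightly cleaner and parallels the injection $i_\chi$ of Lemma \ref{lem:HeckeInj} used elsewhere. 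Your explicit unpacking of the surjectivity step via the generator $e_\chi$ is in fact what the paper's one-line appeal to Lemma \ref{lem:standardfacts}(4) amounts to.
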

\begin{proof}
Let $\mu_G$ be a choice of a Haar measure on $G$. 
We have a map $\psi:A(G)^{(K,\chi) \times (K,\chi^{-1})} \to \H_\chi$, 
\[
\psi(f' \mu_G)(f)(g)=(  f' \mu_G*f)(g)(x)= \int_G f(g)f'(xg^{-1}) d\mu_G.
\]
It is a well defined $G$-morphism since $\psi(f' \mu_G)(f) \in \ind_K^G \chi^{-1}$:
\[
\psi(f' \mu_G)(f)(hxg')
=\int_G f(g)f'(hxg'g^{-1}) d\mu_G
=\chi^{-1}(h)\psi(f' \mu_G)(R_{g'}f)(x).
\]
It is injective since $\psi$ commutes with embedding of both spaces into $\Dist(G)^{(K,\chi) \times (K,\chi^{-1})}$, which for $\H_\chi$ is given by Frobenius reciprocity. 
If $K$ is open, Lemma \ref{lem:standardfacts}(4) shows $\psi$ is an isomorphism.
\end{proof}

\begin{cor} \label{cor: GP1 for compact subgroup}
Let $K:=H$ be a compact subgroup, and assume $G$ is second countable. 
Then $\H_\chi$ is commutative $\iff$ $(G,K)$ is a $\chi$-Gelfand pair.
\end{cor}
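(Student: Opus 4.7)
Since $K$ is compact, the character $\chi$ takes values in the compact subgroup $S^1\subset \complex^\times$ and is therefore unitary, and $\delta=1$ by unimodularity. Thus $\H_\chi = \End_G(\ind_K^G\chi^{-1})$ and $\ind_K^G\chi^{-1}$ is a dense $G$-submodule of the unitary representation $W := L^2(K\backslash G,\chi^{-1})$. My plan is to transport questions about $\H_\chi$ to the direct integral decomposition of $W$, which is available by the Type I and second countability hypotheses.

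By Theorem \ref{thm:central decomposition},
\[W \simeq \int^{\oplus}_{\widehat G} m(\pi)\, V_\pi\, d\nu(\pi),\]
and unitary Frobenius reciprocity for the compact subgroup $K$ identifies $m(\pi) = d_{K,\chi^{-1}}(\pi)$ for $\nu$-a.e. $\pi$. By Proposition \ref{prop:Bernstein separation} the inclusion $\alpha:\ind_K^G\chi^{-1}\hookrightarrow W$ is pointwise defined, producing $G$-equivariant maps $\alpha_\pi:\ind_K^G\chi^{-1}\to m(\pi)V_\pi$ for $\nu$-a.e. $\pi$. Each $h\in\H_\chi$ thus induces operators $h_\pi\in \End_G(m(\pi)V_\pi)\simeq M_{m(\pi)}(\complex)$, and the map $h\mapsto (h_\pi)_\pi$ is separating since $\alpha$ is injective and determined by its pointwise components. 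A Schur-type / double-commutant argument further shows that the image of $\H_\chi$ inside $M_{m(\pi)}(\complex)$ is the full matrix algebra on a positive-measure set of $\pi$'s; consequently, $\H_\chi$ is commutative if and only if $m(\pi)\leq 1$ for $\nu$-a.e. $\pi$, equivalently $d_{K,\chi^{-1}}(\pi)\leq 1$ for every $\pi\in\supp(\nu)$.

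To upgrade this condition from $\supp(\nu)$ to all of $\Irr{G}$, I would argue as follows. First, $d_{K,\chi}(\rho) = d_{K,\chi^{-1}}(\tilde\rho)$, so the $\chi$- and $\chi^{-1}$-Gelfand properties coincide. Second, for compact $K$, any irreducible smooth $\rho$ with a non-zero $(K,\chi^{-1})$-equivariant functional embeds into $W$ via a matrix-coefficient construction (paired against $K$-finite vectors), so $\rho\in\supp(\nu)$. Combining these yields the full equivalence: $(G,K)$ is a $\chi$-Gelfand pair if and only if $\H_\chi$ is commutative.

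The main obstacle I anticipate is the double-commutant / separation step: verifying that the pointwise maps $\alpha_\pi$ capture enough of the algebraic structure of $\H_\chi$ to detect non-commutativity when $m(\pi)\geq 2$ on a positive-measure set. A secondary subtlety lies in the upgrade step, ensuring that every irreducible smooth representation with a $(K,\chi)$-type embeds into $W$ and appears in the direct integral decomposition, in the full generality of a second countable Type I group.
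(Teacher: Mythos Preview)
Your approach to the direction ``$\chi$-Gelfand implies $\H_\chi$ commutative'' is essentially the paper's: this is exactly Theorem~\ref{thm:GPimpliesCom} (via Propositions~\ref{prop:indisseparated} and~\ref{prop:HeckeInjMap}), specialized to compact $K$. Since $\chi$ is automatically unitary, the full $\chi$-Gelfand property implies the $(K,\chi^{-1})$-tempered one, and the injection $\phi_\chi$ into $\prod_\pi \End_\complex(\Hom_G(\ind_K^G\chi^{-1},\pi))$ does the work.

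The forward direction, however, has a genuine gap precisely at the two places you flag, and the paper takes a different and much shorter route. Your upgrade step is false as stated: the matrix coefficient $g\mapsto\ell(\rho(g)v)$ for $\ell\in(\rho^*)^{(K,\chi^{-1})}$ lies in $\Ind_K^G\chi^{-1}$ but need not be square-integrable. Concretely, for $G=\mathrm{SL}_2(F)$, $K$ maximal compact, $\chi=1$, an unramified non-tempered principal series has $K$-fixed vectors yet does not lie in $\supp(\nu_{W})$. So at best your argument recovers the tempered $\chi$-Gelfand property, not the full one. The double-commutant step is also unjustified: elements of $\H_\chi$ are algebraic $G$-endomorphisms of the smooth module $\ind_K^G\chi^{-1}$, and you have not shown they extend to bounded operators on $W$, so the von Neumann double-commutant theorem does not apply; nor is there any a priori reason the image of $\H_\chi$ in each fibre $M_{m(\pi)}(\complex)$ should be all of it.

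The paper bypasses both issues by invoking the Proposition immediately preceding the Corollary, which gives an algebra embedding $A(G)^{(K,\chi)\times(K,\chi^{-1})}\hookrightarrow\H_\chi$. Commutativity of $\H_\chi$ then forces commutativity of the classical convolution Hecke algebra, and the standard argument (for any $\rho\in\Irr{G}$ the $(K,\chi)$-isotypic component is an irreducible module over this algebra, hence one-dimensional when the algebra is commutative) yields $d_{K,\chi}(\rho)\leq 1$ for \emph{all} irreducible $\rho$, tempered or not. This is purely algebraic and requires no harmonic analysis on $W$.
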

\begin{proof}
If $\H_\chi$ is commutative, then by the previous lemma so is $A(G)^{(K,\chi) \times (K,\chi^{-1})}$,
implying that $(G,K)$ is a $\chi$-Gelfand pair.

The converse follows since by \cite[Separation Lemma]{BR}, $\H_\chi$ embeds in $\prod\limits_{\rho \in \Irr{G}} \End_\complex(\Hom_G(\ind_H^G \chi^{-1}),\rho)$ which is commutative if $(G,H)$ is a Gelfand pair (see Theorem \ref{thm:GPimpliesCom} for a proof assuming $G$ is of type I).
\end{proof}
\begin{prop} \label{GP1 for G/H compact}
Let $\chi$ be a unitary character of $H$, and assume
that $H\backslash G$ is compact.
Then $\mathcal H_\chi$ is commutative $\iff$ $(G,H)$ is a $\chi$-Gelfand.
\end{prop}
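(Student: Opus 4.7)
The plan is to endow $V:=\ind_H^G\chi^{-1}\delta$ with a $G$-invariant Hilbert-space structure coming from the compactness of $H\backslash G$ and the unitarity of $\chi$, decompose $V$ as a direct sum of irreducible subrepresentations, and then identify both the commutativity of $\H_\chi=\End_G(V)$ and the $\chi$-Gelfand property with the statement that all multiplicities in this decomposition are at most one.

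First, since $H\backslash G$ is compact we have $\ind_H^G=\Ind_H^G$, and $V$ is admissible: for any open compact subgroup $U\subset G$, functions in $V^U$ are determined by their values on the finite double-coset set $H\backslash G/U$. Since $\chi$ is unitary and $\delta$ is positive-real, for $f_1,f_2\in V$ the product $f_1\overline{f_2}$ transforms under left $H$-translation by $\delta^2=\delta_H^{-1}$ (the $\chi$ and $\overline{\chi}$ factors cancel), so $\langle f_1,f_2\rangle:=\int_{H\backslash G}f_1\overline{f_2}\,d\mu$ defines a well-defined, $G$-invariant, positive-definite Hermitian form on $V$.

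I next claim that any admissible smooth $G$-representation equipped with such a form is a direct sum of irreducible subrepresentations. For any subrepresentation $W\subset V$, the orthogonal complement $W^\perp$ taken inside $V$ is a $G$-invariant complement: for $v\in V^U$, the orthogonal projection of $v$ onto the finite-dimensional subspace $W^U$ yields the $W$-component, and the remainder is orthogonal to all of $W$ because the Hecke idempotent $e_U$ is self-adjoint. A Zorn's-lemma argument applied to maximal families of pairwise orthogonal irreducible subrepresentations then produces $V=\bigoplus_{\rho\in\Irr{G}}\rho^{m(\rho)}$ with each $m(\rho)$ finite (since $V^U$ is finite-dimensional). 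By Schur's lemma applied to the admissible irreducibles appearing, $\H_\chi\simeq\prod_{\rho}M_{m(\rho)}(\complex)$, which is commutative if and only if $m(\rho)\leq 1$ for every $\rho\in\Irr{G}$.

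Finally, to match this with the $\chi$-Gelfand property, I invoke Lemma \ref{lem:standardfacts}(1): using $\delta_{H\backslash G}=\delta^2$, the smooth dual is $\tilde V=\Ind_H^G\chi\delta=\ind_H^G\chi\delta$, the last equality because $H\backslash G$ is compact. Admissibility ensures that the dual decomposes as $\tilde V=\bigoplus_{\rho}\tilde\rho^{m(\rho)}$, and by Frobenius reciprocity (Lemma \ref{lem:standardfacts}(3)) we get $\dim\Hom_H(\rho|_H,\chi\delta)=\dim\Hom_G(\rho,\tilde V)=m(\tilde\rho)$. Since $\rho\mapsto\tilde\rho$ is a bijection on $\Irr{G}$, the condition $m(\rho)\leq 1$ for every $\rho$ is equivalent to $\dim\Hom_H(\rho|_H,\chi\delta)\leq 1$ for every $\rho$, i.e.\ the $\chi$-Gelfand property. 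The main technical obstacle is the second step, where one must verify that orthogonal complements stay within the smooth vectors $V$ rather than escape into a Hilbert-space completion, and that the Zorn argument exhausts all of $V$; both points rely on admissibility together with self-adjointness of the Hecke idempotents $e_U$.
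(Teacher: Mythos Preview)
Your argument is correct and follows essentially the same route as the paper: show that $V=\ind_H^G\chi^{-1}\delta$ is admissible and unitarizable, hence semi-simple, so that $\H_\chi\simeq\prod_\rho M_{m(\rho)}(\complex)$ is commutative iff all multiplicities are at most one, and then identify those multiplicities with $d_{H,\chi\delta}(\rho)$. The paper's proof is terser---it asserts unitarizability and semi-simplicity without constructing the Hermitian form or justifying orthogonal complements, and it stops at $n_\alpha=\dim_\complex\Hom_G(V,\rho_\alpha)\le 1$ without spelling out the duality step---whereas you fill in exactly these details (the explicit form, the $e_U$-self-adjointness argument, and the passage through $\tilde V\simeq\Ind_H^G\chi\delta$ and Frobenius reciprocity).
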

\begin{proof}
Since $\chi$ is unitary, $\ind_H^G \chi^{-1} \delta$ is unitarizable, and
by compactness of $H \backslash G$ the representation $\Ind_H^G  \chi^{-1}\delta=\ind_H^G  \chi^{-1}\delta$ is admissible.
This implies it is semi-simple.
Consequently, by Schur's lemma $\H_\chi = \End_G(\ind_H^G \chi^{-1} \delta) $ decomposes as a product of matrix algebras, 
$\prod\limits_{\alpha \in I} \mathrm{M}_{n_\alpha}(\complex)$, where 
\[n_\alpha= \dim_\complex \Hom_G(\ind_H^G \chi^{-1} \delta,\rho_\alpha),
\]
and $\curly{\rho_\alpha}_{\alpha \in I}$ are the smooth irreducible representations of $G$ (up to equivalence).
In particular $\H_\chi$ is commutative if and only if $n_\alpha \leq 1 $ for every $\alpha \in I$.
\end{proof}
\subsection{Hecke pairs and Schlichting completions} 
\label{sec:Hecke pairs}
We now move to consider the case of {\it Hecke pairs}. These are pairs $(G,H)$ which behave as if $H$ was compact.
\begin{defn} Let $G$ be a l.c.t.d group with a closed subgroup $H$.
\begin{enumerate}
\item {We say $H$ is commensurated in $G$ if $|HgH/H|< \infty$ for every $g \in G$.}
\item We say that $(G,H)$ is a {\it Hecke pair} if $H$ is open and commensurated in $G$.
\end{enumerate}
\end{defn}

\begin{rem}
While Hecke pairs are usually defined in the context of discrete groups, there exist interesting non-discrete 
Hecke pairs $(G,H)$ where $H$ is not compact, cocompact or normal (see the pair in Corollary \ref{cor:BaderGroupII}).
\end{rem}
Let $H \leq G$ be l.c.t.d groups as in Section \ref{sec:conventions}.
If $(G,H)$ is a Hecke pair, one can complete the group $G$ with respect to a certain topology arising naturally from $H$ and
get a locally compact, totally disconnected, Hausdorff topological group $\hat{G}$ and a homomorphism $\beta:G \to \hat{G}$ such that $\hat{H}:=\overline{\beta(H)}$ is open and compact. 
The group $\hat{G}$ is usually called in the literature  
{\it the Schlichting completion} of $(G,H)$ 
(see \cite[Definitions 3.3, 3.4 and 3.7]{KLQ08} or \cite[Definition 5.2]{RW}).

More explicitly, for a set $S \subset G / H$  define $H_S=H \cap \bigcap\limits_{g \in S} g H g^{-1}$ and let
$\mathcal{S}:=\{S \subset G/H: S  \text{ is finite}\}$.
If $(G,H)$ is a Hecke pair and $S \in \mathcal{S}$, then $H_S$ has finite index in $H$.
The collection $\{H_S : S \in \mathcal{S}\}$ forms a {basis} at identity for a group topology on $G$,
and completing $G$ with respect to it yields the Schlichting completion of $(G,H)$.

Alternatively, note $\{G/H_S : S\in \mathcal{S}\}$ forms an inverse system. The Schlichting completion of $(G,H)$ can be described as follows:
\begin{lem}[{\cite[Proposition 3.10]{KLQ08}}]
\label{lem: inverselimit}
$\hat {G}\simeq  \lim\limits_{\xleftarrow[{S \in \mathcal{S}}]{} } G/H_S$.
\end{lem}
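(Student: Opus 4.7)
The plan is to verify that $L := \lim\limits_{\xleftarrow[S \in \mathcal{S}]{}} G/H_S$, equipped with its natural topology as a closed subspace of the product $\prod_{S \in \mathcal{S}} G/H_S$ with each factor discrete, realizes the Schlichting completion. Concretely, I will check that $L$ is a locally compact, totally disconnected, Hausdorff topological group which admits a continuous homomorphism $\beta : G \to L$ with dense image, and for which $\overline{\beta(H)}$ is compact and open; uniqueness of the Hausdorff completion of a topological group would then force $\hat G \simeq L$.

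First I would set up the inverse system. The collection $\mathcal{S}$ is directed by inclusion, and whenever $S \subseteq S'$ one has $H_{S'} \subseteq H_S$, giving canonical quotient maps $G/H_{S'} \twoheadrightarrow G/H_S$. The universal property of the inverse limit then yields a group homomorphism $\beta : G \to L$ assembling the natural maps $G \twoheadrightarrow G/H_S$. Since $(G,H)$ is a Hecke pair, each $H_S$ has finite index in the open subgroup $H$, hence is itself open in $G$, so every $G/H_S$ is discrete. It follows that $\prod_S G/H_S$ is zero-dimensional and Hausdorff, and that $L$, as a closed subgroup, inherits these properties.

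Next I would identify the relevant topologies. A basis of open neighborhoods of the identity in $L$ is given by the sets $V_S$, where $V_S$ denotes the preimage of $eH_S$ under the projection $L \to G/H_S$; a direct check yields $\beta^{-1}(V_S) = H_S$, which is precisely the prescribed neighborhood basis of $e \in G$ in the Schlichting topology. Density of $\beta(G) \subseteq L$ is standard: any compatible tuple $(g_S H_S)_S \in L$ can at each finite stage be matched by an honest $g \in G$, since the transition maps $G/H_{S'} \to G/H_S$ are surjective. The key remaining ingredient is to pin down $\overline{\beta(H)}$, which I would identify with $L_H := \lim\limits_{\xleftarrow[S \in \mathcal{S}]{}} H/H_S$, viewed as the natural closed subspace of $L$. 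One inclusion holds because $L_H$ sits in $L$ as the preimage of the clopen point $\{eH\}$ under the projection $L \to G/H_{\{eH\}} = G/H$ and so is closed; the reverse inclusion follows from the same density argument applied inside $L_H$. Compactness of $L_H$ is then immediate, as it is closed inside the product of the \emph{finite} sets $H/H_S$, and openness of $\overline{\beta(H)} = L_H$ in $L$ follows from the clopen preimage description.

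The main point requiring care, in my view, is exactly this last identification $\overline{\beta(H)} = L_H$ together with its compactness, because it is there that the Hecke-pair hypothesis enters essentially (via finiteness of each $H/H_S$); everything else is formal from the inverse-limit construction. Once these properties are in hand, $L$ is complete Hausdorff (a closed subspace of a product of discrete, hence complete, uniform spaces), contains $\beta(G)$ densely, and the pullback topology on $G/\ker(\beta)$ coincides with the Schlichting topology, so the universal property of the Hausdorff completion yields the desired isomorphism $\hat G \simeq L$.
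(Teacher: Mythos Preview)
The paper does not give its own proof of this lemma: it is simply cited as \cite[Proposition 3.10]{KLQ08}, so there is no argument in the paper to compare against. Your proposal supplies a correct and self-contained verification that the inverse limit realizes the Schlichting completion, by checking directly that $L$ is a complete Hausdorff topological group containing $\beta(G)$ densely with the correct induced topology, and that $\overline{\beta(H)}$ is compact open; the appeal to uniqueness of the Hausdorff completion then finishes it. This is exactly the kind of argument one would expect, and the point you flag as delicate---the identification $\overline{\beta(H)} = \varprojlim H/H_S$ and its compactness via the finiteness of each $H/H_S$---is indeed where the Hecke-pair hypothesis is used.
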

\begin{rem}
Since we demand $\hat{G}$ is Hausdorff, we have $ \ker \beta =\bigcap\limits_{g \in G} g H g^{-1}$.
\end{rem}
Let $\mathrm{Rep}(G)$ be the category of smooth representations of a group $G$, and for $H \leq G$ set 
\[\mathrm{Rep}_H(G):=\curly{V \in \repg : V= \bigcup\limits_{S \in \mathcal{S}}V^{H_S} }.\]
 Note that an irreducible representation $\rho \in \mathrm{Rep}(G)$ lies in $\mathrm{Rep}_H({G})$ if $\rho^{H_S}\neq \{0\}$ for some $S \in \mathcal{S}$.
\begin{thm} \label{thm:equivofcats}
Let $(G,H)$ be a Hecke pair. Then $\mathrm{Rep}_H(G) \cong\mathrm{Rep}(\hat{G}) $.
\end{thm}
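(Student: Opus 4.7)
The plan is to construct an explicit pair of quasi-inverse functors between $\mathrm{Rep}(\hat{G})$ and $\mathrm{Rep}_H(G)$. The key preliminary fact I would establish first is that for every $S \in \mathcal{S}$ the natural map $G/H_S \to \hat{G}/\hat{H}_S$ is a bijection, where $\hat{H}_S := \overline{\beta(H_S)}$. Surjectivity follows from density of $\beta(G)$ in $\hat{G}$ combined with openness of $\hat{H}_S$; injectivity follows from $\beta^{-1}(\hat{H}_S) = H_S$, which uses $\ker\beta = \bigcap_{g\in G} g H g^{-1} \subseteq H_S$. In addition, since $\hat{H}$ is compact open in $\hat{G}$ and $\{\hat{H}_S : S \in \mathcal{S}\}$ form a neighborhood basis of the identity (this follows from Lemma \ref{lem: inverselimit}), these open compact subgroups detect smoothness of $\hat{G}$-representations.

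For the first direction I would define $F:\mathrm{Rep}(\hat{G})\to \mathrm{Rep}_H(G)$ as the pullback along $\beta$. If $W\in\mathrm{Rep}(\hat G)$ and $w\in W$, then smoothness gives some open neighborhood $U$ of the identity with $w\in W^U$, and since $\{\hat{H}_S\}$ is a neighborhood basis at the identity we get $w\in W^{\hat{H}_S}$ for some $S$; pulling back yields $w\in W^{H_S}$, so $F(W)$ indeed lies in $\mathrm{Rep}_H(G)$. For the other direction I would define $F':\mathrm{Rep}_H(G)\to \mathrm{Rep}(\hat G)$ as follows: for $v\in V^{H_S}$ and $\hat g\in \hat G$, use the bijection $\hat{G}/\hat{H}_S \cong G/H_S$ to lift the coset $\hat g\hat H_S$ to a coset $gH_S$ in $G/H_S$, and set $\hat g\cdot v := g\cdot v$. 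The value is independent of the representative $g$ because $v$ is $H_S$-fixed, and it is independent of $S$ because if $v\in V^{H_{S}}\cap V^{H_{S'}}$ then $v\in V^{H_{S\cup S'}}$ and the lifts of $\hat g \hat H_{S\cup S'}$ all lie in one $H_S$-coset.

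Next I would verify that $F'(V)$ is a genuine smooth $\hat G$-representation: associativity follows because approximate representatives of $\hat g_1$ and $\hat g_2$ in $G$ can be chosen so that their product approximates $\hat g_1 \hat g_2$ modulo $\hat H_S$ (using commensuration to enlarge $S$); smoothness follows because $v\in V^{H_S}$ is by construction stabilized by $\hat H_S$. Finally I would check that $F\circ F' = \mathrm{Id}$ (trivially, restricting the defined action to $\beta(G)$ recovers the original) and $F'\circ F = \mathrm{Id}$ (using density of $\beta(G)$ in $\hat G$ together with smoothness of $W$ to show the reconstructed action agrees with the original on each $W^{\hat H_S}$).

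The main obstacle I anticipate is the bookkeeping in showing that the lifted $\hat G$-action on $V$ is well-defined and associative, consistently across different choices of $S$. Once the identification $G/H_S \cong \hat G/\hat H_S$ is in hand this becomes mostly formal, but it requires using commensuration to replace any finite family of $H_{S_i}$ by a common refinement $H_{\bigcup S_i}$ of finite index in $H$, so that arguments can be reduced to a single level of the inverse system.
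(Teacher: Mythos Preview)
Your proposal is correct and follows essentially the same strategy as the paper: both construct the restriction functor $\mathrm{Rep}(\hat G)\to\mathrm{Rep}_H(G)$ by pullback along $\beta$, and both construct the extension functor by letting $\hat g$ act on $v\in V^{H_S}$ through the identification of $\hat G/\hat H_S$ with $G/H_S$. The only real difference is packaging: the paper invokes the description $\hat G\simeq\varprojlim_{S}G/H_S$ from Lemma~\ref{lem: inverselimit} directly, assembles the maps $\hat\rho_S:\hat G\to\Hom_\complex(V^{H_S},V)$ via the universal property of $\varprojlim_S\Hom_\complex(V^{H_S},V)\simeq\End_\complex(V)$, and then uses density of $\beta(G)$ plus continuity to get the homomorphism property for free. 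You instead verify well-definedness across different $S$ and associativity by hand using the common refinement $H_{S\cup S'}$. The paper's route is slightly slicker (the inverse-limit formalism absorbs the bookkeeping you flagged as the main obstacle), while yours is more explicit and makes the role of commensuration more visible; either works.
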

\begin{rem}
Note that every $\rho \in \mathrm{Rep}_H(G)$ is trivial on the kernel of $\beta$ since $\ker (\beta) \subset H_S$ for every $S \in \mathcal{S}$. It follows that $\mathrm{Rep}_H(G) \cong \mathrm{Rep}_{H/N}(G/N)$ for $N=\ker(\beta)$, so it is enough to prove the theorem in the case where $\beta$ is injective.
\end{rem}

\begin{proof}[ {Proof of Theorem \ref{thm:equivofcats}}]
For $(V,\rho)\in \mathrm{Rep}_H(G)$ and $S \in \mathcal{S}$, 
set 
$\rho_S : G/H_S \to \mathrm{Hom}_\complex(V^{H_S},V)$ 
by $\rho_S(gH_S)(v)=\rho(g)(v)$.
This is well defined since every $h_S \in H_S$ acts trivially on $V^{H_S}$.
Now, by  Lemma \ref{lem: inverselimit} we have maps $\hat{G} \to G/H_S$, and by precomposing we get 
maps $\hat{\rho}_S : \hat{G} \to\mathrm{Hom}_\complex(V^{H_S},V)$.

{Note that $\curly{\mathrm{Hom}_\complex(V^{H_S},V)}_{S \in \mathcal{S}}$ forms an inverse system with respect to the natural restriction maps.
These maps commute with the maps $\curly{\hat{\rho}_S}_{S \in \mathcal{S}}$ defined above, and furthermore
\[
\lim\limits_{\xleftarrow[{S \in \mathcal{S}}]{} }\mathrm{Hom}_\complex(V^{H_S},V)
=\mathrm{Hom}_\complex(\lim\limits_{\xrightarrow[{S \in \mathcal{S}}]{} } V^{H_S},V) 
= \mathrm{Hom}_\complex(V,V).
\]
By the universal property of 
$\lim\limits_{\xleftarrow[{S \in \mathcal{S}}]{} }\mathrm{Hom}_\complex(V^{H_S},V)$ we get 
a unique map $F(\rho):=\hat{\rho}:\hat{G} \to \mathrm{End}_\complex(V)$.
By the construction $\hat{\rho}|_G = \rho$; if $v\in V$ then $v \in V^{H_S}$ for some $S\in \mathcal{S}$, we get for every $\beta(g)\in \hat{G}$,
\[
\hat{\rho}(\beta(g))(v)=\hat{\rho}_S(\beta(g))(v)=\rho_S(gH_S)(v)=\rho(g)(v).
\]
Note that $\hat{\rho}$ is continuous, 
i.e. for a given $v \in V$ the map $\hat{\rho}_v:g  \mapsto \hat{\rho}(g)(v) $ is continuous. 
This implies that $\hat{\rho}$ is
smooth, since for every $v \in V$ we have $\beta(H_S) \subset \mathrm{Stab}_{\hat{G}}(v)$ for some $S \in \mathcal{S}$,
so $\overline{\beta(H_S)}$ stabilizes $v$.
In particular, $\hat{\rho}$ is a homomorphism of groups to $\mathrm{GL}(V)$,
since it is a homomorphism on a dense set $G \subset \hat{G}$, and similarly it follows that $F$ carries morphisms to morphisms.
We conclude that $F((V,\rho))=(V,\hat{\rho}) \in \mathrm{Rep}(\hat{G})$.}

Conversely, define $\mathrm{res}^{\hat{G}}_{G}: \mathrm{Rep}(\hat{G}) \to \repg$ by $\mathrm{res}^{\hat{G}}_{G}(\hat{\rho})(g) =\hat{\rho}(\beta(g))$, acting on the same space. 
Since every $\hat{V}$  is smooth and $\beta$ is continuous, every $\mathrm{res}^{\hat{G}}_{G}(\hat{V})$ is smooth.
Furthermore, since a basis for the topology of $\hat{G}$ at identity is given by $\curly{\overline{\beta{(H_S)}}: S \in \mathcal{S}}$, every $v \in \hat{V}$ is stabilized by some $\overline{\beta({H_S})}$, and thus every $v$ is stabilized by some $H_S$ as a representation of $G$.

Since by the construction $\mathrm{res}^{\hat{G}}_{G} \circ F =\mathrm{Id}_{\mathrm{Rep}_H(G)}$ and $F \circ \mathrm{res}^{\hat{G}}_{G} = \mathrm{Id}_{\mathrm{Rep}(\hat{G})}$, we are done.
\end{proof}
\begin{lem} \label{lem:isomorphism of Hecke algs}
Let $(G,H)$ be a Hecke pair and let $\chi : H \to \complex^{\times} $ be a character such that $\chi(H_S)=1$ for some $S \in \mathcal{S}$. 
Then $\H_\chi(G,H) \simeq \H_{\hat \chi}(\hat{G},\hat{H})$, where $\hat{\chi}{|_H}=\chi$.
\end{lem}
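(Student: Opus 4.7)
The plan is to transport the defining module of $\H_\chi$ across the equivalence of categories $\mathrm{Rep}_H(G) \cong \mathrm{Rep}(\hat{G})$ from Theorem \ref{thm:equivofcats}. Specifically, I would establish that $\ind_H^G \chi^{-1} \delta$ corresponds to $\ind_{\hat H}^{\hat G} \hat\chi^{-1}\hat\delta$ under this equivalence; since equivalences of categories preserve endomorphism algebras, this immediately yields
\[
\H_\chi(G,H) = \End_G(\ind_H^G \chi^{-1}\delta) \simeq \End_{\hat G}(\ind_{\hat H}^{\hat G} \hat\chi^{-1}\hat\delta) = \H_{\hat\chi}(\hat G, \hat H).
\]

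First I would dispense with a few preliminaries. The hypothesis $\chi(H_S) = 1$ says that $\chi$ is continuous for the Hecke pair topology on $H$, so it extends uniquely to a continuous character $\hat\chi$ of the compact group $\hat H$, satisfying $\hat\chi \circ \beta|_H = \chi$. Both modular factors trivialize on the relevant subgroups: since $H$ is open in the unimodular $G$ we have $\delta_H = \delta_G|_H = 1$, whence $\delta = 1$; similarly $\hat H$ compact and open in $\hat G$ forces $\hat\delta|_{\hat H} = 1$. Thus the modular factors disappear and one may compare $\ind_H^G \chi^{-1}$ with $\ind_{\hat H}^{\hat G} \hat\chi^{-1}$ directly.

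The substantive step is to construct and analyze the map $\Phi : \ind_{\hat H}^{\hat G} \hat\chi^{-1} \to \ind_H^G \chi^{-1}$ defined by $\Phi(f) = f \circ \beta$. Left $(H,\chi^{-1})$-equivariance follows from $\hat\chi \circ \beta|_H = \chi$, and right $G$-equivariance from $\beta$ being a group homomorphism. For compactness of support modulo $H$: any $f \in \ind_{\hat H}^{\hat G} \hat\chi^{-1}$ is supported on finitely many cosets $\hat H \hat g_i$, and density of $\beta(G)$ in $\hat G$ combined with openness of $\hat H$ lets one take $\hat g_i = \beta(g_i)$; then $\supp(\Phi(f)) \subseteq \beta^{-1}(\hat H \beta(\{g_1,\ldots,g_n\})) = H\{g_1,\ldots,g_n\}$, using that $\ker\beta = \bigcap_{g \in G} gHg^{-1} \subseteq H$. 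Injectivity of $\Phi$ follows from the density of $\beta(G)$ in $\hat G$ together with the local constancy of $f$. For surjectivity, given $f' \in \ind_H^G \chi^{-1}$ one uses that $f'$ is stabilized by some $H_{S'}$ to descend it to compatible functions on the finite quotients $G/H_T$ for $T \supseteq S'$, and then invokes the universal property of Lemma \ref{lem: inverselimit} to obtain a continuous $\hat f$ on $\hat G$ with $\Phi(\hat f) = f'$ and the required compact support modulo $\hat H$.

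The main obstacle will be verifying that $\ind_H^G \chi^{-1}$ actually lies in the subcategory $\mathrm{Rep}_H(G)$ in the first place, so that Theorem \ref{thm:equivofcats} applies to it, and then checking that the functor $F$ of that theorem identifies the two compact induction modules exactly rather than producing some larger or smaller $\hat G$-representation. Both points reduce to careful bookkeeping of the support of a function in $\ind_H^G \chi^{-1}$ against the $H_S$-smoothness filtration, where the hypothesis $\chi(H_S)=1$ enters to ensure the left equivariance condition is compatible with that filtration.
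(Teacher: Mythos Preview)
Your proposal is correct and follows essentially the same approach as the paper: apply the equivalence $\mathrm{Rep}_H(G)\cong\mathrm{Rep}(\hat G)$ of Theorem \ref{thm:equivofcats} and identify $\ind_H^G\chi^{-1}$ with $\ind_{\hat H}^{\hat G}\hat\chi^{-1}$ under it, noting $\delta=1$ since $H$ is open. The paper's own proof is two sentences asserting precisely these two points; your proposal simply supplies the details the paper omits (the explicit map $\Phi=(-)\circ\beta$, the support and smoothness bookkeeping, and the check that $\ind_H^G\chi^{-1}\in\mathrm{Rep}_H(G)$).
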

\begin{proof}
By the equivalence of categories, $\End_G(\ind_H^G \chi^{-1}) \simeq \End_{\hat G}(F(\ind_H^G \chi^{-1}))$.
Now $F(\ind_H^G \chi^{-1})=\ind_{\hat{H}}^{\hat{G}} \hat{\chi}^{-1}$, where $\hat{\chi}: \hat{H} \to \complex^{\times}$
is a character of $\hat{H}$ extending $\chi$. Note $H$ is open so $\delta=1$, and that an extension $\hat\chi$ exists since $H/ \ker \chi \simeq \hat{H}/ \ker \chi$ as finite groups.
\end{proof}
\begin{prop} \label{cor:HeckeCase}
Let $(G,H)$ be a Hecke pair and $\chi : H \to \complex^{\times}$ as in Lemma \ref{lem:isomorphism of Hecke algs}. 
Then $\H_\chi$ is commutative $\iff$ $(G,H)$ satisfies the $\chi$-Gelfand property with respect to every irreducible representation in $\mathrm{Rep}_H(G)$.
In particular, if $\H_\chi$ is commutative 
then 
$\dim_\complex \Hom_H(\rho|_H,\chi)\leq1$ for every smooth irreducible admissible representation $\rho$ of $G$.
%
\end{prop}
\begin{proof}
Let $M=\bigcup\limits_{S \in \mathcal{S}}(\Ind_H^G \chi)^{H_S}$, then $F(M)=\Ind_{\hat{H}}^{\hat{G}} \hat{\chi}$ where $\hat{\chi}$
is a character of $\hat{H}$ extending $\chi$.
Let $\rho \in \mathrm{Rep}_H(G)$ be an irreducible representation.
By the equivalence of categories, 
 we have 
\[
\dim_\complex\Hom_G(\rho,\Ind_H^G \chi)
=\dim_\complex\Hom_G(\rho,M)
=\dim_\complex\Hom_{\hat{G}}(F(\rho),\Ind_{\hat{H}}^{\hat{G}} \hat{\chi}),
\]
and in particular $(G,H)$ is a $\chi$-Gelfand pair with respect to $ \mathrm{Rep}_H(G)$ $\iff$ $(\hat{G},\hat{H})$ is a $\hat{\chi}$-Gelfand pair.
Using Lemma \ref{lem:isomorphism of Hecke algs}, we have $\H_\chi(G,H)  \simeq \H_{\hat{\chi}}(\hat{G},\hat{H})$,
and since $\hat{H}$ is compact and open in $\hat{G}$, we get that $(\hat{G},\hat{H})$ is a Gelfand pair $\iff$ 
$\H_{\hat{\chi}}(\hat{G},\hat{H}) \simeq \H_\chi(G,H)$ is commutative. 

If $\rho$ is an irreducible admissible representation of $G$ and $\dim(\rho^*)^H>0$, then $\tilde{\rho}$ is irreducible. Since $H$ is open $\dim (\rho^*)^H=\dim (\tilde{\rho})^H>0$ and we have $\tilde{\rho}\in \mathrm{Rep}_H(G)$. The claim now follows by the first part of the proof.
\end{proof}

\begin{cor}
\label{cor:BaderGroupII}
Let $\mathcal{T}_d$ be a $d\geq 3$ regular tree, let 
$G \leq \mathrm{Aut}(\mathcal{T}_d)$ 
be the subgroup of automorphisms of $\mathcal{T}_d$ such that their local action is prescribed by a fixed finite permutation group $\{e\}\neq\Lambda < S_d$ around all but finitely many vertices, equipped with a l.c.t.d topology as in \cite{LB16}, and let $H$ be a stabilizer of a vertex $v$ in $\mathcal{T}_d$. 
Then $d_{H,1}(\rho) \leq 1$ for every $\rho \in \mathrm{Irr}(G)$ which is either admissible or $\rho \in \mathrm{Rep}_H(G)$.
\end{cor}
\begin{proof}
Since ${G}$ and $H$ are dense in $\mathrm{Aut}(\mathcal{T}_d)$ and $\mathrm{Stab}_{\mathrm{Aut}(\mathcal{T}_d)}(v)$ respectively, where $\mathrm{Stab}_{\mathrm{Aut}(\mathcal{T}_d)}(v)$ is open and compact, it follows that the Schlichting completion of $(G,H)$ is $(\mathrm{Aut}(\mathcal{T}_d),\mathrm{Stab}_{\mathrm{Aut}(\mathcal{T}_d)}(v))$. 
The claim now follows from Lemma \ref{lem:isomorphism of Hecke algs} and Proposition \ref{cor:HeckeCase}, since the pair $(\mathrm{Aut}(\mathcal{T}_d),\mathrm{Stab}_{\mathrm{Aut}(\mathcal{T}_d)}(v))$ is a Gelfand pair  (see \cite{Ol77}).
\end{proof}
\begin{rem} Let $(G,H)$ and $\chi$ be as in Lemma \ref{lem:isomorphism of Hecke algs} and let $\rho$ be an irreducible representation of $G$.
If $\rho^{(H,\chi^{-1})}\neq 0$ then $\rho \in\mathrm{Rep}_H(G)$. 
If $\rho$ is not admissible, it may happen
that $(\rho^*)^{(H,\chi^{-1})}\neq 0$ but $\rho \notin\mathrm{Rep}_H(G)$, and
in particular
even if $\H_\chi$ is commutative we might have $\dim_\complex(\rho^*)^H > 1$. 
\end{rem}
\begin{rem} If $H$ is open but $(G,H)$ is not a Hecke pair Proposition \ref{cor:HeckeCase} fails.
Let $T_3$ be the subgroup of diagonal matrices in $\mathrm{GL}_3(\Qp)$, let $G:=N_{\mathrm{GL}_3(\Qp)}(T_3)$ be its normalizer and set $H=\mathrm{diag}(\Zp^\times,\Qp^\times,\Qp^\times) \leq T_3$.
One can show $\mathcal{H} \simeq \complex[\ints \times \ints/ 2 \ints]$ is abelian, but the irreducible $G$-representation
$V=\{(x_1,x_2,x_3) \in \complex^3 : \sum x_i=0\}$ with the $G$-action given by permuting the coordinates
has $\dim V^H=2$. In particular $(G,H)$ is not a Gelfand pair.
\end{rem}
\subsubsection{Hecke pairs of algebraic groups}
An algebraic group ${\bf G}$ defined over $\rats$ naturally gives rise to a Hecke pair by considering $({\bf G}(\rats),{\bf G}(\ints))$ (see \cite[Section 4.1, Corollary 1]{PR94}).
{This has been famously used in \cite{BC95}.}
Furthermore, in this case the Schlichting completion can be determined explicitly. 
One can then translate information from
the relative representation theory of the completion to that of  $({\bf G}(\rats),{\bf G}(\ints))$.

Let $\mathbb{A}_f:=\sideset{}{'}\prod\limits_{p\text{ prime}} \Qp $ be the ring of finite adeles, 
and set $\hat{\ints}:=\prod\limits_{p\text{ prime}} \Zp$.
We will need the following.
\begin{thm}[Strong approximation, see {\cite[Theorem 7.12]{PR94}}]
\label{thm:strong approx}
Let ${\bf G}$ be a connected, simply connected reductive algebraic group over $\rats$ with no $\rats$-simple components ${\bf G}^i$ where ${\bf G}^i(\reals)$ is compact.
Then the diagonal embedding~ $\psi:({\bf G}(\rats),{\bf G}(\ints)) \hookrightarrow ({\bf G}(\mathbb{A}_f),{\bf G}(\hat{\ints}))$  is dense.
\end{thm}

Assume ${\bf G}$ satisfies the conditions of the strong approximation theorem.
Then $\psi$ is a completion map, and by the universal property of the Schlichting completion 
(\cite[Theorem 5.4]{RW}) the pair $({\bf G}(\mathbb{A}_f),{\bf G}(\hat{\ints}))$ is the Schlichting completion of 
$({\bf G}(\rats),{\bf G}(\ints))$ (up to compact center). By Lemma \ref{lem:isomorphism of Hecke algs}, this implies 
\[\H({\bf G}(\rats),{\bf G}(\ints)) \simeq \H({\bf G}(\mathbb{A}_f),{\bf G}(\hat{\ints}))
\simeq {\bigotimes}'\H({\bf G}(\rats_p),{\bf G}(\ints_p))= {\bigotimes}' C_c^\infty({\bf G}(\rats_p))^{{\bf G}(\ints_p) \times {\bf G}(\ints_p)}.
\] 
We can thus restate the classical result that $({\bf G}(\mathbb{A}_f),{\bf G}(\hat{\ints}))$ is a Gelfand pair in the language of discrete groups using Proposition \ref{cor:HeckeCase}. 
Note that in the case ${\bf G}= \mathrm{GL}_2$ the Hecke algebra
$\H({\bf G}(\rats),{\bf G}(\ints))$ recovers the classical algebra of Hecke operators on modular forms \cite{Co04}.
\begin{cor} \label{cor:discrete algebraic pairs}
Let ${\bf G}$ be a connected reductive algebraic group defined over $\rats$ satisfying the strong approximation property (Theorem \ref{thm:strong approx}).
Then the following pairs $(G,H)$ satisfy 
$d_{H,1}(\rho)=\dim_\complex \Hom_{H}(\rho|_{H},\complex)\leq 1$ for every irreducible representation 
$\rho \in \mathrm{Rep}_{H}(G)$:
\begin{enumerate}
\item $(G,H)=({\bf G}(\rats),{\bf G}(\ints))$.
\item $(G,H)=({\bf G}(\ints[\frac{1}{p}]),{\bf G}(\ints))$. 
\item $(G,H)=({\bf G}(\rats),{\bf G}(\ints[S_p]))$, where $p$ is prime, $S_p=\curly{\frac{1}{p'} :p' \neq p\text{ and }p'\text{ is prime}}$.
\end{enumerate}
\end{cor}
\begin{rem}
Item (1) of Corollary \ref{cor:discrete algebraic pairs} above can be adapted to the case of a number field $K$.
\end{rem}
\section{$\mathcal{H}_\chi$ is commutative implies $(G,H)$ is a cuspidal $\chi$-Gelfand pair}
\label{sec:conversedirection}
Let ${\bf G}$ be a connected reductive group with a 
{Zariski closed subgroup ${\bf H}\leq {\bf G}$ and set $G={\bf G}(F)$ and $H={\bf H}(F)$}.
In this section we show that for $F$-spherical pairs (Definition \ref{def:sphericalpair}) the cuspidal part of $\mathcal{H}_\chi$ is commutative if and only if $(G,H)$
is a cuspidal $\chi$-Gelfand pair, i.e. $\dim_\complex\Hom_H( \rho|_H,\chi\delta) \leq 1$ for every $\rho \in \mathrm{Irr}(G)$.

Since Theorem \ref{thm:BerDecomp} implies that $\H_\chi
= \End_G(\ind_H^G \chi^{-1})$ 
decomposes as a direct sum of endomorphism rings over the different blocks, 
if it is commutative then in particular so are its projections to each cuspidal block.
This allows us to study 
the relation between commutativity of $\H_\chi$ and the twisted Gelfand property of $(G,H)$
  on each  block separately.
 Since cuspidal blocks have a relatively simple description, they are easier to analyze than a general block.

Let $\rho$ be an irreducible cuspidal representation of $G$.
Recall that in Section \ref{sec:Bdecomposition} we defined,
\[
Z:=Z(R_\rho)\subseteq B:=\mathcal{O}(\frak{X}_G) \subseteq R_\rho:=\End_G(\ind_{G_0}^G (\rho|_{G_0})),
\] 
and that furthermore we had $H_{\rho,V}:=\Hom_G(\ind_{G_0}^G (\rho|_{G_0}),V)$ for a $G$-representation $V$.

\begin{prop} \label{prop:commifftensoriscomm}
Let $Z_\rho \subset B \subset R_\rho$ be as above. If $N$ is an $R_\rho$-module then 
$\End_{R_\rho} N$ is commutative $\iff$ $\End_{B\otimes_{Z_\rho} {R_\rho}} B\otimes_{Z_\rho} N$ is commutative.
\end{prop}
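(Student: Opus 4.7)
The strategy is to reduce the claim to faithfully flat descent via Theorem \ref{thm:RamiEytanresults}. Part (2) of that theorem says $\mathrm{Spec}(B) \to \mathrm{Spec}(Z_\rho)$ is \'etale and surjective, so $Z_\rho \to B$ is faithfully flat; part (1) identifies $B \otimes_{Z_\rho} R_\rho$ with the matrix algebra $\mathrm{M}_n(B)$, which is Morita-equivalent to $B$.

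Set $A := \End_{R_\rho}(N)$. Since $Z_\rho = Z(R_\rho)$, $A$ is naturally a $Z_\rho$-algebra and its commutator bracket $c_A : A \otimes_{Z_\rho} A \to A$ is $Z_\rho$-linear. A direct check shows that the commutator of $A \otimes_{Z_\rho} B$ equals $c_A \otimes_{Z_\rho} \mathrm{id}_B$ under the canonical isomorphism $(A \otimes_{Z_\rho} B) \otimes_B (A \otimes_{Z_\rho} B) \cong A \otimes_{Z_\rho} A \otimes_{Z_\rho} B$. Because $Z_\rho \to B$ is faithfully flat, a $Z_\rho$-linear map vanishes if and only if its base change to $B$ vanishes; applied to $c_A$ this yields
\[
A \text{ is commutative } \Longleftrightarrow \; A \otimes_{Z_\rho} B \text{ is commutative}.
\]

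The remaining step is to identify $A \otimes_{Z_\rho} B$ with $\End_{B \otimes_{Z_\rho} R_\rho}(B \otimes_{Z_\rho} N)$ via the natural $B$-algebra map $\varphi \otimes b \mapsto \bigl(n \otimes b' \mapsto \varphi(n) \otimes bb'\bigr)$. Flatness of $B$ over $Z_\rho$, together with the matrix-algebra description $B \otimes_{Z_\rho} R_\rho = \mathrm{M}_n(B)$ and its Morita reduction to $B$, should make this map an isomorphism. The main obstacle I anticipate is precisely this base-change identification for $\Hom$: in full generality the natural map is an isomorphism under a finite-presentation hypothesis on $N$ as an $R_\rho$-module, and one must either check that such a hypothesis is automatic here or argue that the \'etale/matrix structure bypasses it. In the intended application, where $N = H_{\rho,\ind_H^G \chi^{-1}\delta}$, the requisite finiteness is supplied by Theorem \ref{thm:locally free sheaf}(2), which guarantees that $N|_B$ is locally free over a smooth subvariety of $\mathfrak{X}_G$.
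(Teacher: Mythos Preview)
Your approach is essentially the paper's: the two halves you isolate---faithfully flat descent of commutativity along $Z_\rho\hookrightarrow B$, and the base-change identification $B\otimes_{Z_\rho}\End_{R_\rho}(N)\cong\End_{B\otimes_{Z_\rho}R_\rho}(B\otimes_{Z_\rho}N)$---are exactly the paper's Lemma~\ref{lem:commifftensoriscomm part 1} and Proposition~\ref{prop:commifftensoriscomm part 2}, applied in that order.

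One clarification on the obstacle you flag. The Morita description $B\otimes_{Z_\rho}R_\rho\cong\mathrm{M}_n(B)$ does \emph{not} bypass the finite-presentation hypothesis: Morita equivalence identifies $\End_{\mathrm{M}_n(B)}(B\otimes_{Z_\rho}N)$ with an $\End_B$ of a $B$-module, but it says nothing about why this agrees with $B\otimes_{Z_\rho}\End_{R_\rho}(N)$. The paper simply invokes Proposition~\ref{prop:commifftensoriscomm part 2}, whose hypotheses are that $R_\rho$ is Noetherian and $N$ is finitely presented over $R_\rho$; these are not verified in the proof of Proposition~\ref{prop:commifftensoriscomm} itself, so the statement as written is tacitly restricted to such $N$. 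In the application (Theorem~\ref{thm:commHeckeImpliesCGP}) the relevant module $H_{\rho,\ind_H^G\chi^{-1}\delta}$ is finitely generated over $B$ by \cite[Appendix~B, Corollary~B.4.6]{AGS15}, and since $B$ (hence $R_\rho$, being module-finite over the Noetherian ring $Z_\rho$) is Noetherian, finite generation gives finite presentation. Your instinct to locate the finiteness in the application is correct; just cite \cite{AGS15} rather than Theorem~\ref{thm:locally free sheaf}(2) alone.
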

We prove Proposition \ref{prop:commifftensoriscomm} in two steps (Lemma \ref{lem:commifftensoriscomm part 1} and Proposition \ref{prop:commifftensoriscomm part 2} below).
\begin{lem} \label{lem:commifftensoriscomm part 1}
Let $Z \subseteq B$ be commutative rings such that $\mathrm{Spec} (B) \to \mathrm{Spec} (Z)$  is faithfully flat, and  let $R$ be a $B$-algebra. 
Then $R$ is commutative $\iff$ $B\otimes_Z {R}$ is commutative.
\end{lem}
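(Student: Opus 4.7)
The plan is to treat the two directions separately, with the forward direction being essentially immediate and the backward direction being the only place where faithful flatness actually enters.

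For the forward implication, I will first observe that $B \otimes_Z R$ carries a well-defined ring structure with $(b_1 \otimes r_1)(b_2 \otimes r_2) = b_1 b_2 \otimes r_1 r_2$: the image of $Z$ is central in $B$ (since $B$ is commutative) and central in $R$ (since $R$ is a $B$-algebra, so the map $Z \subseteq B \to R$ lands in $Z(R)$). Granted this, commutativity of $B \otimes_Z R$ follows immediately from commutativity of $B$ and of $R$ by computing on simple tensors and extending $Z$-linearly.

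For the backward implication, I will exploit faithful flatness of $Z \to B$ to pull back the vanishing of commutators from $B \otimes_Z R$ to $R$. The key standard input I will use is that faithful flatness implies that the unit map
\[
\iota_M : M \longrightarrow B \otimes_Z M, \qquad m \mapsto 1 \otimes m,
\]
is injective for every $Z$-module $M$. Applied to $M = R$ (viewed as a $Z$-module), for any $r_1, r_2 \in R$ I will compute
\[
\iota_R([r_1, r_2]) = 1 \otimes [r_1, r_2] = [1 \otimes r_1, 1 \otimes r_2] = 0
\]
in $B \otimes_Z R$, using that the multiplication on $B \otimes_Z R$ restricts on the subset $1 \otimes R$ to the multiplication of $R$, and that $B \otimes_Z R$ is commutative by hypothesis. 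Injectivity of $\iota_R$ then forces $[r_1, r_2] = 0$ in $R$, so $R$ is commutative.

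There is essentially no serious obstacle: the whole statement is a repackaging of the fact that noncommutativity descends along faithfully flat base change. The only minor point deserving care is verifying that the tensor product $B \otimes_Z R$ is actually a ring, which, as noted above, is automatic from the hypothesis that $R$ is a $B$-algebra together with commutativity of $B$. The injectivity of $\iota_M$ under faithful flatness is standard (it is a direct consequence of the flat extension preserving the injection $Z \hookrightarrow B$, tensored with $M$), so the proof reduces to these two short observations.
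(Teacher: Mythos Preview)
Your proof is correct and takes essentially the same approach as the paper: both handle the backward direction by showing the commutator $r_1 r_2 - r_2 r_1$ vanishes via faithful flatness, the paper packaging this through the left ideal $I = R(r_1 r_2 - r_2 r_1)$ and using $B \otimes_Z I = 0 \Rightarrow I = 0$, while you work directly with the element via injectivity of $\iota_R$. One minor quibble: your parenthetical justification for the injectivity of $\iota_M$ is not right as stated, since tensoring the inclusion $Z \hookrightarrow B$ with a non-flat $M$ need not stay injective; the standard argument instead tensors $\iota_M$ itself with $B$ and uses the retraction $B \otimes_Z B \to B$, but the fact you are invoking is of course true.
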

\begin{proof}
{
Assume $B \otimes_Z R$ is commutative, and consider the ideal $I=R(r_1 r_2 - r_2 r_1)$ where $r_1,r_2 \in R$. Since $B \otimes_Z I=0$, we get $r_1r_2 -r_2 r_1=0$ by faithfully flatness.

Since $B$ is commutative, the other direction is clear.
}
\end{proof}
\begin{prop} [{A non-commutative variant of \cite[Proposition 2.10]{E95}}] \label{prop:commifftensoriscomm part 2}

Let $Z$ be a commutative ring, let $B$ be a flat commutative $Z$-algebra, and let $R$ be a Noetherian $Z$-algebra. 
Assume we are given $R$-modules $N_1$ and $N_2$ such that $N_1$ is finitely presented. Then the natural map 
\begin{gather*}
  \kappa:B \otimes_Z \Hom_{ R}(N_1,N_2)\to\Hom_{B \otimes_Z R}(B \otimes_Z N_1,B \otimes_Z N_2),\\
  \kappa(b \otimes_Z \varphi)(b' \otimes_Z n)=b b' \otimes_Z \varphi(n)
\end{gather*}
is an isomorphism. 
\begin{enumerate}
\item If furthermore $N_1\simeq N_2$, then $\kappa$ is an  isomorphism of algebras.
\item In particular, if $Z=R$ and $B=S^{-1}Z$ where $S \subset Z$ is multiplicatively closed  then flatness of $S^{-1}Z$  over $Z$ implies $\kappa$ induces an isomorphism
\[
\kappa:S^{-1}\Hom_{Z }(N_1, N_2) \xrightarrow{\sim} \Hom_{ S^{-1}Z}(S^{-1}N_1,S^{-1}N_2).
\]
\end{enumerate}
\end{prop}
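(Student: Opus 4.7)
The plan is to follow the standard flat base change argument for $\Hom$ (compare Eisenbud's proof of the commutative version \cite[Proposition 2.10]{E95}), adapting it carefully because $R$ is only a $Z$-algebra (with $Z$ landing in the center) rather than commutative. First I would check that $\kappa$ is well defined on the tensor product (it is $Z$-bilinear because $Z$ is central in $R$ and the formula is $Z$-balanced) and that it is natural in both $N_1$ and $N_2$.

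The core of the argument is a d\'evissage in the first argument $N_1$. The base case is $N_1 = R$: both sides then identify naturally with $B \otimes_Z N_2$, and one checks from the formula that $\kappa$ becomes the identity map. By additivity of both functors in direct sums, this extends to $N_1 = R^n$ for every $n$. Next, choose a presentation $R^m \xrightarrow{\,\alpha\,} R^n \to N_1 \to 0$ guaranteed by finite presentation. Applying $\Hom_R(-, N_2)$ yields a left exact sequence
\[
0 \to \Hom_R(N_1,N_2) \to \Hom_R(R^n,N_2) \to \Hom_R(R^m,N_2),
\]
and tensoring with $B$ over $Z$ preserves left exactness because $B$ is $Z$-flat. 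On the other side, $B \otimes_Z -$ is right exact, giving $B \otimes_Z R^m \to B \otimes_Z R^n \to B \otimes_Z N_1 \to 0$, and applying $\Hom_{B \otimes_Z R}(-, B \otimes_Z N_2)$ produces a matching left exact sequence. Naturality of $\kappa$ gives a ladder between these two sequences whose two right columns are isomorphisms by the free case. A diagram chase (or the five-lemma applied to the completed three-term sequences) then forces the left column, i.e.\ $\kappa$ itself, to be an isomorphism.

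For part (1), with $N_1 = N_2 = N$, both sides carry obvious ring structures given by composition, and the map $\kappa$ is already an isomorphism of abelian groups. It remains to verify it is multiplicative. Since $B$ is central in $B \otimes_Z R$ and $Z$ is central in $R$, an elementary computation gives
\[
\kappa\bigl((b_1 \otimes \varphi_1)(b_2 \otimes \varphi_2)\bigr)(b' \otimes n)
= b_1 b_2 b' \otimes \varphi_1(\varphi_2(n))
= \kappa(b_1 \otimes \varphi_1)\bigl(\kappa(b_2 \otimes \varphi_2)(b' \otimes n)\bigr),
\]
so $\kappa$ is a ring homomorphism, hence a ring isomorphism. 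Part (2) is then the immediate specialization $Z := R$, $B := S^{-1}Z$, once one recalls that localizations are flat and that $S^{-1}Z \otimes_Z M \simeq S^{-1}M$ for any $Z$-module $M$.

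The only place that requires genuine care, as opposed to a direct transcription of the commutative case, is ensuring that the centrality hypotheses are used correctly so that every tensor product and every $\Hom$ in the ladder is an honest module/bimodule over the appropriate ring; the finite presentation of $N_1$ and the flatness of $B/Z$ are exactly the two hypotheses that make the five-lemma step go through, and I expect this compatibility bookkeeping to be the main (though routine) obstacle.
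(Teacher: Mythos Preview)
Your argument is correct and follows essentially the same route as the paper: reduce to the free case $N_1 = R^n$ and then use a presentation of $N_1$ together with flatness of $B$ to conclude. The only cosmetic difference is that the paper runs the d\'evissage through a full free resolution $P^\bullet \to N_1$ and phrases the comparison as an isomorphism of $\Ext$-groups (then sets $i=0$), whereas you truncate to the two-term presentation and invoke the five-lemma directly; these are the same idea.
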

\begin{proof}
If $N_1 \simeq N_2$, this is clearly a map of rings.

We prove $\kappa$ is an isomorphism in two steps. 
Assume $N_1\simeq R^m$ is a free $R$-module.
We have 
\begin{gather*}
B\otimes_Z \Hom_{R}(N_1,N_2) =B\otimes_Z \Hom_{R}(R^m,N_2)  \simeq (B \otimes_Z N_2)^m, \\
\Hom_{B \otimes_Z R}(B \otimes_Z N_1,B \otimes_Z N_2) \simeq  \Hom_{B \otimes_Z R}((B \otimes_Z R)^m,B \otimes_Z N_2) \simeq(B \otimes_Z N_2)^m.
\end{gather*}
In this presentation, the map $\kappa$ is given on generators by $\kappa((b \otimes n) e_j)=(b \otimes n)e_j$, so it is an isomorphism.

For the general case, write a free resolution $P^\bullet$ of $N_1$: 
\[ P^\bullet \to N_1 \to 0 = \cdots \to P^2 \to P^1 \to P^0 \to N_1 \to 0 .\]
Since $B$ is flat over $Z$, 
tensoring preserves cohomologies:
\[
B\otimes_Z\Ext^i_{R}( N_1,N_2) 
=B\otimes_Z H^i (\Hom_{R}(P^\bullet,N_2)) 
= H^i (B\otimes_Z \Hom_{R}(P^\bullet,N_2)). 
\]
Consider the chain map $\kappa^\bullet:B\otimes_Z \Hom_{R}(P^\bullet,N_2) \to \Hom_{B \otimes_Z {R}} ( B \otimes_Z P^\bullet,B \otimes_Z N_2)$.
Each $P^i$ is free, so by Step 1 the maps $\kappa^i:B\otimes_Z \Hom_{R}(P^i,N_2) \to \Hom_{B \otimes_Z {R}} ( B \otimes_Z P^i,B \otimes_Z N_2) $ are isomorphisms and thus so is $\kappa^\bullet$.
Since $B\otimes_Z(-)$ is exact $B\otimes_Z P^\bullet$ is a free resolution of 
$B\otimes_Z N_1$. We get,
\begin{flalign*}
H^i (B\otimes_Z \Hom_{R}(P^\bullet,N_2))&\simeq H^i ( \Hom_{B\otimes_Z {R}}(B\otimes_Z P^\bullet,B\otimes_Z N_2))\\
&=\Ext^i_{B\otimes_Z {R}}(B\otimes_Z N_1,B\otimes_Z N_2).
\end{flalign*}
In particular, setting $i=0$ we see that $\kappa^0$ induces the desired isomorphism $\kappa$.
\end{proof}
\begin{proof}[Proof of Proposition \ref{prop:commifftensoriscomm}]
By Theorem \ref{thm:RamiEytanresults} the map $\mathrm{Spec} (B) \to \mathrm{Spec} (Z)$ 
is surjective and \'{e}tale, so it is faithfully flat. By Lemma \ref{lem:commifftensoriscomm part 1} we have
$\End_{R_\rho} (N)$ is commutative $\iff$ $B \otimes_{Z_\rho} \End_{ {R_\rho}} ( N)$ is commutative.
Using Proposition \ref{prop:commifftensoriscomm part 2} we get $B \otimes_{Z_\rho} \End_{ {R_\rho}}  N \simeq  \End_{ B \otimes_{Z_\rho}{R_\rho}}  (B \otimes_{Z_\rho}N)$ and the claim follows.
\end{proof}

For the proofs of the next two statements 
set $M=\ind_H^G \chi^{-1}\delta$.  
\begin{prop} \label{prop:commifftensorcomm}
$\mathrm{End}_{R_\rho}(H_{\rho,\ind_H^G \chi^{-1} \delta})$ is commutative $\iff$  $\End_B(H_{\rho,\ind_H^G \chi^{-1} \delta}|_B)$ is commutative.
\end{prop}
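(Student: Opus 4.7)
The plan is to base-change everything to $B$ along the étale, faithfully flat map $Z_\rho \hookrightarrow B$, use the explicit structure $B \otimes_{Z_\rho} R_\rho \simeq M_n(B)$ with $B \otimes_{Z_\rho} B$ sitting inside as the diagonal subalgebra (Theorem \ref{thm:RamiEytanresults}(1)), and then finish with a Morita-theoretic ring computation.

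Set $P := B \otimes_{Z_\rho} H_{\rho, \ind_H^G \chi^{-1}\delta}$. Proposition \ref{prop:commifftensoriscomm} applied directly gives that $\End_{R_\rho}(H_{\rho, \ind_H^G \chi^{-1}\delta})$ is commutative if and only if $\End_{M_n(B)}(P)$ is commutative. For the other side I would apply the same strategy (Lemma \ref{lem:commifftensoriscomm part 1} together with Proposition \ref{prop:commifftensoriscomm part 2}(1)), now taking $R = B$ as a $Z_\rho$-algebra: $\End_B(H_{\rho, \ind_H^G \chi^{-1}\delta}|_B)$ is commutative if and only if $\End_{B \otimes_{Z_\rho} B}(P)$ is commutative, where $B \otimes_{Z_\rho} B \simeq B^n$ acts on $P$ through its identification with $\mathrm{diag}(B) \subset M_n(B)$.

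It then remains to establish a purely algebraic statement: for a fixed $M_n(B)$-module $P$, commutativity of $\End_{M_n(B)}(P)$ is equivalent to commutativity of $\End_{\mathrm{diag}(B)}(P)$. Using the orthogonal matrix idempotents $e_{11}, \ldots, e_{nn}$ (which span $\mathrm{diag}(B)$ over $B$), the $\mathrm{diag}(B)$-module $P$ splits as $P = \bigoplus_{i=1}^n e_{ii}P$, whence $\End_{\mathrm{diag}(B)}(P) \simeq \prod_{i=1}^n \End_B(e_{ii}P)$. Each summand $e_{ii}P$ has an unambiguous $B$-module structure since the scalar action of $B \cdot \mathrm{Id}$ and the $i$-th diagonal action of $B \cdot e_{ii}$ coincide on $e_{ii}P$. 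Meanwhile, the Morita equivalence of $M_n(B)$ with $B$ (via the center) gives $\End_{M_n(B)}(P) \simeq \End_B(e_{11}P)$. The matrix units $e_{1i}$ and $e_{i1}$ provide mutually inverse $B$-linear isomorphisms between the summands $e_{ii}P$ and $e_{11}P$, because they commute with the central scalar $B \cdot \mathrm{Id}$. Hence $\End_B(e_{ii}P) \simeq \End_B(e_{11}P)$ as rings for every $i$, so $\End_{\mathrm{diag}(B)}(P) \simeq \End_B(e_{11}P)^n$. This product is commutative precisely when $\End_B(e_{11}P) \simeq \End_{M_n(B)}(P)$ is, which closes the equivalence.

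The main obstacle I foresee is verifying the finite-presentation hypothesis of Proposition \ref{prop:commifftensoriscomm part 2}(1) in the application with $R = B$, since $\ind_H^G \chi^{-1}\delta$ is typically not finitely generated as a $G$-representation. I would handle this inside the cuspidal block by invoking Theorem \ref{thm:locally free sheaf}(2): under the standing hypothesis $d_{H,\chi}(\rho) < \infty$, that result supplies local freeness of $H_{\rho, \ind_H^G \chi^{-1}\delta}|_B$ over a smooth subvariety of $\mathfrak{X}_G$, which should yield the coherence needed to apply the Hom-tensor comparison.
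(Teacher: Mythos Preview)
Your argument is correct and follows essentially the same route as the paper: base-change along the faithfully flat map $Z_\rho \hookrightarrow B$ via Proposition~\ref{prop:commifftensoriscomm}, identify the tensored triple with $B\cdot\mathrm{Id}\subset\mathrm{diag}(B)\subset M_n(B)$ via Theorem~\ref{thm:RamiEytanresults}(1), and use Morita equivalence to compare $\End_{M_n(B)}(P)$ with the factors of $\End_{\mathrm{diag}(B)}(P)\simeq\End_B(\tilde N)^n$. The only notable difference is that the paper dispatches the converse direction in one line, observing that $B\subset R_\rho$ already gives $\End_{R_\rho}(H_{\rho,M})\subseteq\End_B(H_{\rho,M}|_B)$, so commutativity of the latter immediately forces commutativity of the former; your symmetric ``iff'' argument via a second base-change works but is more than needed. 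Your concern about finite presentation is well-placed (the paper leaves it implicit here and only invokes finite generation of $H_{\rho,M}$ over $B$ later, via \cite[Appendix~B]{AGS15}, in the proof of Theorem~\ref{thm:commHeckeImpliesCGP}); since $B$ and $R_\rho$ are Noetherian this reduces to finite generation, which is indeed available under the standing $F$-sphericity and $d_{H,\chi}(\rho)<\infty$ hypotheses.
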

\begin{proof}
Assume $\mathrm{End}_{R_\rho}(H_{\rho,M})$ is commutative. 
By Theorem \ref{thm:RamiEytanresults} we have,
\[
\left( B \otimes_{Z_\rho} Z_\rho \subseteq  B \otimes_{Z_\rho} B\subseteq  B \otimes_{Z_\rho} R_\rho\right)
\simeq
\left(B\cdot \mathrm{Id} \subseteq \mathrm{diag}(B) \subseteq \mathrm{M}_n(B)\right).
\]
Set $N = B\otimes_{Z_\rho} H_{\rho,M}$.
It is an $\mathrm{M}_n(B)$-module, and by Morita equivalence we have
$N\simeq \tilde N^n$ where $\tilde N$ is a $B$-module and $\End_B(\tilde N)\simeq \End_{\mathrm{M}_n(B)}(N)$.

Since by Proposition \ref{prop:commifftensoriscomm} the algebra $\mathrm{End}_{B\otimes_{Z_\rho} R_\rho}(N)$ is commutative, we conclude that the algebra $\End_{B\otimes_{Z_\rho} B}(B\otimes_{Z_\rho} H_{\rho,M})$ is commutative as well:
\[
\End_{B\otimes_{Z_\rho} B}(B\otimes_{Z_\rho} H_{\rho,M})=\End_{B^n}(N)=
\End_{B^n}(\tilde{N}^n)\simeq(\End_B(\tilde N))^n.
\]
By applying  Proposition \ref{prop:commifftensoriscomm} once again, we see that $\End_{B}(H_{\rho,M}|_B)$
is commutative.

The converse is clear since $B \subset R_\rho$ implies $\End_{R_\rho}(H_{\rho,M})\subseteq \End_{B}(H_{\rho,M}|_B)$.
\end{proof}

Recall $(G,H):=({\bf G}(F),{\bf H}(F))$ is an $F$-spherical pair if
 $|H\backslash G/P|$ is finite for every $P={\bf P}(F)$ where ${\bf P}$ is a parabolic of ${\bf G}$.
Theorem \ref{thm:MainTheorem} now clearly follows from Theorem \ref{thm:commHeckeImpliesCGP} by ranging over all irreducible cuspidal representations $\rho$.
\begin{thm} \label{thm:commHeckeImpliesCGP}
Let $(G,H)$ be an $F$-spherical pair, assume 
$d_{H,\chi\delta}(\rho) < \infty$ and denote by $(\H_\chi)_\rho:=\End_{R_\rho}(H_{\rho,\ind_H^G \chi^{-1} \delta})$ the projection of $\H_\chi$ to the block $\mathcal{B}_\rho$.
Then 
\[(\H_\chi)_\rho\text{ is commutative }\iff
d_{H,\chi\delta}(\rho') \leq 1\text{ for every irreducible representation }\rho' \in \mathcal{B}_\rho.\]
\end{thm}
\begin{proof} 
Assume $H_{\rho,M}\neq 0$, as otherwise we are done.
Using Theorem \ref{thm:locally free sheaf}(2) we have $H_{\rho,M}=i_*(\mathcal{F})$ where $i: \frak{X}' \subset \frak{X}_G$ is smooth and $\mathcal{F}$ is locally free. 
Since $\mathcal{F}$ is locally free, $\supp(\mathcal{F})$ is a union of irreducibility components of $\frak{X}'$, so we can assume $\frak{X}'$ is the vanishing set of $I=\mathrm{Ann}_{B}(H_{\rho,M})$, the annihilator of $H_{\rho,M}$ in $B$.

We get that $H_{\rho,M}$ is locally free as a $B/I$-module,
so  there exist generators $f_1, \ldots, f_n \in B/I$ such that $f_i^{-1}H_{\rho,M}\simeq (f_i^{-1}(B/I))^{k_i}$ for some $k_i \in \nats$ and $\bigcup\limits_{i=1}^n D(f_i)=\mathrm{Spec}(B/I)$.

If $\End_{R_\rho}(H_{\rho,M})$ is commutative, then by Proposition \ref{prop:commifftensorcomm} so is $\End_{B/I}(H_{\rho,M})$.
By Corollary B.4.6 of \cite[Appendix B]{AGS15} (see also \cite{AAG12}), $H_{\rho,M}$ is finitely generated over $B$,
so it is a finitely presented $B/I$ module ($B/I$ is Noetherian).
Now, Proposition \ref{prop:commifftensoriscomm part 2}(2) implies 
$\End_{f_i^{-1}B/I}(f_i^{-1}H_{\rho,M}) \simeq f_i^{-1}\End_{B/I}(H_{\rho,M})$ is commutative.
We conclude that for each $i$ we have $f_i^{-1}H_{\rho,M}\simeq f_i^{-1}(B/I)$.

By Theorem \ref{thm:locally free sheaf}(1), $ \Hom_{G}(M,\rho\otimes \psi) \simeq \Hom_{\mathcal{O}_{\frak{X}_G}}(H_{\rho,M},\delta_{\psi})$, where $\delta_\psi$ is the skyscraper sheaf at $\psi \in\frak{X}_G$ with ring $\mathcal{O}_{\frak{X}_G,\psi}/\frak{m}_\psi$. 
Let $(H_{\rho,M})_\psi$ be the stalk of $H_{\rho,M}$ at $\psi$. We have,
\[
\Hom_{\mathcal{O}_{\frak{X}_G}}(H_{\rho,M},\delta_{\psi})
\simeq \Hom_{\mathcal{O}_{\frak{X}_G,\psi}}((H_{\rho,M})_\psi,\mathcal{O}_{\frak{X}_G,\psi}/\frak{m}_\psi).
\]
Since $H_{\rho,M}$ is locally {free} of rank $1$ over $\frak{X}'$ (and vanishes over the complement of  $\frak{X}'$),
 in particular $(H_{\rho,M})_\psi\simeq (\mathcal{O}_{\frak{X}_G,\psi}/\frak{m}_\psi)^k$ where $k \leq 1$. We get $d_{H,\chi\delta}(\rho\otimes \psi)= \dim_\complex \Hom_{G}(M,\rho\otimes \psi) \leq 1$:
\begin{flalign}
\dim_\complex \Hom_{\mathcal{O}_{\frak{X}_G}}(H_{\rho,M},\delta_{\psi})
&=\dim_\complex\Hom_{\mathcal{O}_{\frak{X}_G,\psi}}((H_{\rho,M})_\psi,\mathcal{O}_{\frak{X}_G,\psi}/\frak{m}_\psi) \notag
=\dim_\complex\Hom_{\mathcal{O}_{\frak{X}_G,\psi}}(\mathcal{O}_{\frak{X}_G,\psi}^k,\mathcal{O}_{\frak{X}_G,\psi}/\frak{m}_\psi)\leq 1.
\end{flalign}
Since every irreducible $\rho'  \in \mathcal{B}_\rho$ is of the form $\rho' \simeq \rho \otimes \psi$ for some $\psi \in \frak{X}_G$, we are done.

Conversely, we get the following for every $\psi \in \frak{X}_G$:
\[
\dim_\complex\Hom_{\mathcal{O}_{\frak{X}_G,\psi}}((H_{\rho,M})_\psi,\mathcal{O}_{\frak{X}_G,\psi}/\frak{m}_\psi)
=\dim_\complex \Hom_{\mathcal{O}_{\frak{X}_G}}(H_{\rho,M},\delta_{\psi}) =d_{H,\chi\delta}(\rho \otimes \psi)\leq 1.
\]
Recalling the locally freeness of $H_{\rho,M}$ over $\frak{X'}$, the rank of every $f_i^{-1}H_{\rho,M}$ is at most $1$.
It follows every
$f_i^{-1}\End_{B/I}(H_{\rho,M})
\simeq \End_{f_i^{-1}(B/I)}(f_i^{-1}H_{\rho,M})$
is commutative.
 Since $\curly{f_i}_{i=1}^n$ generate $B/I$, the map 
 \[
\End_{B/I}(H_{\rho,M}) \to f_1^{-1}\End_{B/I}(H_{\rho,M}) \times \ldots \times f_n^{-1}\End_{B/I}(H_{\rho,M})
 \]
is injective, and we get that $\End_{B/I}(H_{\rho,M})$ is commutative. Proposition \ref{prop:commifftensorcomm} finishes the proof.
\end{proof}
\section{A tempered $\chi$-Gelfand property implies $\H_\chi$ is commutative}
Let $H \leq G$ be l.c.t.d groups as in Section \ref{sec:conventions}.
Throughout this section, we assume $G$ is second countable of type I (see Section \ref{sec:Direct integral decompositions of unitary representations}) and that $\chi$ is a unitary character.
Let $W(\chi):=L^2(H \backslash G,\chi^{-1}\delta)$ be the completion of the unitarizable representation $\ind_H^G \chi^{-1} \delta$,
%
and recall $\rho \in \Irr{G}$ is said to be $(H\backslash G,\chi^{-1})$-{tempered} if it is included in the support of the Plancherel measure  of $W(\chi)$ (see Section \ref{sec:Direct integral decompositions of unitary representations}).
\begin{defn}
We say that $(G,H)$ is an $(H\backslash G,\chi^{-1})$-{tempered} $\chi$-Gelfand pair if 
$d_{H,\chi\delta} \leq 1$ for every $(H\backslash G,\chi^{-1})$-{tempered} $\rho \in \Irr{G}$.
\end{defn}

In this section we prove the following theorem.
\begin{thm}\label{thm:GPimpliesCom}
Let $(G,H)$ be an $(H\backslash G,\chi^{-1})$-tempered $\chi$-Gelfand pair, then $\mathcal{H}_\chi$ is commutative.
\end{thm}

Given the Hecke algebra $A(G)$ of locally constant, compactly supported measures on $G$, a standard result (\cite[Separation Lemma]{BR})
 asserts that for any $0 \neq a \in A(G)$ there exists $\rho\in \mathrm{Irr}(G)$ such that $\rho(h)\neq 0$.
One then says that $A(G)$ is separated by the set of its irreducible representations.
We now formulate and prove an analogous statement for $\ind_H^G \chi^{-1}\delta$ from
which, after an additional step, Theorem \ref{thm:GPimpliesCom} will follow.
\begin{defn}
Let $M$ be a $G$-representation, and let $\mathrm{rad}(M)$ be the intersection of its maximal sub-representations.
\begin{enumerate}
\item We say that $M$ is  semi-primitive or separated if $\mathrm{rad}(M) =  \curly{0}$.
\item We say that $M$ is separated by a set $\curly{(M_\gamma,p_\gamma: M \to M_\gamma)}_{\gamma \in I}$ 
if every $M_\gamma$ is an irreducible $G$-representation and for every $m \in M$ there exists $\gamma \in I$
such $p_\gamma(m) \neq 0$.
\end{enumerate}
\end{defn}
Let $\nu_{W(\chi)}$ be the Plancherel measure of $W(\chi):=L^2(H \backslash G,\chi^{-1}\delta)$, denote by $\rho^{\mathrm{sm}}$ the smooth vectors of $\rho$ 
and set $S_{\nu_{W(\chi)}}:=\curly{\rho^{\mathrm{sm}}_\gamma: \exists \alpha_\gamma:L^2(H \backslash G,\chi^{-1}\delta)\to\rho_\gamma \text{ and } (\rho_\gamma,\alpha_\gamma)\in \supp(\nu_{W(\chi)})}.$
\begin{prop} \label{prop:indisseparated}
The module $\ind_H^G \chi^{-1}\delta$ is separated by the set
$\curly{(\rho^{\mathrm{sm}}_\gamma,\alpha_\gamma)}_{(\rho_\gamma,\alpha_\gamma)\in \supp(\nu_{W(\chi)})}$.
\end{prop}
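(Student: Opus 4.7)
My plan is to combine injectivity of the natural embedding $\alpha: \ind_H^G \chi^{-1}\delta \to W(\chi)$ with its pointwise-defined structure (Proposition \ref{prop:Bernstein separation}) and the central decomposition of $W(\chi)$ as a unitary $G$-representation (Theorem \ref{thm:central decomposition}).

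First I would verify that $\alpha$ is injective. A nonzero $f \in \ind_H^G \chi^{-1}\delta$ is locally constant and of compact support modulo $H$; since $\chi$ is unitary, $|f|^2$ descends to a non-negative locally constant function on $H\backslash G$, strictly positive on a nonempty open set contained in $\supp(f)/H$, and hence $\|\alpha(f)\|_{L^2}^2 > 0$.

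Next, by Theorem \ref{thm:central decomposition} I would disintegrate
\[
W(\chi) \simeq \int^\oplus_{\widehat G} m_{W(\chi)}(\pi)\pi\, d\nu_{W(\chi)}(\pi),
\]
and invoke Proposition \ref{prop:Bernstein separation} to obtain a family of $G$-equivariant pointwise maps $\alpha_\pi:\ind_H^G \chi^{-1}\delta \to m_{W(\chi)}(\pi)\pi$, defined for $\nu_{W(\chi)}$-almost every $\pi$, satisfying $\alpha(f)(\pi)=\alpha_\pi(f)$ almost everywhere. Given $0\neq f$, the non-vanishing of $\alpha(f)\in W(\chi)$ forces $\alpha_\pi(f)\neq 0$ on a positive-measure subset, which by definition meets $\supp(\nu_{W(\chi)})$. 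Picking any such $\pi$ and post-composing $\alpha_\pi$ with an orthogonal projection of $m_{W(\chi)}(\pi)\pi$ onto a single irreducible summand on which $\alpha_\pi(f)$ is nonzero produces a $G$-equivariant map $\alpha_\gamma: W(\chi) \to \rho_\gamma$ with $\alpha_\gamma\circ\alpha(f)\neq 0$. Smoothness of $f$ together with $G$-equivariance of $\alpha_\gamma\circ\alpha$ places the resulting vector inside $\rho^{\mathrm{sm}}_\gamma$, yielding the required morphism $p_\gamma:\ind_H^G \chi^{-1}\delta \to \rho^{\mathrm{sm}}_\gamma$ in the separating set.

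The main obstacle I anticipate is the passage from the fiber map $\alpha_\pi$, which a priori lands in the whole multiplicity space $m_{W(\chi)}(\pi)\pi$, to a map into a single irreducible constituent $\rho_\gamma$. Since the conclusion only requires existence of a nonzero projection at one specific $\pi$ rather than a measurable family over $\widehat G$, this reduces to an elementary Hilbert-space statement inside a single fiber and bypasses the measurable-section pathologies that would otherwise arise; a secondary, routine point is that a $G$-equivariant continuous linear map sends the smooth vectors of $\ind_H^G\chi^{-1}\delta$ into $\rho_\gamma^{\mathrm{sm}}$, which is automatic.
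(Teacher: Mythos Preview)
Your proposal is correct and follows essentially the same route as the paper: invoke the central decomposition of $W(\chi)$, use Proposition~\ref{prop:Bernstein separation} to obtain pointwise fibre maps $\alpha_\gamma$, and conclude from injectivity of $\alpha$ that for any nonzero $f$ the set of $\gamma$ with $\alpha_\gamma(f)\neq 0$ has positive $\nu_{W(\chi)}$-measure. The paper's proof is terser and does not spell out the injectivity of $\alpha$, the passage from the multiplicity space $m_{W(\chi)}(\pi)\pi$ to a single irreducible copy, or the landing in smooth vectors; your additions on these points are sound and fill in details the paper leaves implicit. One minor slip: you write that the projected map $\alpha_\gamma$ has domain $W(\chi)$, but the pointwise maps are only defined on $\ind_H^G\chi^{-1}\delta$, which is all that is needed.
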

\begin{proof}
{Consider the central decomposition of $W(\chi)$
\[W(\chi):=L^2(H \backslash G,\chi^{-1}\delta)= \int^\oplus_{\widehat{G}}m_{\nu_{W(\chi)}}(\rho) \rho d\nu_{W(\chi)}(\rho).\]
By Proposition \ref{prop:Bernstein separation}, the embedding $\alpha: \ind_H^G \chi^{-1}\delta \to W(\chi)$
is pointwise defined. 
This implies we have a family of $G$-maps $\curly{\alpha_\gamma : \ind_H^G \chi^{-1} \delta \to \rho_\gamma^{\mathrm{sm}}}_{\gamma \in I} $ where $\rho_\gamma \in \widehat{G}$,
such that for every $f \in \ind_H^G \chi^{-1}\delta$ we have $\alpha(f)(\gamma) = \alpha_\gamma(f)$.
In particular $\curly{(\rho_\gamma,\alpha_\gamma): \alpha_\gamma(f) \neq 0}$ has positive measure for every $f \in \ind_H^G \chi^{-1}\delta$. }
\end{proof}

The following is the final ingredient needed to prove Theorem \ref{thm:GPimpliesCom}:

\begin{prop} \label{prop:HeckeInjMap}
The map 
$\phi_\chi : \mathcal{H}_\chi \to \prod\limits_{\rho \in S_{\nu_{W(\chi)}}} 
\End_\complex( \Hom_G(\ind_H^G \chi^{-1}\delta,\rho))$ 
via 
\[\varphi \mapsto \phi_\chi(\varphi)\left((\alpha_\rho)_{\rho \in S_{\nu_{W(\chi)}}}\right)
=(\alpha_\rho \circ \varphi)_{\rho \in S_{\nu_{W(\chi)}}}\] 
 is an injective map of rings.
\end{prop}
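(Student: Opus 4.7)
The plan is to verify in order that $\phi_\chi$ is well defined, that it respects the ring structure, and finally that it is injective, with the last point being the substantive content; everything else is formal bookkeeping.

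For well-definedness, note that if $\varphi \in \H_\chi = \End_G(\ind_H^G \chi^{-1}\delta)$ and $\alpha \in \Hom_G(\ind_H^G \chi^{-1}\delta, \rho)$, then $\alpha \circ \varphi$ is again a $G$-morphism, so precomposition with $\varphi$ indeed defines a $\complex$-linear endomorphism of $\Hom_G(\ind_H^G \chi^{-1}\delta, \rho)$. Doing this coordinate by coordinate gives a well-defined element of the product. Additivity of $\phi_\chi$ is immediate. For the multiplicative structure, associativity of composition yields
\[
\phi_\chi(\varphi_1 \circ \varphi_2)(\alpha) = \alpha \circ \varphi_1 \circ \varphi_2 = \phi_\chi(\varphi_2)\bigl(\phi_\chi(\varphi_1)(\alpha)\bigr),
\]
so $\phi_\chi$ is a ring homomorphism into the opposite product ring (equivalently, an anti-homomorphism); this is harmless since commutativity is preserved under passing to the opposite ring, which is all that will be needed to deduce Theorem \ref{thm:GPimpliesCom}.

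For injectivity, which is the heart of the statement, I would argue by contrapositive and invoke Proposition \ref{prop:indisseparated}. Suppose $\varphi \in \H_\chi$ is nonzero, and pick $f \in \ind_H^G \chi^{-1}\delta$ with $\varphi(f) \neq 0$. Proposition \ref{prop:indisseparated} asserts that $\ind_H^G \chi^{-1}\delta$ is separated by the family $\{(\rho_\gamma^{\mathrm{sm}}, \alpha_\gamma)\}_{\gamma}$ coming from the pointwise defined embedding $\alpha \colon \ind_H^G \chi^{-1}\delta \hookrightarrow W(\chi)$, so there exists some $\rho \in S_{\nu_{W(\chi)}}$ together with $\alpha_\rho \in \Hom_G(\ind_H^G \chi^{-1}\delta, \rho)$ satisfying $\alpha_\rho(\varphi(f)) \neq 0$. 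But then $(\alpha_\rho \circ \varphi)(f) \neq 0$, so the $\rho$-component of $\phi_\chi(\varphi)$ does not annihilate $\alpha_\rho$, hence $\phi_\chi(\varphi) \neq 0$.

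The only conceptual step is the appeal to Proposition \ref{prop:indisseparated}, whose proof already accounts for the delicate analytic input (the pointwise-defined embedding into $L^2(H\backslash G, \chi^{-1}\delta)$ from Proposition \ref{prop:Bernstein separation} and the central decomposition of Theorem \ref{thm:central decomposition}); given that, the present proposition reduces to the purely categorical observation that separation of a module by maps to irreducibles makes the precomposition map into the product of endomorphism rings of the resulting Hom-spaces faithful. I expect no real obstacle beyond carefully tracking which side composition acts on in the ring structure.
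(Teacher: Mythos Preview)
Your argument is correct and follows essentially the same route as the paper: nonzero $\varphi$ gives $f$ with $\varphi(f)\neq 0$, and Proposition~\ref{prop:indisseparated} supplies an $\alpha_\rho$ with $\alpha_\rho(\varphi(f))\neq 0$. Your observation that precomposition makes $\phi_\chi$ an anti-homomorphism rather than a homomorphism is a point the paper glosses over with ``clearly a map of rings''; as you note, this is immaterial for deducing commutativity of $\H_\chi$.
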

\begin{proof} 
This is clearly a map of rings. 
Given a non-zero $\varphi \in \mathcal{H}_\chi$, there exists $f$ such that $\varphi(f) \neq 0$, and by the previous proposition there exist
an irreducible representation $\rho_\gamma \in S_{\nu_{W(\chi)}}$ and $\alpha_\gamma:\ind_H^G \chi^{-1} \delta \twoheadrightarrow \rho_\gamma$ such that 
$\alpha_\gamma(\varphi(f)) \neq 0$, as required.
\end{proof}
Theorem \ref{thm:GPimpliesCom} now follows immediately:
\begin{proof}[ {Proof of Theorem \ref{thm:GPimpliesCom}}]
If $(G,H)$ is an $(H\backslash G,\chi^{-1})$-tempered $\chi$-Gelfand pair, then 
\[\prod\limits_{\rho \in S_{\nu_{W(\chi)}}} \End_\complex(\Hom_G(\ind_H^G \chi^{-1} \delta,\rho))\] 
is a product of one dimensional algebras. 
Since $\phi$ is injective, $\mathcal{H}_\chi$ must be commutative.
\end{proof}

\appendix
\section{A general proof of the Gelfand-Kazhdan criterion}
\label{app: general proof of GK}
In this appendix we prove Theorem \ref{prop:GK} in the case of a locally compact, totally disconnected, second countable topological group $G$ with respect to a character $\chi$ of $H \leq G$
where we do not assume either $G$ or $H$ are unimodular.
Let $A(G)$ be the algebra of smooth, compactly supported measures on $G$, and let $C^{-\infty}(G)$ be its dual space, the space of generalized functions on $G$.
\begin{thm} [Gelfand-Kazhdan criterion] 
Assume there exists an anti-involution $\sigma:G\to G$ such that $\sigma(H)=H$ and $\sigma(\xi)=\xi$ for every generalized function $\xi \in C^{-\infty}(G)^{(H,\delta_G\chi^{-1}) \times (H,\delta_G\chi^\sigma)}$.
Then we have
\[
\dim_\complex\Hom_H(\rho, \chi) \cdot \dim_\complex\Hom_H(\tilde \rho, {\chi^\mu}) \leq 1,
\]
for every smooth irreducible admissible representation $\rho$ of $G$, where $\mu(g)=\sigma(g)^{-1}$.
\end{thm} 
\begin{proof}
We essentially follow \cite[Proposition 4.2]{Gr91} (see also \cite[Lemma 4.2]{P90}), using Lemma \ref{lem:inducedinvolutiononGxG} to replace a few arguments.
 
 Let $\rho$ be an irreducible representation
of $G$ and let $l : \rho \to \complex_{\chi}$ and $m : \tilde{\rho} \to \complex_{\chi^\mu} $ be non-zero 
$H$-quasi-invariant linear functionals. 
Choose a right invariant Haar measure $d_r g$ on $G$.
Then the functionals above give surjective $G$-linear maps $F_l : A(G) \to \tilde{\rho}$
and $F_m : A(G) \to \tilde{\tilde{\rho}} \simeq \rho$ defined by
\[
F_l(f)=\int_G \angles{\rho^*(g)l,-}df(g)=\rho^*(f)(l) \in \tilde{\rho}, 
~\text{        }~ F_m(f)=\int_G \angles{\tilde{\rho}^{*}(g)m,-}df(g)=\tilde{\rho}^{*}(f)(m) \in \rho
\]
for $f \in A(G)$.
Since $\rho$ and $\tilde{\rho}$ are irreducible,
the linear maps $F_l$ and $F_m$ are determined (up to a scalar) by their kernels (note we use here Schur's lemma).
Composing with the $G$-invariant bilinear form $\angles{-,-} : \rho \times \tilde{\rho} \to \complex $
we obtain a linear map, $B(f_1,f_2)=\angles {F_m(f_1), F_l(f_2)}$
\[ B : A(G) \otimes A(G) \to \rho \otimes \tilde{\rho} \to \complex. \]

We now claim $B$ may be viewed as a generalized function on $G \times G$ which is left
equivariant under $(H,\delta_G\chi^\sigma) \times (H,\delta_G\chi^{-1})$ and right invariant under the diagonal action of $G$. We calculate,
\begin{flalign*}
\angles{L_{(h_1,h_2)}B, f_1 \otimes f_2}
=B(L_{h_1^{-1}}f_1,L_{h_2^{-1}}f_2)
= \angles{F_m(L_{h_1^{-1}}f_1),F_l(L_{h_2^{-1}}f_2)}.
\end{flalign*}
Now, recall that $m$ is $(H,\chi^\mu)$-equivariant,
\begin{flalign*}
F_m(L_{h_1^{-1}}f_1)(\tilde{v})
&=\int_G \angles{m,\tilde{\rho}(g)\tilde{v}}dL_{h_1^{-1}}f_1(g)
=\int_G \delta_G(h_1)\angles{m,\tilde{\rho}(h_1^{-1})\tilde{\rho}(g)\tilde{v}}df_1( g) \\
&=\delta_G(h_1)\int_G \chi^\mu(h_1^{-1})\angles{m,\tilde{\rho}(g)\tilde{v}}d f_1( g)
=\delta_G(h_1)\chi^\sigma(h_1)F_m(f_1)(\tilde{v}).
\end{flalign*}
Similarly, $F_l(L_{h_2^{-1}}f_2)=\delta_G(h_2)\chi^{-1}(h_2)F_l(f_2)$.
Since $F_m$ and $F_l$ are $G$-linear (recall our measure is $G$-invariant on the right), $F_m(R_{g'}f)=\rho(g')( F_m(f))$, and similarly for $F_l$.
Since our bilinear form is $G$-invariant, we get that $B$ is invariant with respect to right $\Delta G$ action:
\begin{flalign*}
\angles{R_{(g,g)}B, f_1 \otimes f_2}
= \angles{F_m(R_{g^{-1}}f_1),F_l(R_{g^{-1}}f_2)}
= \angles{\tilde{\rho}(g) F_m(f_1),\rho(g) F_l(f_2)}
= \angles{F_m(f_1), F_l(f_2)}.
\end{flalign*} 

Now, $B \in C^{-\infty}(G \times G)^{(H,\delta_G\chi^{-1}) \times (H,\delta_G\chi^\sigma)\times \Delta G}$,
 and by Lemma \ref{lem:inducedinvolutiononGxG} the isomorphism $C^{-\infty}(G) \simeq C^{-\infty}(G \times G)^{\Delta G}$ 
 induces the involution $(f_1 \otimes f_2)^\sigma = f_2^\mu \otimes f_1^\mu$ on $C^{-\infty}(G \times G)^{\Delta G}$.
By our assumptions $B$ is invariant under $\sigma$. In particular, 
\[
\angles{F_m(f_1),F_l(f_2)}=B(f_1, f_2) =B^\sigma(f_1,f_2)= B(f_2^\mu,f_1^\mu)=\angles{F_m(f_2^\mu),F_l(f_1^\mu)}
\]
for all $f_1, f_2\in A(G)$. 
This means $f_1 \in \ker F_m \iff f_1^\mu \in \ker F_l$ (we can fix $f_1$ and let $f_2$ vary), 
so $F_l$ determines the kernel of $F_m$, and since $\rho$ is irreducible it determines $F_m$ up to a scalar.
Since $l$ was arbitrary, we deduce that $\dim_\complex ( \tilde{\rho}, \chi^\mu) \leq 1$, and similarly
$\dim_\complex ( \rho, \chi) \leq 1$.
\end{proof}

\end{document}